\newlength\aftertitskip     \newlength\beforetitskip
\newlength\interauthorskip  \newlength\aftermaketitskip
\newcommand*{\mybox}[1]{\framebox{#1}}
\def\maketitle{\par
 \begingroup
   \def\thefootnote{\fnsymbol{footnote}}
   \def\@makefnmark{\hbox to 4pt{$^{\@thefnmark}$\hss}}
   \@maketitle \@thanks
 \endgroup
\setcounter{footnote}{0}
 \let\maketitle\relax \let\@maketitle\relax
 \gdef\@thanks{}\gdef\@author{}\gdef\@title{}\let\thanks\relax}
\def\@startauthor{\noindent \normalsize\bf}
\def\@endauthor{}
\def\@starteditor{\noindent \small {\bf Editor:~}}
\def\@endeditor{\normalsize}
\def\@maketitle{\vbox{\hsize\textwidth
 \linewidth\hsize \vskip \beforetitskip
 {\begin{center} \LARGE\@title \par \end{center}} \vskip \aftertitskip
 {\def\and{\unskip\enspace{\rm and}\enspace}%
  \def\addr{\small\it}%
  \def\email{\hfill\small\tt}%
  \def\name{\normalsize\bf}%
  \def\AND{\@endauthor\rm\hss \vskip \interauthorskip \@startauthor}
  \@startauthor \@author \@endauthor}
}}
\newcommand{\Mc}{\mathcal{M}}
\newcommand{\E}{\mathbb{E}}
\newcommand{\Xc}{\mathcal{X}}
\newcommand{\gfrak}{\mathfrak{g}}
\newcommand{\inv}[1]{#1^{-1}}
\newcommand{\nlsum}{\sum\nolimits}
\newcommand{\reals}{\mathbb{R}}
\newcommand{\posdef}{\mathbb{P}}
\newcommand{\pd}{\posdef}
\renewcommand{\H}{\mathbb{H}}
\newcommand{\ip}[2]{\langle {#1},\, {#2} \rangle}
\newcommand{\norm}[1]{\|{#1}\|}
\newcommand{\half}{\tfrac{1}{2}}
\newcommand{\set}[1]{\{ #1\}}
\DeclareMathOperator*{\argmin}{argmin}
\DeclareMathOperator{\sgn}{sgn}
\DeclareMathOperator{\trace}{tr}
\DeclareMathOperator{\Exp}{Exp}
\DeclareMathOperator{\grad}{grad}
\newcommand{\matlab}{\textsc{Matlab}\xspace}
\renewcommand{\H}{\mathbb{H}}
\newcommand{\fw}{\textsc{Fw}\xspace}
\newcommand{\rfw}{\textsc{Rfw}\xspace}
\newcommand{\srfw}{\textsc{Srfw}\xspace}
\newcommand{\svrfw}{\textsc{Svr-Rfw}\xspace}
\newcommand{\spiderfw}{\textsc{Spider-Rfw}\xspace}
\newcommand{\spider}{\textsc{Spider}\xspace}
\def\sint{\begingroup\textstyle\int\endgroup} 
\newtheorem{theorem}{Theorem}[section]
\newtheorem{lem}[theorem]{Lemma}
\newtheorem{cor}[theorem]{Corollary}
\theoremstyle{definition}
\newtheorem{defn}[theorem]{Definition}
\newtheorem{rmk}[theorem]{Remark}
\numberwithin{equation}{section}
\title{Projection-free nonconvex stochastic optimization \\ on Riemannian manifolds}
\author{\name Melanie Weber \email{mw25@math.princeton.edu}\\
  \addr{Princeton University}\\
  \name Suvrit Sra \email{suvrit@mit.edu}\\
  \addr{Laboratory for Information and Decision Systems, MIT}
}
\begin{document}
\maketitle

\begin{abstract}
We study stochastic projection-free methods for constrained optimization of smooth functions on Riemannian manifolds, i.e., with additional constraints beyond the parameter domain being a manifold. Specifically, we introduce stochastic Riemannian Frank-Wolfe methods for nonconvex and geodesically convex problems. We present algorithms for both purely stochastic optimization and finite-sum problems. For the latter, we develop variance-reduced methods, including a Riemannian adaptation of the recently proposed \spider technique. For all settings, we recover convergence rates that are comparable to the best-known rates for their Euclidean counterparts. Finally, we discuss applications to two classic tasks: The computation of the Karcher mean of positive definite matrices and Wasserstein barycenters for multivariate normal distributions. For both tasks, stochastic \fw methods yield state-of-the-art empirical performance.
\end{abstract}

\section{Introduction}
We study the following constrained (and possibly nonconvex) stochastic and finite-sum problems:
\begin{align}
  \label{eq:opt1}
  \min_{x \in \Xc \subset \mathcal{M}} \Phi(x) &:= \mathbb{E}_\xi[\phi (x,\xi)] = \sint \phi (x,\xi) dP(\xi),\\
  \label{eq:opt2}
 \min_{x \in \Xc \subset \mathcal{M}} \Phi(x) &:= \tfrac{1}{m}\nlsum_{i=1}^m\phi_i(x),
\end{align}
where $\Xc$ is compact and geodesically convex and $\mathcal{M}$ is a Riemannian manifold. Moreover, the component functions $\set{\phi_i}_{i=1}^m$ as well as $\Phi$ are (geodesically) Lipschitz-smooth, but may be nonconvex. These problems greatly generalize their Euclidean counterparts (where $\Mc\equiv \reals^d$), which themselves are of central importance in optimization and machine learning. 
In particular, finite-sum problems (Eq.~\ref{eq:opt2}) arise frequently in machine learning subroutines, such as Empirical Risk Minimization, Maximum likelihood estimation or the computation of M-estimators. 

There has been an increasing interest in solving Riemannian problems of the above form, albeit without constraints~\citep{bonnabel,rspider1,zhang16,zhang_colt16,kasai-mishra,kasai-mishra2,tripuraneni2018averaging}. 
This interest is driven by two key motivations: First, that the exploitation of Riemannian geometry can deliver algorithms that are computationally superior to standard nonlinear programming approaches~\citep{absil_book,udriste1994convex,zhang16,manopt}. Secondly, in many applications we encounter non-Euclidean data, such as graphs, strings, matrices, tensors; where using a forced Euclidean representation can be quite inefficient~\citep{sala,nickel2017poincare,aistats,zhang16,billera2001geometry,edelman1998geometry}. 
These motivations have driven the recent surge of interest in the adaption and generalization of machine learning models and algorithms to Riemannian manifolds.

We solve problem~(\ref{eq:opt1}) by introducing Riemannian stochastic Frank-Wolfe (\fw) algorithms. These methods are projection-free~\citep{fw}, a property that has driven much of the recent interest in them~\citep{jaggi2013revisiting}. In contrast to projection-based methods, the \fw update requires solving a ``linear" optimization problem that ensures feasibility while often being much faster than projection. \fw has been intensively studied in Euclidean spaces for both convex~\citep{julien15,jaggi2013revisiting} and nonconvex~\citep{LJ16} 
objectives. Furthermore, stochastic variants have been proposed~\citep{reddi1} that enable strong performance gains. As our experiments will show, our stochastic Riemannian \fw also delivers similarly strong performance gains on sample applications, outperforming the state-of-the-art.

\subsection{Summary of main contributions}
\begin{itemize}
\item We introduce three algorithms: (i) \srfw, a fully stochastic method that solves~(\ref{eq:opt1}); (ii) \svrfw, a semi-stochastic variance-reduced version for~(\ref{eq:opt2}); and (iii) \spiderfw, an improved variance-reduced variant that uses the recently proposed \spider technique for estimating the gradient.  All three algorithms generalize various stochastic gradient tools to the Riemannian setting. For all methods, we establish convergence rates to first-order stationary points that match the rates of their Euclidean counterparts. Under the stronger assumption of geodesically convex objectives, we recover global sublinear convergence rates.
\item In contrast to~\citep{weber-sra}, which consider Riemannian \fw, \textsc{Stochastic} \rfw does not require the computation of full gradients. Overcoming the need to compute the full gradient in each iteration greatly reduces the computational cost of each iteration as it removes a major bottleneck in \rfw. Moreover, \textsc{Stochastic} \rfw applies to problem~\ref{eq:opt1}, a crucial subroutine in many machine learning applications.
\item We present an application to the computation of Riemannian centroids (Karcher mean) for positive definite matrices. This task is a well-known benchmark for Riemannian optimization, and it arises, for instance, in statistical analysis, signal processing and computer vision.  Notably, a simpler version of it also arises in the computation of hyperbolic embeddings.
\item Furthermore, we present an application to the computation of Wasserstein barycenters for multivariate and \emph{matrix-variate} Gaussians. For the latter, we prove the somewhat surprising property that the Wasserstein distance between two matrix-variate Gaussians is Euclidean convex. This result may be of independent interest.
\end{itemize}
The proposed \textsc{Stochastic} \rfw methods deliver valuable improvements, both in theory and experiment. Table~\ref{tab:complexities} summarizes the complexity results for all variants in comparison with \rfw (Algorithm~\ref{alg.rfw}). For an analysis of \rfw's complexity, see~\citep[Theorem~3]{weber-sra}.
Our algorithms outperform state-of-the art batch methods such as Riemannian \textsc{LBFGS}~\citep{lbfgs} and \textsc{Zhang}'s majorization-minimization algorithm~\citep{zhang}. Moreover, we also observe performance gains over the deterministic \rfw, which itself is known to be competitive against a wide range of Riemannian optimization tools~\citep{weber-sra}. Importantly, our methods further outperform state-of-the-art stochastic Riemannian methods \textsc{RSG}~\citep{kasai-mishra} and \textsc{RSVRG}~\citep{sato2017riemannian,zhang16}. 

\subsection{Related work}
Riemannian optimization has recently witnessed a surge of interest~\citep{bonnabel,zhang_colt16,absil_rbfgs,boumal}. A comprehensive introduction to Riemannian optimization can be found in~\citep{absil_book}. The \textsc{Manopt} toolbox~\citep{manopt} implements many successful Riemannian optimization methods, serving as a benchmark.

The study of stochastic methods for Riemannian optimization has largely focused on projected-gradient methods. \citet{bonnabel} introduced the first Riemannian \textsc{SGD}; \citet{zhang_colt16} present a systematic study of first-order methods for geodesically convex problems, followed by a variance-reduced Riemannian \textsc{SVRG}~\citep{zhang16, sato2017riemannian} that also applies to  geodesically nonconvex functions.  \citet{kasai-mishra} study gradient descent variants, as well as a Riemannian \textsc{ADAM}~\citep{kasai-mishra2}. A caveat of these methods is that a potentially costly projection is needed to ensure convergence. Otherwise, the strong (and often unrealistic) assumption that their iterates remain in a compact set is required 
In contrast, \rfw~(Algorithm~\ref{alg.rfw}) generates feasible iterates directly and therefore avoids the need to compute projections.  This leads to a cleaner analysis and a more practical method in cases where the ``linear'' oracle is efficiently implementable~\citep{weber-sra}. We provide additional details on the comparison of projection-free and projection-based methods in section~\ref{sec:proj-free}.
Riemannian optimization has also been applied in the ML literature, including for the computation of hyperbolic embeddings~\citep{sala}, low-rank matrix and tensor factorization~\citep{vandereycken} and eigenvector based methods~\citep{journee,zhang16,tripuraneni2018averaging}.

\begin{algorithm}[ht]
  \caption{Riemannian Frank-Wolfe (\rfw)}
  \label{alg.rfw}
  \begin{algorithmic}[1] 
    \State Initialize $x_0 \in \Xc \subseteq \mathcal{M}$; assume access to the geodesic map $\gamma: [0,1] \rightarrow \mathcal{M}$
     \For {$k=0,1,\dots$}
        \State $z_k \gets \argmin_{z \in \Xc}\ \ip{{\rm grad} \; \phi(x_k)}{\Exp_{x_k}^{-1}(z)}$
        \State Let $\eta_k \gets \frac{2}{k+2}$
        \State $x_{k+1} \gets \gamma(\eta_k)$, where $\gamma(0)=x_k$ and $\gamma(1)=z_k$
     \EndFor
   \end{algorithmic}
\end{algorithm}
{\renewcommand{\arraystretch}{2}
\begin{table*}[t]
\begin{small}
\label{tab:complexities}
\noindent\makebox[\textwidth]{
    \begin{tabular}{lcccc}
      \toprule
      \thead{Algorithm} & \thead{\rfw} & \thead{\srfw} & \thead{\svrfw} & \thead{\spiderfw} \\
      \midrule
      \textsc{SFO/ IFO} & $O\left(	\frac{m}{ \epsilon^2	}\right)$ &$O\left(\frac{1}{\epsilon^4}\right)$ & $O\left(m + \frac{m^{2/3}}{\epsilon^2} \right)$ & $O \left(\frac{1}{\epsilon^3}\right)$ \\
      \textsc{RLO} & $O \left(	\frac{1}{\epsilon^2}	\right)$ & $O\left(\frac{1}{\epsilon^2}\right)$ & $O \left(\frac{1}{\epsilon^2}\right)$ & $O \left(\frac{1}{\epsilon^2}\right)$ \\
       \bottomrule
    \end{tabular}
    }
    \caption{Oracle complexities of our Stochastic Riemannian Frank-Wolfe methods versus \rfw~\citep{weber-sra} for nonconvex objectives. Note that we recover the best known rates of the Euclidean counterparts for each method.  We consider three different oracle models, which we will define below in section~\ref{sec:complexities}: \textbf{SFO/ IFO:} Stochastic first-order oracle (for stochastic objectives) and Incremental first-order oracle (for objectives with finite-sum form).  \textbf{LO:} Riemannian linear optimization oracle.} 
  \end{small}
  \end{table*}
}

\section{Background and Notation}
\vspace*{-5pt}
We start by recalling some basic background on Riemannian geometry and introduce necessary notation. For a comprehensive overview on Riemannian geometry, see, e.g., ~\citep{jost}.

\subsection{Riemannian manifolds} 
A \emph{manifold} $\mathcal{M}$ is a locally Euclidean space equipped with a differential structure. Its corresponding \emph{tangent spaces} $T_x \mathcal{M}$ consist of tangent vectors at points $x \in \mathcal{M}$. We define an \emph{exponential map} $\Exp: T_x \mathcal{M} \rightarrow \mathcal{M}$ as follows: Let $g_x \in T_x\mathcal{M}$; then $y=\Exp_x(g_x) \in \mathcal{M}$ with respect to a geodesic $\gamma: [0,1] \mapsto \mathcal{M}$ with $\gamma(0)= x$, $\gamma(1)=y$ and $\dot{\gamma}(0)=g_x$. We will also use the \emph{inverse} exponential map 
$\Exp^{-1}: \mathcal{M} \rightarrow T_x \mathcal{M}$
that defines a diffeomorphism from the neighborhood of $x \in \mathcal{M}$ onto the neighborhood of $0 \in T_x \mathcal{M}$ with $\Exp_x^{-1}(x)=0$. 

\emph{Riemannian manifolds} are smooth manifolds with an inner product $\gfrak_x(u,v)=\ip{u}{v}_x$ defined on $T_x \mathcal{M}$ for each $x\in \mathcal{M}$. The inner product gives rise to a norm $\| v \|_x := \sqrt{\gfrak_x(v,v)}$ for $v \in T_x \mathcal{M}$. We will further denote the geodesic distance of $x,y \in \mathcal{M}$ as $d(x,y)$. For comparing vectors of different tangent spaces, we use the following notion of \emph{parallel transport}: Let $x, y \in \mathcal{M}$, $x \neq y$. Then, the operator $\Gamma_x^y g_x$ maps $g_x \in T_x \mathcal{M}$  to the tangent space $T_y \mathcal{M}$ along a geodesic $\gamma$ with $\gamma(0)=x$ and $\gamma(1)=y$. Note that the inner product on the tangent spaces is preserved under this mapping.

\subsection{Gradients, smoothness and convexity} 
The \emph{Riemannian gradient} $\grad \; \phi(x)$ of a differentiable function $\phi : \mathcal{M} \rightarrow \mathbb{R}$ is defined as the unique vector in $T_x \mathcal{M}$ with directional derivative $D\phi(x)[v] = \ip{\grad \; \phi(x)}{v}_x$ for all $v \in T_x \mathcal{M}$. For our algorithms we further need a notion of smoothness: Let $\phi : \mathcal{M} \rightarrow \mathbb{R}$ be differentiable. We say that $\phi$ is \emph{$L$-smooth}, if
\begin{equation}
  \label{eq:L}
  \|\grad \; \phi(y)-\Gamma_x^y\grad \; \phi(x)\| \le L d(x,y), \; \forall\ x, y \in  \mathcal{M},
\end{equation}
or equivalently, if for all $x, y \in \Mc$, $\phi$ satisfies
\begin{equation} \label{eq:L2}
  \phi(y) \leq \phi(x) + \ip{\grad \; \phi(x)}{\Exp_x^{-1} (y)}_x + \tfrac{L}{2} d^2(x,y).
\end{equation}
Another important property is \emph{geodesic convexity} (short: g-convexity), which is defined as 
\begin{equation}\label{eq:g-convex}
\phi(y) \geq \phi (x) + \ip{\grad \; \phi(x)}{\Exp_x ^{-1}(y)}_x \; \forall x,y \in \mathcal{M} \; .
\end{equation}

\subsection{Projection-free vs.\ Projection-based methods.}\label{sec:proj-free}
Classic Riemannian optimization has focused mostly on projection-based methods, such as \emph{Riemannian Gradient Descent} (\textsc{RGD}) or \emph{Riemannian Steepest Descent} (\textsc{RSD})~\citep{absil_book}. A convergence analysis of such methods typically assumes the gradient to be Lipschitz.  However, the objectives typically considered in most optimization and machine learning tasks are not Lipschitz on the whole manifold. Hence, a compactness condition is required. Crucially, in projection-based methods, the retraction back onto the manifold is typically not guaranteed to land in this compact set. Therefore, additional work (e.g., a projection step) is needed to ensure that the update remains in the compact region where the gradient is Lipschitz. On the other hand, \fw methods bypass this issue, because their update is guaranteed to stay within the compact feasible region. Further, for descent based methods it can suffice to ensure boundedness of the initial level set, but crucially, stochastic methods are \emph{not} descent methods, and this argument does not apply. Finally, in some problems, the Riemannian ``linear" oracle can be much less expensive than computing a projection back onto the compact set. This is particularly significant for the applications highlighted in this paper, where the ``linear" oracle can even be solved in closed form.

\subsection{Oracle models}\label{sec:complexities}
We briefly review three oracle models, which are commonly used to understand the complexity of stochastic optimization algorithms. 
\begin{enumerate}
\item \emph{Stochastic First-order Oracle} (short: \emph{SFO}): Consider a stochastic function $\Phi(x):= \mathbb{E} \left[ \phi(x, \xi)	\right]$ with $\xi \sim \mathcal{P}$.  For an input $x \in \mathcal{M}$, the SFO returns $(\phi(x,\xi'), \nabla \phi(x,\xi'))$ for a sample $\xi'$  that is drawn i.i.d. from the distribution $\mathcal{P}$. For details, see~\citep{nemirovski}.
\item \emph{Incremental First-order Oracle} (short: \emph{IFO}):
Consider a finite sum $\Phi(x):=\frac{1}{m} \sum_{i} \phi_i (x)$.  For an input $(i,x)$, where $i \in [n]$ is a function index and $x \in \mathcal{M}$, the IFO returns $(\phi_i(x), \nabla \phi_i(x))$. For details, see~\citep{agarwal_bottou}.
\item \emph{Riemannian Linear Optimization Oracle} (short: \emph{RLO}): For a set of constraints $\Xc$, a point $x \in \Xc \subseteq \mathcal{M}$ and a direction $g \in T_x \mathcal{M}$, the RLO returns $\argmin_{z \in \Xc} \ip{g}{\Exp_x^{-1}(z)}$.
\end{enumerate}
Throughout the paper, we measure complexity as the number of SFO/ IFO and RLO calls made by the algorithm to obtain an $\epsilon$-accurate solution. 

\section{Algorithms}
In this section, we introduce three stochastic variants of \rfw and analyze their convergence. 
Here and in the following $x_k, x_{k+1}$ and $y$ are as specified in Algorithm~\ref{alg.srfw}, ~\ref{alg.svrfw} and~\ref{alg.spider} respectively.
We further make the following assumptions: (1) $\Phi$ is $L$-smooth; and (2) in the stochastic case, the norm of the stochastic gradient  is bounded as
\begin{align*}
\max_{\substack{x \in \Xc \\ \xi \in {\rm supp}(\mathcal{P})}} \norm{\grad \phi(x,\xi)} \leq C
\end{align*}
for some constant $C \geq 0$.

\subsection{Stochastic Riemannian Frank-Wolfe}
\begin{algorithm*}[t]
  \caption{Stochastic Riemannian Frank-Wolfe (\srfw)}
  \label{alg.srfw}
  \begin{algorithmic}[1] 
    \State Initialize $x_0 \in \Xc$,  assume access to the geodesic map $\gamma: [0,1] \rightarrow \mathcal{M}$. 
    \State Set number of iterations $K$ and minibatch sizes $\lbrace b_k \rbrace_{k=0}^{K-1}$.
     \For {$k=0,1,\dots K-1$}
        \State Sample i.i.d. $\lbrace \xi_{1} , ..., \xi_{b_k} \rbrace$ uniformly at random according to $\mathcal{P}$.
        \State $y_k \gets \argmin_{y \in \Xc}\ \ip{\frac{1}{b_k}\sum_{i=1}^{b_k} \grad \; \phi (x_k, \xi_i)}{\Exp_{x_k}^{-1}(y)}$
        \State Compute step size $\eta_k$ and set $x_{k+1} \gets \gamma(\eta_k)$, where $\gamma(0)=x_k$ and $\gamma(1)=y_k$.
        \State $x^k \gets x_k$
     \EndFor
     \State Output $\hat{x}$ chosen uniformly at random from $\lbrace x^k	\rbrace_{k=0}^{K-1}$.
   \end{algorithmic}
\end{algorithm*}
\noindent Our first method, \srfw (Algorithm~\ref{alg.srfw}), is a direct analog of stochastic Euclidean \fw. It has two key computational components: A stochastic gradient and a ``linear'' oracle. Specifically, it requires access to the \emph{stochastic} ``linear'' oracle
\begin{equation}
  \label{eq:1}
y_k \gets \argmin_{y \in \Xc}\ \ip{G(\xi, x_k)}{\Exp_{x_k}^{-1}(y)}\; ,
\end{equation}
where $G(\cdot,\cdot)$ is an unbiased estimator of the Riemannian gradient ($\mathbb{E}_\xi G(\xi,x) = {\rm grad} \; \Phi(x)$). In contrast to Euclidean \fw,  the oracle~\eqref{eq:1} involves solving a nonlinear, nonconvex optimization problem. Whenever this problem is efficiently solvable, we can benefit from the FW strategy. In~\citep{weber-sra}, we analyze two instances where Eq.~\ref{eq:1} can be solved in closed form, for positive definite matrices and for the special orthogonal group respectively. Our experiments below will provide two concrete examples for the case of positive definite matrices.

We consider a minibatch variant of the oracle~\eqref{eq:1}, namely
\begin{align*}
y_k \gets  \argmin_{y \in \Xc}\ \Bigl\langle\frac{1}{b_k}\nlsum_{i=1}^{b_k} \grad \; \phi (x_k, \xi_i), \Exp_{x_k}^{-1}(y)\Bigr\rangle,
\end{align*}
where $\xi_i \sim \mathcal{P}$ are drawn i.i.d., and thus the minibatch gradient is also unbiased. 
We first evaluate the goodness of this minibatch gradient approximation with the following (standard) lemma:\\
\begin{lem}[Goodness of stochastic gradient estimate]
\label{lem:RM}
Let $\Phi(x)=\mathbb{E}_{\bm \xi} \left[ \phi(x,\xi_i) \right]$ with random variables $\lbrace	\xi_i \rbrace_{i=1}^b = \bm{\xi} \sim \mathcal{P}$. Furthermore, let
$g (x) := \frac{1}{b} \sum_{i=1}^{b} \grad \; \phi(x, \xi_i)$ denote the gradient estimate from a batch $\bm{\xi}$. Assume that the norm of the gradient estimate is upper-bounded as 
$\max_{x \in \Xc, \xi \in {\rm supp}(\mathcal{P}) } \norm{\grad \phi(x,\xi)} \leq C$. Then, $\mathbb{E}_{\bm{\xi}} \left[ \norm{g (x) - \grad \; \Phi(x)}	\right] \leq \frac{C}{\sqrt{b}}$. 
\end{lem}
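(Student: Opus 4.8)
The plan is to reduce the claim to the textbook variance bound for an average of i.i.d.\ unbiased estimators. The key structural observation that makes the Riemannian setting no harder than the Euclidean one is that every gradient $\grad\,\phi(x,\xi_i)$ is evaluated at the \emph{same} base point $x$, hence all summands and the target $\grad\,\Phi(x)$ live in the single tangent space $T_x\mathcal{M}$. No parallel transport is therefore needed, and all the linear-algebraic manipulations take place inside one fixed inner-product space $(T_x\mathcal{M},\langle\cdot,\cdot\rangle_x)$.

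First I would pass from the expected norm to the expected squared norm by Jensen's inequality (concavity of the square root),
\[
  \mathbb{E}_{\bm\xi}\bigl[\norm{g(x)-\grad\,\Phi(x)}\bigr]\ \le\ \sqrt{\mathbb{E}_{\bm\xi}\bigl[\norm{g(x)-\grad\,\Phi(x)}^2\bigr]}.
\]
Next, using that the minibatch gradient is unbiased, $\mathbb{E}_{\bm\xi}[g(x)]=\grad\,\Phi(x)$ (a consequence of $\mathbb{E}_\xi G(\xi,x)=\grad\,\Phi(x)$), I would write the centered average as $g(x)-\grad\,\Phi(x)=\tfrac{1}{b}\sum_{i=1}^b\bigl(\grad\,\phi(x,\xi_i)-\grad\,\Phi(x)\bigr)$ and expand the squared norm in $T_x\mathcal{M}$. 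Since the $\xi_i$ are i.i.d.\ and each summand is mean-zero, every cross term $\mathbb{E}[\langle \grad\,\phi(x,\xi_i)-\grad\,\Phi(x),\,\grad\,\phi(x,\xi_j)-\grad\,\Phi(x)\rangle_x]$ with $i\neq j$ vanishes, leaving
\[
  \mathbb{E}_{\bm\xi}\bigl[\norm{g(x)-\grad\,\Phi(x)}^2\bigr]=\frac{1}{b^2}\sum_{i=1}^b\mathbb{E}\bigl[\norm{\grad\,\phi(x,\xi_i)-\grad\,\Phi(x)}^2\bigr].
\]
Finally I would bound each per-sample variance by the raw second moment, $\mathbb{E}[\norm{\grad\,\phi(x,\xi_i)-\grad\,\Phi(x)}^2]\le \mathbb{E}[\norm{\grad\,\phi(x,\xi_i)}^2]\le C^2$, invoking the uniform bound $\norm{\grad\,\phi(x,\xi)}\le C$. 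Summing the $b$ identical terms gives $C^2/b$, and taking the square root yields the claimed $C/\sqrt{b}$.

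The argument is essentially routine, so there is no serious obstacle; the only point meriting care is the vanishing of the cross terms. This relies on independence of the $\xi_i$ together with the observation that, because every gradient is taken at the fixed point $x$, the form $\langle\cdot,\cdot\rangle_x$ is a genuine bilinear inner product on a single vector space, so the Euclidean bias--variance orthogonality transfers verbatim. A secondary subtlety is the interchange of gradient and expectation, $\grad\,\mathbb{E}_\xi[\phi(\cdot,\xi)]=\mathbb{E}_\xi[\grad\,\phi(\cdot,\xi)]$, underlying unbiasedness; I take this as given from the standing assumption that $G$ is an unbiased estimator of $\grad\,\Phi$.
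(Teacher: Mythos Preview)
Your proposal is correct and follows essentially the same approach as the paper: bound the expected norm via Jensen by the root of the expected squared norm, decompose the variance of the i.i.d.\ average using that cross terms vanish for mean-zero summands, bound each per-sample variance by the raw second moment $C^2$, and take the square root. The only cosmetic difference is ordering: the paper first bounds the variance of $g(x)$ by its second moment $\mathbb{E}\|g(x)\|^2$ and then decomposes, whereas you decompose the centered average first and then bound per term; your ordering is arguably cleaner since the mean-zero identity applies directly to the centered summands.
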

\noindent In the following, we drop the subscript $\bm{\xi}$ from the expectation for ease of notation, whenever its meaning is clear from context.

For the proof, recall the following fact, which we will use throughout the paper:
\begin{rmk}\normalfont
\label{eq:xi-sum-squares}
For a set of $n$ independent random variables $\lbrace \nu_i \rbrace_{1 \leq i \leq n}$ with mean zero, we have
\begin{equation}
\mathbb{E} \left[\norm{\nu_1 + \dots + \nu_n }^2	\right] = \mathbb{E} \left[	\norm{\nu_1}^2 + \dots + \norm{\nu_n}^2	\right] \; .
\end{equation}
\end{rmk}
\begin{proof}
We have
\begin{align*}
\mathbb{E} \left[\norm{	g(x) - \underbrace{\grad \; \Phi(x)}_{=\mathbb{E}\left[	g(x)	\right]}}^2	\right] 
&= \mathbb{E} \left[\norm{	g(x)}^2 \right] - \underbrace{\norm{\mathbb{E} \left[	g(x)\right]}^2}_{\geq 0}
\leq \mathbb{E} \left[ \norm{ g(x)}^2	\right] \\
&= \mathbb{E} \left[ \norm{  \frac{1}{b} \sum_{i=1}^b \grad \phi(x, \xi_i)	}^2	\right]
\overset{(1)}{\leq} \frac{1}{b^2} \mathbb{E} \left[	\sum_{i=1}^b \underbrace{\norm{\grad \phi (x,\xi_i)}^2}_{\leq C^2}	\right] \overset{(2)}{\leq} \frac{C^2}{b} \; ,
\end{align*}
where (1) follows from Remark~\ref{eq:xi-sum-squares} and the fact that $\mathbb{E}  \left[	g(x) - \grad \Phi(x)	\right]=0$, since $g(x)$ is assumed to be an unbiased gradient estimate.  (2) from the assumption that the norm of the gradient is upper-bounded by $C$. Furthermore, with Jensen's inequality:
\begin{align*}
\mathbb{E}\left[	\norm{g(x) - \grad \; \Phi(x)}^2	\right] \geq \left[\mathbb{E} \left(	\norm{g(x) - \grad \; \Phi(x)}	\right)	\right]^2 \; .
\end{align*}
Putting both together and taking the square root on both sides gives the desired claim:
\begin{align*}
\mathbb{E} \left[	\norm{g(x) - \grad \; \Phi(x)}	\right] \leq \frac{C}{\sqrt{b}} \; .
\end{align*}
\end{proof}

\noindent With this characterization of the approximation error, we can perform a convergence analysis for both nonconvex and g-convex objectives. To evaluate convergence rates, consider the following criterion (\emph{Frank-Wolfe gap}):
\begin{align}\label{def:FW-gap}
\mathcal{G}(x) = \max_{y \in \mathcal{X}} \ip{\Exp_{x}^{-1}(y)}{-\grad \Phi(x)} \; .
\end{align}
\noindent A similar criterion is used in theoretical analysis of Euclidean Frank-Wolfe methods (see, e.g.,~\citet{reddi1}). 
We define the \emph{Stochastic Frank-Wolfe gap} as
\begin{align*}
\hat{\mathcal{G}}(x)  = \max_{y \in \mathcal{X}} \ip{\Exp_{x}^{-1}(y)}{-g(x)} \; .
\end{align*}
Assuming that the Robbins-Monroe approximation $g(x)$ gives an unbiased estimate of the gradient $\grad \Phi(x)$ (*), we have (by Jensen's inequality and the convexity of the \emph{max}-function):
\begin{align*}
\mathbb{E} \left[ \hat{\mathcal{G}}(x)	\right]
\geq \max_{y \in \mathcal{X}} \ip{\Exp_{x}^{-1}(y)}{-\mathbb{E} \left[g(x) \right]}
\overset{(*)}{=} \max_{y \in \mathcal{X}} \ip{\Exp_{x}^{-1}(y)}{-\grad \Phi (x)} = \mathcal{G}(x) \; .
\end{align*}
With this, we can show that \srfw converges at a sublinear rate to first-order stationary points:
\begin{theorem}[Convergence \srfw]
\label{thm:srfw}
With constant steps size $\eta_k = \frac{1}{\sqrt{K}}$ and constant batch sizes $b_k=K$, Algorithm~\ref{alg.srfw} converges in expectation with a \emph{sublinear} rate, i.e.
\begin{align*}
\mathbb{E}_{\bm{\xi}} \left[ \mathcal{G}(\hat{x}) \right]=O(1/\sqrt{K}) \; .
\end{align*}
\end{theorem}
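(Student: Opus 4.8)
The plan is to follow the standard template for nonconvex Frank--Wolfe analysis, adapted to the Riemannian setting, with the stochastic gradient error controlled by Lemma~\ref{lem:RM}. First I would invoke $L$-smoothness in the form~\eqref{eq:L2} at the pair $(x_k, x_{k+1})$. Since $x_{k+1} = \gamma(\eta_k)$ lies on the geodesic from $x_k$ to $y_k$, the inverse exponential map scales linearly, $\Exp_{x_k}^{-1}(x_{k+1}) = \eta_k \Exp_{x_k}^{-1}(y_k)$, and likewise $d(x_k, x_{k+1}) = \eta_k\, d(x_k, y_k)$. Using compactness of $\Xc$ to set $D := \max_{x,y\in\Xc} d(x,y) < \infty$, this yields a one-step descent inequality
\begin{align*}
\Phi(x_{k+1}) \leq \Phi(x_k) + \eta_k \ip{\grad\Phi(x_k)}{\Exp_{x_k}^{-1}(y_k)}_{x_k} + \tfrac{L}{2}\eta_k^2 D^2.
\end{align*}

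The key obstacle --- and the only place where the analysis departs from the deterministic case --- is that $y_k$ is the minimizer of the \emph{stochastic} linear oracle with gradient $g_k := g(x_k)$, not the true gradient $\grad\Phi(x_k)$, so the inner product above is not directly $-\mathcal{G}(x_k)$. I would bridge this gap by writing $\grad\Phi(x_k) = g_k + (\grad\Phi(x_k) - g_k)$, using optimality of $y_k$ for $g_k$ to replace it by the true Frank--Wolfe vertex $y_k^\star$, and bounding the two resulting error terms $\ip{\grad\Phi(x_k)-g_k}{\Exp_{x_k}^{-1}(y_k)}$ and $\ip{g_k - \grad\Phi(x_k)}{\Exp_{x_k}^{-1}(y_k^\star)}$ via Cauchy--Schwarz and the diameter bound. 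This gives $\ip{\grad\Phi(x_k)}{\Exp_{x_k}^{-1}(y_k)} \leq -\mathcal{G}(x_k) + 2D\norm{g_k - \grad\Phi(x_k)}$, hence after rearranging,
\begin{align*}
\eta_k \mathcal{G}(x_k) \leq \Phi(x_k) - \Phi(x_{k+1}) + 2D\eta_k\norm{g_k - \grad\Phi(x_k)} + \tfrac{L}{2}\eta_k^2 D^2.
\end{align*}

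Finally I would take expectations, apply Lemma~\ref{lem:RM} with $b_k = K$ to bound $\mathbb{E}\norm{g_k - \grad\Phi(x_k)} \leq C/\sqrt{K}$, and sum over $k = 0,\dots,K-1$. With $\eta_k = 1/\sqrt{K}$ the telescoping of the $\Phi(x_k)-\Phi(x_{k+1})$ terms leaves $\Phi(x_0) - \Phi(x_K) \leq \Phi(x_0) - \min_{\Xc}\Phi =: \Delta$, while the two remaining contributions each sum to a constant, yielding $\frac{1}{\sqrt{K}}\sum_{k} \mathbb{E}[\mathcal{G}(x_k)] \leq \Delta + 2DC + \tfrac{L}{2}D^2$. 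Since $\hat{x}$ is drawn uniformly from $\lbrace x^k\rbrace_{k=0}^{K-1}$, we have $\mathbb{E}[\mathcal{G}(\hat{x})] = \frac{1}{K}\sum_k \mathbb{E}[\mathcal{G}(x_k)]$, and dividing through gives $\mathbb{E}[\mathcal{G}(\hat{x})] \leq \frac{1}{\sqrt{K}}\bigl(\Delta + 2DC + \tfrac{L}{2}D^2\bigr) = O(1/\sqrt{K})$, as claimed. The only genuinely delicate point is the stochastic-vertex bookkeeping; everything else is routine once the geodesic scaling identities and compactness are in place.
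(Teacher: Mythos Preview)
Your proof is correct and follows the same overall architecture as the paper (smoothness descent $\to$ split off the stochastic error $\to$ Cauchy--Schwarz $\to$ telescope), but the bookkeeping at the ``stochastic vertex'' step differs in a way worth noting.

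The paper never introduces the true Frank--Wolfe vertex $y_k^\star$. Instead it defines the \emph{stochastic} gap $\hat{\mathcal{G}}(x_k) := -\ip{g_k}{\Exp_{x_k}^{-1}(y_k)}$, so that after splitting $\grad\Phi(x_k) = g_k + (\grad\Phi(x_k)-g_k)$ only \emph{one} error term $\ip{\grad\Phi(x_k)-g_k}{\Exp_{x_k}^{-1}(y_k)}$ appears, bounded by $D\norm{g_k-\grad\Phi(x_k)}$. The passage back to the true gap $\mathcal{G}$ is then handled in expectation via the Jensen-type inequality $\mathbb{E}[\hat{\mathcal{G}}(x_k)\mid x_k] \geq \mathcal{G}(x_k)$ established just before the theorem. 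Your route---compare $y_k$ to $y_k^\star$ deterministically---avoids introducing $\hat{\mathcal{G}}$ altogether but pays for it with a second Cauchy--Schwarz term, hence the $2DC$ in your final bound versus $DC$ in the paper's. Both give the same $O(1/\sqrt{K})$ rate; your argument is arguably more transparent, while the paper's saves a constant and reuses the $\hat{\mathcal{G}}$ device in the later \svrfw and \spiderfw proofs.

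A cosmetic point: the paper packages the quadratic term via the curvature constant $M_\Phi$ from Lemma~\ref{A.smooth} rather than invoking~\eqref{eq:L2} directly, but since $M_\Phi \le L D^2$ (Lemma~\ref{lem:bound-on-M}) this is the same bound you obtain.
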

\noindent To prove the theorem, we need a few additional auxiliary results. First, recall the definition of the \emph{curvature constant} $M_{\Phi}$, introduced in~\citep{weber-sra}:
\begin{defn}[Curvature constant]
Let $x,y,z \in  \Xc$ and $\gamma: [0,1] \rightarrow \mathcal{M}$ a geodesic map with $\gamma(0)=x$, $\gamma(1)=z$ and $y=\gamma(\eta)$ for $\eta \in [0,1]$. Define
\begin{equation}
  \label{eq:defm}
  M_\Phi := \sup_{\substack{x,y,z \in \Xc \\ y = \gamma(\eta)}} 
  \tfrac{2}{\eta^2}\left[\Phi(y) - \Phi(x) - \ip{\grad \Phi(x)}{\Exp_x ^{-1}(y)} \right] \; .
\end{equation}
\end{defn}
\noindent We further recall two technical lemmas on $M_{\Phi}$; the proofs can be found in~\citep{weber-sra}:
\begin{lem}[\cite{weber-sra}]
\label{lem:bound-on-M}
Let $\Phi: \mathcal{M} \to \reals$ be $L$-smooth on $\Xc$; let $\mathrm{diam}(\Xc) := \displaystyle\sup_{x,y \in \Xc}{\rm d}{(x,y)}$. Then, 
the curvature constant $M_\phi$ satisfies the bound $M_{\Phi} \leq L \; {\rm diam}(\Xc)^2$.
\end{lem}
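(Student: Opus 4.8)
The plan is to bound the bracketed quantity in the definition~\eqref{eq:defm} of $M_\Phi$ directly using the descent-lemma form of $L$-smoothness in~\eqref{eq:L2}, and then exploit the fact that $y$ lies on the geodesic joining $x$ and $z$ to cancel the $\eta^{-2}$ prefactor. First I would fix admissible points $x,y,z \in \Xc$ with $y=\gamma(\eta)$, where $\gamma$ is the geodesic satisfying $\gamma(0)=x$ and $\gamma(1)=z$. Applying~\eqref{eq:L2} to the pair $(x,y)$ gives
\[
\Phi(y) - \Phi(x) - \ip{\grad \Phi(x)}{\Exp_x^{-1}(y)} \le \tfrac{L}{2}\, d^2(x,y),
\]
so the summand inside the supremum is at most $\tfrac{2}{\eta^2}\cdot\tfrac{L}{2}\,d^2(x,y) = \tfrac{L}{\eta^2}\,d^2(x,y)$.

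The next step is to convert $d(x,y)$ into $d(x,z)$. Since $\gamma$ is a constant-speed geodesic and $\Xc$ is geodesically convex (so that $\gamma|_{[0,1]}$ is the length-minimizing curve between its endpoints and stays in $\Xc$), the distance from $\gamma(0)$ to $\gamma(\eta)$ equals $\eta$ times the length of $\gamma$, i.e. $d(x,y) = \eta\, d(x,z)$. Substituting this identity collapses the prefactor:
\[
\frac{L}{\eta^2}\,d^2(x,y) = \frac{L}{\eta^2}\,\eta^2\,d^2(x,z) = L\, d^2(x,z) \le L\,\mathrm{diam}(\Xc)^2,
\]
where the final inequality uses $d(x,z)\le \mathrm{diam}(\Xc)$ because $x,z\in\Xc$. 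As this bound is independent of $\eta$ and of the chosen points, taking the supremum over all admissible $x,y,z$ in~\eqref{eq:defm} yields $M_\Phi \le L\,\mathrm{diam}(\Xc)^2$, as claimed.

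I expect the main obstacle to be justifying the geodesic-scaling identity $d(x,y)=\eta\,d(x,z)$, which is what makes the $\eta^2$ cancellation legitimate. Constant speed of $\gamma$ gives $d(\gamma(0),\gamma(\eta))=\eta\cdot\mathrm{length}(\gamma|_{[0,1]})$, but equating $\mathrm{length}(\gamma|_{[0,1]})$ with the Riemannian distance $d(x,z)$ requires $\gamma$ to be \emph{minimizing}. On a general manifold geodesics are only locally minimizing, so this is precisely where the geodesic convexity and compactness of $\Xc$ are used: they guarantee that every pair of points in $\Xc$ is joined by a minimizing geodesic lying in $\Xc$, making the identity exact. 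Once this is in place, the remaining steps are elementary and the whole argument reduces to the two displays above.
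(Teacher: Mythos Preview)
Your argument is correct and is exactly the natural one: apply the $L$-smoothness inequality~\eqref{eq:L2} to bound the bracket in~\eqref{eq:defm}, then use $d(x,\gamma(\eta))=\eta\,d(x,z)$ for a minimizing geodesic to absorb the $\eta^{-2}$ factor, and finish with $d(x,z)\le\mathrm{diam}(\Xc)$. Your discussion of why the scaling identity requires $\gamma$ to be minimizing, and why the standing hypotheses on $\Xc$ guarantee this, is also to the point.

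Note, however, that this paper does not actually supply a proof of Lemma~\ref{lem:bound-on-M}: it merely recalls the statement and refers to \citep{weber-sra} for the argument. So there is no in-paper proof to compare against. That said, the proof in the cited source follows precisely the route you outline (smoothness bound followed by the geodesic length relation), so your proposal matches the intended argument.
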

\begin{lem}[\cite{weber-sra}]
  \label{A.smooth}
  Let $\Xc$ be a constrained set. There exists a constant $M_{\Phi} \ge 0$ such that for $x_k, x_{k+1}, y_k \in \Xc$ as specified in Algorithm~\ref{alg.srfw}, and for $\eta_k \in (0,1)$
 \begin{equation*}
    \Phi(x_{k+1}) \le \Phi(x_k) + \eta_k \ip{\grad \; \Phi(x_k)}{\Exp_{x_k} ^{-1}(y_k)} + \half M_{\Phi}\eta_k^2.
  \end{equation*}
\end{lem}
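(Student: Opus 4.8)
The plan is to derive this descent inequality directly from the definition of the curvature constant $M_\Phi$ in~\eqref{eq:defm}, specialized to the particular geodesic generated by the \srfw update. In Algorithm~\ref{alg.srfw} the next iterate is $x_{k+1} = \gamma(\eta_k)$, where $\gamma$ is the geodesic with $\gamma(0) = x_k$ and $\gamma(1) = y_k$. I would therefore instantiate the supremum defining $M_\Phi$ at the single admissible triple $x = x_k$, $z = y_k$, and $y = x_{k+1} = \gamma(\eta_k)$; these all lie in $\Xc$ because $\Xc$ is geodesically convex, so the entire geodesic from $x_k$ to $y_k$ stays in $\Xc$. Since $M_\Phi$ is a supremum over all such triples, this one choice already yields
\begin{equation*}
  M_\Phi \;\ge\; \tfrac{2}{\eta_k^2}\bigl[\Phi(x_{k+1}) - \Phi(x_k) - \ip{\grad \Phi(x_k)}{\Exp_{x_k}^{-1}(x_{k+1})}\bigr].
\end{equation*}

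The only discrepancy between this bound and the claimed inequality is in the linear term: the lemma carries $\eta_k\,\ip{\grad \Phi(x_k)}{\Exp_{x_k}^{-1}(y_k)}$, the direction to the endpoint $y_k$ scaled by $\eta_k$, whereas the bound above carries $\ip{\grad \Phi(x_k)}{\Exp_{x_k}^{-1}(x_{k+1})}$, the direction to the intermediate point $\gamma(\eta_k)$. Reconciling these is the key step, and it rests on the affine reparametrization property of geodesics: writing $v := \Exp_{x_k}^{-1}(y_k) = \dot\gamma(0) \in T_{x_k}\Mc$, a constant-speed geodesic satisfies $\gamma(t) = \Exp_{x_k}(t v)$, so $\gamma(\eta_k) = \Exp_{x_k}(\eta_k v)$ and hence $\Exp_{x_k}^{-1}(x_{k+1}) = \eta_k\,\Exp_{x_k}^{-1}(y_k)$. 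Substituting this identity and using linearity of the inner product in its second argument turns the displayed bound into $M_\Phi \ge \tfrac{2}{\eta_k^2}[\Phi(x_{k+1}) - \Phi(x_k) - \eta_k\,\ip{\grad \Phi(x_k)}{\Exp_{x_k}^{-1}(y_k)}]$; multiplying through by $\eta_k^2/2$ and rearranging gives exactly the asserted inequality.

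Finally, I would justify the existence of a valid constant $M_\Phi \ge 0$. Nonnegativity follows by admitting the degenerate triple $z = x$, for which the geodesic is constant and $y = \gamma(\eta) = x$, so the bracketed quantity vanishes and the supremum is at least $0$; finiteness is supplied by the preceding Lemma~\ref{lem:bound-on-M}, which bounds $M_\Phi \le L\,\mathrm{diam}(\Xc)^2 < \infty$ from $L$-smoothness of $\Phi$ together with compactness of $\Xc$. The one step requiring genuine care is the reparametrization identity $\Exp_{x_k}^{-1}(\gamma(\eta_k)) = \eta_k\,\Exp_{x_k}^{-1}(y_k)$: it is what forces the intermediate-point expansion built into the definition of $M_\Phi$ to align with the endpoint direction produced by the \srfw oracle, and it holds precisely because $\gamma$ is a geodesic, so the exponential map linearizes the reparametrization, rather than merely an arbitrary curve joining $x_k$ to $y_k$.
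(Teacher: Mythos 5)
Your proof is correct and is essentially the argument the paper defers to \citep{weber-sra}: the curvature constant $M_\Phi$ is defined precisely so that instantiating its supremum at the triple $(x_k, x_{k+1}, y_k)$ and applying the geodesic reparametrization identity $\Exp_{x_k}^{-1}(\gamma(\eta_k)) = \eta_k\,\Exp_{x_k}^{-1}(y_k)$ yields the stated inequality. You also correctly supply the two side conditions (nonnegativity via the degenerate triple $z=x$, finiteness via Lemma~\ref{lem:bound-on-M}) and flag the one genuinely delicate point, namely that the reparametrization identity relies on $\gamma$ being the unique geodesic within the geodesically convex set $\Xc$.
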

\noindent With this, we can now prove Theorem~\ref{thm:srfw}:\\
\begin{proof}(Theorem~\ref{thm:srfw})
Let again
\begin{align}
g_k (x_k) := \frac{1}{b_k} \sum_{i=1}^{b_k} \grad \; \phi(x_k, \xi_i)
\end{align}
denote the gradient estimate from the $k^{th}$ batch. Then
\begin{align}\label{eq:srfw-1}
\Phi(x_{k+1}) &\overset{(1)}{\leq} \Phi(x_k) + \eta_k \ip{\grad \; \Phi(x_k)}{\Exp_{x_k}^{-1}(y_k)} + \frac{1}{2} M_{\Phi} \eta_k^2 \\
&\overset{(2)}{\leq} \Phi(x_k) + \eta_k \ip{g_k(x_k)}{\Exp_{x_k}^{-1}(y_k)} + \eta_k \ip{\grad \; \Phi(x_k) - g_k(x_k)}{\Exp_{x_k}^{-1}(y_k)} + \frac{1}{2} M_{\Phi} \eta_k^2 \; 
\end{align}
Here, (1) follows from Lemma~\ref{A.smooth} and (2) from 'adding a zero' with respect to $g_k$. 
We then apply the Cauchy-Schwartz inequality to the inner product and make use of the fact that the geodesic distance between points in $\Xc$ is bounded by its diameter:
\begin{align}
\ip{\grad \; \Phi(x_k) - g_k(x_k)}{\Exp_{x_k}^{-1}(y_k)} \leq \norm{\grad \; \Phi(x_k) - g_k(x_k)} \cdot \underbrace{\norm{\Exp_{x_k}^{-1}(y_k)}}_{\leq {\rm diam}(\Xc)} \; .
\end{align}
This gives (with $D := {\rm diam}(\Xc)$)
\begin{align*}
\Phi(x_{k+1}) &\leq \Phi(x_k) + \eta_k \ip{g_k(x_k)}{\Exp_{x_k}^{-1}(y_k)} + \eta_k D \norm{\grad \; \Phi(x_k) - g_k(x_k)} + \frac{1}{2} M_{\Phi} \eta_k^2 \; .
\end{align*}
Taking expectations and applying Lemma~\ref{lem:RM} to the third term on the right-hand-side, we get 
\begin{align*}
\mathbb{E}\left[\Phi(x_{k+1)} \right] &\leq \mathbb{E}\left[\Phi(x_k) \right] - \eta_k \mathbb{E}\left[\hat{\mathcal{G}} (x_k) \right] + \eta_k D \frac{C}{\sqrt{b_k}} + \frac{1}{2} M_{\Phi} \eta_k^2 \; ,
\end{align*}
where we have rewritten the second term in terms of the stochastic Frank-Wolfe gap
\begin{align*}
\mathbb{E} \left[\hat{\mathcal{G}}(x_k) \right] =  - \mathbb{E} \left[  \ip{g_k(x_k)}{\Exp_{x_k}^{-1}(y_k)} \right] \; .
\end{align*}
Summing over all $k$ batches, telescoping and reordering terms gives
\begin{align}\label{eq:ineq}
\sum_k \eta_k \mathbb{E}\left[\hat{\mathcal{G}} (x_k) \right] &\leq \mathbb{E}\left[\Phi(x_{0}) \right] - \mathbb{E}\left[\Phi(x_{K}) \right] + \sum_k \eta_k D \frac{C}{\sqrt{b_k}} + \sum_k \frac{1}{2} M_{\Phi} \eta_k^2 \\
&\leq \left(\Phi(x_{0}) - \Phi(x_K) \right) + \sum_k \eta_k D \frac{C}{\sqrt{b_k}} + \sum_k \frac{1}{2} M_{\Phi} \eta_k^2 \; .
\end{align}
From Algorithm~\ref{alg.srfw} we see that the output $\hat{x}$ is chosen uniformly at random from $\lbrace x_1 , ... , x_K \rbrace$, i.e. $\mathbb{E} \left[\mathbb{E}\left[\hat{\mathcal{G}} (x_k) \right] \right]= \mathbb{E}\left[\mathcal{G} (\hat{x}) \right]$, where we have used that, by construction, $\mathbb{E}\left[ \hat{\mathcal{G}}(x) \right] = \mathcal{G}(x)$. Now, with constant step sizes $\eta_k=\eta$ and batch sizes $b_k=b$, we have
\begin{align*}
K \eta \mathbb{E}\left[\mathcal{G} (\hat{x}) \right] &\leq \left(\Phi(x_{0}) - \Phi(x_K) \right) + K \eta D \frac{C}{\sqrt{b}} + K \frac{1}{2} M_{\Phi} \eta^2 \; .
\end{align*}
Now, let $C_{x_0}>0$ be an initialization-dependent constant, such that $C_{x_0} > \Phi(x_0) - \E \left[	\Phi(x^\star)\right]$, where $x^{\star}$ is a first-order stationary point.
From $\eta=\frac{1}{\sqrt{K}}$ and $b=K$ we see that
\begin{align*}
\mathbb{E}\left[\mathcal{G}  (\hat{x}) \right] &\leq \frac{1}{\sqrt{K}} \left(C_{x_0} +  D C + \frac{1}{2} M_{\Phi}\right) \; ,
\end{align*}
which shows the desired sublinear convergence rate.
\end{proof}
\begin{cor}
\srfw obtains an $\epsilon$-accurate solution with SFO complexity of $O \left(\frac{1}{\epsilon^4}	\right)$ and RLO complexity of $O \left(\frac{1}{\epsilon^2}	\right)$.
\end{cor}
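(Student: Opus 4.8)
The plan is to read off the two oracle complexities directly from the convergence bound established in Theorem~\ref{thm:srfw}, by first determining the number of iterations $K$ needed to reach an $\epsilon$-accurate point (i.e.\ a point with $\mathbb{E}[\mathcal{G}(\hat{x})] \le \epsilon$) and then counting the oracle calls made per iteration.

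First I would recall the explicit bound obtained in the proof of Theorem~\ref{thm:srfw} under the choices $\eta_k = 1/\sqrt{K}$ and $b_k = K$, namely
\[
\mathbb{E}[\mathcal{G}(\hat{x})] \le \frac{1}{\sqrt{K}}\left(C_{x_0} + DC + \tfrac{1}{2}M_\Phi\right).
\]
Abbreviating the (problem-dependent, $K$-independent) constant as $C' := C_{x_0} + DC + \tfrac{1}{2}M_\Phi$, I would impose $C'/\sqrt{K} \le \epsilon$, which rearranges to $K \ge (C')^2/\epsilon^2$; hence $K = O(1/\epsilon^2)$ iterations suffice for $\epsilon$-accuracy.

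Next I would count oracle calls per iteration. The RLO is invoked exactly once in each iteration (step 5 of Algorithm~\ref{alg.srfw}), so the total number of RLO calls equals $K = O(1/\epsilon^2)$, matching the claimed RLO complexity. The SFO is queried once for each of the $b_k = K$ samples drawn to form the minibatch gradient, so each iteration makes $K$ SFO calls; summing over the $K$ iterations yields $K \cdot K = K^2 = O(1/\epsilon^4)$ SFO calls, as claimed.

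The only point needing care --- rather than a genuine obstacle --- is the bookkeeping of the growing batch size. Because $b_k = K$ scales with the iteration budget, the per-iteration SFO cost is itself $O(1/\epsilon^2)$, and it is the product of this cost with the $O(1/\epsilon^2)$ iterations that produces the $O(1/\epsilon^4)$ SFO bound, in contrast to the RLO bound where each iteration contributes only a single call. I would also note that $C'$ is independent of $K$ --- it depends only on the initialization $x_0$, the diameter $D$, the gradient bound $C$, and the curvature constant $M_\Phi$ (finite by Lemma~\ref{lem:bound-on-M}) --- so the threshold on $K$ above is well-defined and the complexity counts follow.
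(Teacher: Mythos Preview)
Your proposal is correct and follows essentially the same approach as the paper: the paper's proof likewise invokes Theorem~\ref{thm:srfw} to conclude that $K = O(1/\epsilon^2)$ iterations suffice (giving the RLO bound), and then computes $\sum_{k=0}^{K-1} b_k = Kb = K^2 = O(1/\epsilon^4)$ for the SFO bound. Your version is somewhat more explicit about the constant $C'$ and the per-iteration bookkeeping, but the argument is the same.
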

\begin{proof}
It follows directly from Theorem~\ref{thm:srfw} that \srfw achieves an $\epsilon$-accurate solution after $O \left(\frac{1}{\epsilon^2}	\right)$ iteration, i.e., its RLO complexity is $O \left(\frac{1}{\epsilon^2}	\right)$. For the SFO complexity, note that
\begin{align*}
\sum_{k=0}^{K-1} b_k = Kb = K^2 \lesssim O \left(	\frac{1}{\epsilon^4}\right).
\end{align*}
\end{proof}

\noindent For g-convex objectives, we can obtain a global convergence result in terms of the optimality gap $\Delta_k := \Phi(x_k)-\Phi(x^*)$. Here, \srfw converges at a sublinear rate to the global optimum $\Phi(x^*)$.
\begin{cor}\label{cor:srfw}
If $\Phi$ is g-convex, then under the assumptions of Theorem~\ref{thm:srfw} the optimality gap converges as $\mathbb{E}_{\bm{\xi}} \left[\Delta_{k} \right] = O(1/\sqrt{K})$.
\end{cor}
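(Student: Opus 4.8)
The plan is to show that geodesic convexity converts the Frank--Wolfe gap bound from Theorem~\ref{thm:srfw} into a bound on the optimality gap, with essentially no new work beyond a one-line application of the g-convexity inequality~\eqref{eq:g-convex}.

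First I would establish the pointwise domination $\Delta_k \le \mathcal{G}(x_k)$. Writing~\eqref{eq:g-convex} at the pair $(x_k, x^*)$ gives $\Phi(x^*) \ge \Phi(x_k) + \ip{\grad \Phi(x_k)}{\Exp_{x_k}^{-1}(x^*)}$, and rearranging yields $\Phi(x_k) - \Phi(x^*) \le \ip{\Exp_{x_k}^{-1}(x^*)}{-\grad \Phi(x_k)}$. Since $x^* \in \Xc$ is a feasible competitor in the maximization defining $\mathcal{G}$, the right-hand side is at most $\max_{y \in \Xc} \ip{\Exp_{x_k}^{-1}(y)}{-\grad \Phi(x_k)} = \mathcal{G}(x_k)$, so $\Delta_k \le \mathcal{G}(x_k)$ for every $k$.

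Next I would transfer the bound of Theorem~\ref{thm:srfw} through this inequality. Taking expectations gives $\mathbb{E}[\Delta_k] \le \mathbb{E}[\mathcal{G}(x_k)]$ for each $k$; averaging over $k = 0, \dots, K-1$ and using that the output $\hat{x}$ is drawn uniformly from the iterates, so that $\tfrac{1}{K}\sum_k \mathbb{E}[\mathcal{G}(x_k)] = \mathbb{E}[\mathcal{G}(\hat{x})]$, reduces the claim to the already-proven estimate $\mathbb{E}[\mathcal{G}(\hat{x})] = O(1/\sqrt{K})$. This shows that the optimality gap at the randomly returned point (equivalently, the iterate-averaged optimality gap) is $O(1/\sqrt{K})$.

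I do not expect a substantive obstacle here: the entire $1/\sqrt{K}$ decay was already earned in Theorem~\ref{thm:srfw}, and g-convexity is used only to pass from near-stationarity (small gap) to global optimality. The two points that must be stated carefully are (i) that $x^* \in \Xc$, which legitimizes plugging $y = x^*$ into the $\max$, and (ii) that $\Delta_k$ in the statement should be read as the gap at the sampled output $\hat{x}$ (or the average over iterates), consistent with how $\hat{x}$ enters Theorem~\ref{thm:srfw}. Both are immediate from the compact, geodesically convex feasible set and the sampling rule of Algorithm~\ref{alg.srfw}.
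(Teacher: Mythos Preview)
Your argument is correct, but it takes a cleaner route than the paper. The paper does not simply cite Theorem~\ref{thm:srfw}; instead it reopens the descent inequality from Lemma~\ref{A.smooth}, replaces $y_k$ by $x^*$ using the argmin property of $y_k$, takes expectations so that the unbiasedness of $g_k$ turns $\mathbb{E}[\ip{g_k(x_k)}{\Exp_{x_k}^{-1}(x^*)}]$ into $\ip{\grad\Phi(x_k)}{\Exp_{x_k}^{-1}(x^*)}$, and only then invokes g-convexity to bound this inner product by $-\Delta_k$. After telescoping and inserting $\eta=1/\sqrt{K}$, $b=K$, it arrives at the same $O(1/\sqrt{K})$ bound for $\mathbb{E}[\Delta_{\hat{k}}]$.

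Your approach bypasses this re-derivation entirely: the pointwise inequality $\Delta_k \le \mathcal{G}(x_k)$ (which the paper also notes in passing as ``the Frank-Wolfe gap upper-bounds the optimality gap'') plus the uniform sampling rule reduce the corollary to a one-line consequence of Theorem~\ref{thm:srfw}. What the paper's longer proof buys is that it is self-contained and makes explicit exactly where g-convexity enters the telescoping sum; what your proof buys is economy and modularity, since it treats Theorem~\ref{thm:srfw} as a black box. Both are valid, and your observation that $x^* \in \Xc$ is needed to plug $y=x^*$ into the maximum is precisely the feasibility point the paper uses when replacing $y_k$ by $x^*$.
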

\begin{proof}
In the proof of Theorem~\ref{thm:srfw}, Eq.~\ref{eq:srfw-1}, note that
\begin{align*}
\Phi(x_{k+1}) &\leq \Phi(x_k) + \eta_k \ip{g_k(x_k)}{\Exp_{x_k}^{-1}(y_k)} + \eta_k \ip{\grad \; \Phi(x_k) - g_k(x_k)}{\Exp_{x_k}^{-1}(y_k)} + \frac{1}{2} M_{\Phi} \eta_k^2 \\
&\overset{(1)}{\leq} \Phi(x_k) + \eta_k \ip{g_k(x_k)}{\Exp_{x_k}^{-1}(x^*)} + \eta_k \ip{\grad \; \Phi(x_k) - g_k(x_k)}{\Exp_{x_k}^{-1}(y_k)} + \frac{1}{2} M_{\Phi} \eta_k^2 
\end{align*}
where (1) follows from $y_k$ being the $\argmin$ as defined in Algorithm~\ref{alg.srfw}. 
Note that in the third term, the Cauchy-Schwartz inequality gives
\begin{align*}
\ip{\grad \Phi(x_k) - g_k (x_k)}{\Exp_{x_k}^{-1}(y_k)} \leq \norm{\grad \Phi(x_k) - g_k (x_k)}
\underbrace{\norm{\Exp_{x_k}^{-1}(y_k)}}_{\leq {\rm diam}(\Xc) =: D} \; .
\end{align*}
Inserting this above and taking expectations, we have
\begin{align*}
\E \left[ \Phi (x_{k+1}) \right] &\leq \E \left[	\Phi(x_k)	\right] + \eta_k \E \left[	\ip{g_k (x_k)}{\Exp_{x_k}^{-1}(x^*)}	\right]+ \eta_k D \underbrace{\E \left[	\norm{\grad \Phi(x_k) - g_k (x_k)}	\right]}_{\leq \frac{C}{\sqrt{b_k}}} + \frac{1}{2} M_{\Phi} \eta_k^2 \\
&\overset{(2)}{\leq} \E \left[	\Phi(x_k)	\right] + \eta_k \E \left[	\ip{ g_k (x_k)}{\Exp_{x_k}^{-1}(x^*)}	\right]+ \eta_k D \frac{C}{\sqrt{b_k}} + \frac{1}{2} M_{\Phi} \eta_k^2 \; ,
\end{align*}
where (2) follows from Lemma~\ref{lem:RM}.  For the second term, we have
\begin{align*}
\E \left[	\ip{ g_k (x_k)}{\Exp_{x_k}^{-1}(x^*)}	\right] = \ip{ \E \left[g_k (x_k)\right]}{\Exp_{x_k}^{-1}(x^*)}\overset{(3)}{ =} \ip{ \grad \Phi (x_k)}{\Exp_{x_k}^{-1}(x^*)} \overset{(4)}{\leq} - \left( \Phi(x_k) - \Phi(x^*) \right) \; ,
\end{align*}
since (3) $g_k (x_k)$ is an unbiased estimate of $\grad \Phi(x_k)$ and (4) the Frank-Wolfe gap upper-bounds the optimality gap, which is a direct consequence of the g-convexity of $\Phi$ (see Eq.~\ref{eq:g-convex}).  Let $\Delta_k := \Phi(x_k) - \Phi(x^*)$ denote the optimality gap. Then,  putting everything together and reording terms, we get
\begin{align*}
\eta_k \E \left[\Delta_k	\right] \leq \E \left[	\Phi(x_k) - \Phi(x_{k+1})\right] +  \eta_k D \frac{C}{\sqrt{b_k}} + \frac{1}{2} M_{\Phi} \eta_k^2 \; .
\end{align*}
Summing, telescoping and inserting the definition of the output ($\hat{x}$ with optimality gap $\Delta_{\hat{k}} = \Phi(\hat{x}) - \Phi (x^*)$), we have
\begin{align*}
\E \left[	\Delta_{\hat{k}}	\right] \left( \sum_k \eta_k	\right) \leq \left( \Phi (x_0) - \Phi (x_K) \right) + DC \sum_{k} \frac{\eta_k}{\sqrt{b_k}} + \frac{1}{2} \sum_k \eta_k^2 \; .
\end{align*}
With the parameter choice $\eta_k = \eta = \frac{1}{\sqrt{K}}$ and $b_k = b = K$, 
the claim follows as
\begin{align*}
\E \left[ \Delta_{\hat{k}} \right] \leq \frac{1}{\sqrt{K}} \left(	\Delta_{x_0} + DC + \frac{1}{2} M_{\Phi}	\right) \; ,
\end{align*}
where $\Delta_{x_0}$ denotes the initial optimality gap, which is a constant whose value depends on the initialization only.
\end{proof}

\noindent A shortcoming of \srfw is its large batch sizes. 
We expect that choosing a non-constant, decreasing step size will reduce the required batch size. 

\subsection{Stochastic variance-reduced Frank-Wolfe}
In addition to the purely stochastic \srfw method we can obtain a stochastic \fw algorithm via a (semi-stochastic) \emph{variance-reduced} approach for problems with a \emph{finite-sum structure}~\eqref{eq:opt2}.  Recall, that in problem~\eqref{eq:opt2}, we assume that the cost function $\Phi$ can be represented as a finite sum $\Phi (x) = \frac{1}{m} \sum_{i=1}^m \phi_i (x)$, where the $\phi_i$ are $L$-smooth (but may be nonconvex). 
We will see that by exploiting the finite-sum structure, we can obtain provably faster FW algorithms. 

%
\begin{algorithm*}[t]
\begin{small}
  \caption{Semi-stochastic variance-reduced Riemannian Frank-Wolfe (\svrfw)}
  \label{alg.svrfw}
  \begin{algorithmic}[1] 
    \State Initialize $\tilde{x}^0 \in \Xc$; assume access to the geodesic map $\gamma: [0,1] \rightarrow \mathcal{M}$. 
    \State Choose number of iterations $S$ and size of epochs $K$ and set minibatch sizes $\lbrace b_k \rbrace_{k=0}^{K-1}$. 
     \For {$s=0,\dots S-1$}
     	\State Compute gradient at $\tilde{x}^s$: $\grad \Phi(\tilde{x}^s) = \frac{1}{N} \sum_{i=1}^m \grad \phi_i (\tilde{x}^s)$.
     	\For {$k=1,\dots K$}
        		\State Sample i.i.d.  $I_k := \left( i_{1} , ..., i_{b_k}\right)\subseteq [m]$ (minibatches).
        		\State $z_{k+1}^{s+1} \gets \argmin_{z \in \Xc} \ip{\frac{1}{b_k} \sum_{j=i_1, ..., i_{b_k}} \grad \phi_j (x_k^{s+1}) - \Gamma_{\tilde{x}^s}^{x_k^{s+1}} \left(	 \grad \phi_j (\tilde{x}^s))	- \grad \Phi (\tilde{x}^s)\right)}{\Exp_{\tilde{x}^s}^{-1}(z)}$
        		\State Compute step size $\eta_k$ and set $x_{k+1}^{s+1} \gets \gamma(\eta_k)$, where $\gamma(0)=x_{k}^{s+1}$ and $\gamma(1)=z_{k+1}^{s+1}$.
        \EndFor
        \State $\tilde{x}^{s+1} = x_K^s$.
     \EndFor
    \State Output $\hat{x}=\tilde{x}_K^S$.
   \end{algorithmic}
   \end{small}
\end{algorithm*}

We first propose \svrfw (Algorithm~\ref{alg.svrfw}), which combines \rfw with a classic variance-reduced estimate of the gradient. This resulting algorithm computes the full gradient at the beginning of each epoch and uses batch estimates within epochs. The variance-reduced gradient estimate guarantees the following bound on the approximation error:\\
\begin{lem}[Goodness of variance-reduced gradient estimate]\label{lem:VR}
Consider the $k^{\text{th}}$ iteration in the $s^{\text{th}}$ epoch and the \emph{stochastic variance-reduced gradient estimate} with respect to a minibatch $I_k = \left(	i_1, \dots,i_{b_k}	\right)$
\begin{align*}
g_k (x_k^{s+1}) = \frac{1}{b_k} \sum_{j=i_1, \dots, i_{b_k}} \grad \phi_j (x_k^{s+1}) - \Gamma_{\tilde{x}^s}^{x_k^{s+1}} \left( \grad \phi_j (\tilde{x}^s) - \grad \Phi(\tilde{x}^s)	\right) \; ,
\end{align*}
with the $\lbrace \phi_i \rbrace$ assumed to be $L$-Lipschitz. Then the expected deviation of the estimate $g_k$ from the true gradient $\grad \Phi$ is bounded as
\begin{align*}
\mathbb{E}_{I_k} \left[	\norm{\grad \Phi(x_k ^{s+1})- g_k (x_k^{s+1})} 	\right] \leq \frac{L}{\sqrt{b_k}} d(x_k^{s+1},\tilde{x}^s) \; .
\end{align*}
\end{lem}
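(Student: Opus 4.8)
The plan is to reproduce the structure of the proof of Lemma~\ref{lem:RM}, replacing the crude gradient-norm bound by the control-variate (variance-reduction) structure of $g_k$. First I would check that $g_k(x_k^{s+1})$ is an \emph{unbiased} estimator of $\grad\Phi(x_k^{s+1})$. Writing the contribution of a single sampled index $j$ as
\[
 g_j := \grad\phi_j(x_k^{s+1}) - \Gamma_{\tilde{x}^s}^{x_k^{s+1}}\bigl(\grad\phi_j(\tilde{x}^s) - \grad\Phi(\tilde{x}^s)\bigr),
\]
and using that $j$ is drawn uniformly from $[m]$ together with $\frac1m\sum_i \grad\phi_i = \grad\Phi$, linearity of expectation gives $\mathbb{E}_j[g_j] = \grad\Phi(x_k^{s+1})$: the two $\tilde{x}^s$-dependent terms cancel in expectation. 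Hence $\mathbb{E}_{I_k}[g_k] = \grad\Phi(x_k^{s+1})$.

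Next, exactly as in Lemma~\ref{lem:RM}, I would pass to a second-moment bound via Jensen's inequality, so it suffices to prove $\mathbb{E}_{I_k}\bigl[\norm{g_k - \grad\Phi(x_k^{s+1})}^2\bigr] \le \frac{L^2}{b_k}\,d(x_k^{s+1},\tilde{x}^s)^2$. Since $g_k - \grad\Phi(x_k^{s+1}) = \frac{1}{b_k}\sum_{j\in I_k}\bigl(g_j - \grad\Phi(x_k^{s+1})\bigr)$ is an average of $b_k$ i.i.d., mean-zero tangent vectors living in the common space $T_{x_k^{s+1}}\mathcal{M}$, Remark~\ref{eq:xi-sum-squares} kills the cross terms and produces the variance-reduction factor $1/b_k$, reducing the bound to the single-sample quantity $\frac{1}{b_k}\mathbb{E}_j\bigl[\norm{g_j - \grad\Phi(x_k^{s+1})}^2\bigr]$.

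It then remains to control one sample. Here I would invoke the elementary inequality $\mathbb{E}\norm{Y-\mathbb{E}Y}^2 \le \mathbb{E}\norm{Y}^2$ to discard the centering, observing that the deterministic term $\Gamma_{\tilde{x}^s}^{x_k^{s+1}}\grad\Phi(\tilde{x}^s)$ contributes nothing to the variance. This collapses the per-sample bound to $\mathbb{E}_j\norm{\grad\phi_j(x_k^{s+1}) - \Gamma_{\tilde{x}^s}^{x_k^{s+1}}\grad\phi_j(\tilde{x}^s)}^2$, which is precisely the expression bounded by the $L$-smoothness definition~\eqref{eq:L} applied to each $\phi_j$ with $x = \tilde{x}^s$ and $y = x_k^{s+1}$, giving the pointwise estimate $L^2 d(x_k^{s+1},\tilde{x}^s)^2$. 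Chaining the displays and taking square roots yields the claim.

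The only genuinely Riemannian step, and the one I would treat most carefully, is the role of parallel transport: both the cancellation in the unbiasedness computation and the ``variance $\le$ second moment'' reduction rely on $\Gamma_{\tilde{x}^s}^{x_k^{s+1}}$ being a \emph{linear isometry} onto $T_{x_k^{s+1}}\mathcal{M}$, so that all the vectors involved can be added and compared in a single tangent space with norms preserved. With that observation in place, the argument is the standard SVRG control-variate estimate transported to the manifold.
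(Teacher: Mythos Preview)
Your proposal is correct and follows essentially the same route as the paper: establish unbiasedness, reduce to a second-moment bound via Jensen, use the i.i.d.\ zero-mean structure (Remark~\ref{eq:xi-sum-squares}) to extract the $1/b_k$ factor, apply the ``variance $\le$ second moment'' inequality to drop the centering, and finish with the $L$-smoothness bound~\eqref{eq:L}. Your explicit remark that parallel transport is a linear isometry (so all vectors live in a common tangent space with norms preserved) is a helpful clarification that the paper leaves implicit.
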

\noindent We again drop the subscript $I_k$, whenever it is clear from context.\\
\begin{proof}
Following Algorithm~\ref{alg.svrfw}, let $I_k = \left( i_1 , \dots, i_{b_k} \right)$ denote the sample in the $k$th iteration of the $s$th epoch. We introduce the shorthands
\begin{align*}
\zeta_k^{s+1} &= \frac{1}{b_k} \sum_{l=1}^{b_k} \grad \phi_{i_l}(x_k^{s+1}) - \Gamma_{\tilde{x}^s}^{x_k^{s+1}} \grad \phi_{i_l} (\tilde{x}^s) \\
\zeta_{k,i_l}^{s+1} &= \grad \phi_{i_l} (x_k^{s+1}) - \Gamma_{\tilde{x}^s}^{x_k^{s+1}} \grad \phi_{i_l} (\tilde{x}^s)	\; ,
\end{align*}
i.e.,  $\zeta_k^{s+1} = \frac{1}{b_k} \sum_{l=1}^{b_k} \zeta_{k,i_l}^{s+1}$. Then we have
\begin{align*}
\mathbb{E} \left[	\norm{\grad \Phi(x_k^{s+1}) - g_k(x_k^{s+1})}^2	\right] &= \mathbb{E} \left[	\norm{	\zeta_k^{s+1} - \grad \Phi(x_k^{s+1}) + \Gamma_{\tilde{x}^s}^{x_k^{s+1}} \grad \Phi(\tilde{x}^s)	}^2	\right] \\
&\overset{(1)}{=} \mathbb{E} \left[	\norm{	\zeta_k^{s+1} - \mathbb{E} \left(	\zeta_k^{s+1}	\right)}^2 \right] \; .
\end{align*}
Here, (1) follows from the following argument:
\begin{align*}
\grad \Phi(x_k^{s+1}) - \Gamma_{\tilde{x}^s}^{x_k^{s+1}} \grad \Phi(\tilde{x}^s)
&\overset{(*)}{=}  \mathbb{E} \left[\frac{1}{b_k} \sum_l \grad \phi_{i_l}(x_k^{s+1}) - \Gamma_{\tilde{x}^s}^{x_k^{s+1}} \grad \phi_{i_l}(\tilde{x}^s) \right] \\
&= \mathbb{E} \left[ \frac{1}{b_k} \sum_l \zeta_{k,i_l}^{s+1}  \right] \\
&= \mathbb{E} \left( \zeta_k^{s+1} \right) \; ,
\end{align*}
where in (*) we used the assumption that the variance-reduced gradient is an unbiased estimate of the full Riemannian gradient.
We further have
\begin{align*}
\mathbb{E} \left[	\norm{ \zeta_k^{s+1} - \mathbb{E} \left[ \zeta_k^{s+1}	\right]}^2	\right] 
&= \mathbb{E} \left[	\norm{\frac{1}{b_k} \sum_l \zeta_{k, i_l}^{s+1} - \mathbb{E} \left[ \frac{1}{b_k}  \sum_l \zeta_{k, i_l}^{s+1}
\right] }^2 \right] \\
&= \frac{1}{b_k^2} \mathbb{E} \left[ \norm{ \sum_l \left(	\zeta_{k, i_l}^{s+1} - \mathbb{E}  \left[  \zeta_{k, i_l}^{s+1} \right]	\right)	}^2 \right] \\
&\overset{(2)}{\leq} \frac{1}{b_k^2} \mathbb{E}  \left[\sum_l	\norm{   \zeta_{k, i_l}^{s+1} - \mathbb{E} \left[	 \zeta_{k, i_l}^{s+1} \right]}^2	\right] \\
&\leq \frac{1}{b_k^2} \mathbb{E}  \left[\sum_l	\norm{   \zeta_{k, i_l}^{s+1} }^2	\right] \\
&= \frac{1}{b_k^2} \mathbb{E} \left[ \sum_l \underbrace{\norm{ \grad \; \phi_{i_l}(x_k^{s+1}) - \Gamma_{\tilde{x}_s}^{x_k^{s+1}} \grad \; \phi_{i_l} (\tilde{x}^s)}^2}_{\leq L d(x_k^{s+1},\tilde{x}^s)} \right] \\
&\overset{(3)}{\leq} \frac{b_k L^2 d^2(x_k^{s+1},\tilde{x}^s)}{b_k^2} \; .
\end{align*}
For (2), recall that $ \zeta_{k, i_l}^{s+1} = \grad \phi_i (x_k) - \Gamma_{\tilde{x}^s}^{x_k^{s+1}} \grad \phi_i (\tilde{x}^s)$ and therefore $\mathbb{E}\left[  \zeta_{k, i_l}^{s+1} - \mathbb{E}\left[  \zeta_{k, i_l}^{s+1} \right]	\right] = 0$ for all $i_l \in I_k$. The inequality follows then from Remark~\ref{eq:xi-sum-squares}.  Inequality (3) follows from the assumption that the $\phi_i$ are $L$-Lipschitz smooth.  
This shows
\begin{align*}
\mathbb{E} \left[\norm{ \grad \Phi(x_k^{s+1}) - g_k (x_k^{s+1})}^2	\right] \leq \frac{L^2}{b_k} d^2(x_k^{s+1}, \tilde{x}^s) \; .
\end{align*}
Jensen's inequality gives
\begin{align*}
\mathbb{E} \left[\norm{ \grad \Phi(x_k^{s+1}) -  g_k (x_k^{s+1})}^2	\right]  \geq \mathbb{E} \left[\norm{ \grad \Phi(x_k^{s+1}) -  g_k (x_k^{s+1})}	\right]^2 \; ,
\end{align*}
and, putting everything together and taking the square root on both sides, the claim follows as
\begin{align*}
\mathbb{E} \left[\norm{ \grad \Phi(x_k^{s+1}) - g_k (x_k^{s+1})}	\right] \leq \frac{L}{\sqrt{b_k}} d(x_k^{s+1},\tilde{x}^s) \; .
\end{align*}
\end{proof}

\noindent Using Lemma~\ref{lem:VR} we can recover the following sublinear convergence rate:\\
\begin{theorem}\label{thm:conv-svrfw}
With steps size $\eta_k = \frac{1}{\sqrt{KS}}$ and constant batch sizes $b_k=K^2$, Algorithm~\ref{alg.svrfw} converges in expectation with $\mathbb{E}_{I_k} \left[\mathcal{G}(\hat{x})\right] =O \left(\frac{1}{\sqrt{KS}}\right)$. Here, $\mathcal{G}(x)$ again denotes the Frank-Wolfe gap as defined in Eq.~\ref{def:FW-gap}.\\
\end{theorem}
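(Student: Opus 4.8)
The plan is to mirror the structure of the proof of Theorem~\ref{thm:srfw}, but using the variance-reduced gradient bound from Lemma~\ref{lem:VR} in place of the crude $\nicefrac{C}{\sqrt{b}}$ bound from Lemma~\ref{lem:RM}. The key difference is that the approximation error now scales with $d(x_k^{s+1},\tilde{x}^s)$, the distance travelled within the current epoch, which must be controlled. Concretely, I would start from the smoothness descent inequality (Lemma~\ref{A.smooth}) applied to the step $x_k^{s+1} \to x_{k+1}^{s+1}$, add and subtract $g_k(x_k^{s+1})$ inside the inner product, and use Cauchy--Schwarz together with $\norm{\Exp_{x_k^{s+1}}^{-1}(z_{k+1}^{s+1})} \le D$ to obtain
\begin{align*}
\Phi(x_{k+1}^{s+1}) \le \Phi(x_k^{s+1}) + \eta_k \ip{g_k(x_k^{s+1})}{\Exp_{x_k^{s+1}}^{-1}(z_{k+1}^{s+1})} + \eta_k D\,\norm{\grad\Phi(x_k^{s+1}) - g_k(x_k^{s+1})} + \tfrac12 M_\Phi \eta_k^2.
\end{align*}

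Next I would take expectations and invoke Lemma~\ref{lem:VR} to replace the error term by $\eta_k D \tfrac{L}{\sqrt{b_k}} d(x_k^{s+1},\tilde{x}^s)$, and rewrite the second term as $-\eta_k \E[\hat{\mathcal{G}}(x_k^{s+1})]$ via the stochastic Frank--Wolfe gap. The plan is then to bound the epoch-internal distance $d(x_k^{s+1},\tilde{x}^s)$. Since $\tilde{x}^s = x_0^{s+1}$ (the epoch start) and each step moves along a geodesic of length at most $\eta_k D$, the triangle inequality gives $d(x_k^{s+1},\tilde{x}^s) \le \sum_{j<k}\eta_j D \le k\eta D \le K\eta D$ for constant step size; with $\eta = \nicefrac{1}{\sqrt{KS}}$ this distance is $O(\sqrt{K/S}\,D)$ and stays bounded by $\mathrm{diam}(\Xc)$. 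I would then sum over $k=1,\dots,K$ within an epoch and over $s=0,\dots,S-1$, telescoping the $\Phi$-differences across the full run.

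After telescoping and dividing by $\sum \eta_k = KS\eta$, the recurrence yields
\begin{align*}
\E[\mathcal{G}(\hat{x})] \le \frac{1}{KS\eta}\Bigl(\Phi(\tilde{x}^0) - \Phi(\hat{x})\Bigr) + \frac{DL}{\sqrt{b_k}}\,\bar{d} + \tfrac12 M_\Phi \eta,
\end{align*}
where $\bar d$ denotes the worst-case epoch-internal distance. Substituting $\eta = \nicefrac{1}{\sqrt{KS}}$ and the batch size $b_k = K^2$ makes the first term $O(\nicefrac{1}{\sqrt{KS}})$ and the last term $O(\nicefrac{1}{\sqrt{KS}})$ immediately; the middle term becomes $\tfrac{DL}{K}\bar d$, and using $\bar d \le D = \mathrm{diam}(\Xc)$ this is $O(\nicefrac{1}{K})$, which is dominated by $O(\nicefrac{1}{\sqrt{KS}})$ whenever $S \lesssim K$. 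Identifying $\hat{x}$ as a uniformly-random iterate so that $\E[\E[\hat{\mathcal{G}}(x_k^{s+1})]] = \E[\mathcal{G}(\hat{x})]$ (exactly as in Theorem~\ref{thm:srfw}) then gives the claimed rate.

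**The main obstacle** I expect is making the batch-size choice $b_k = K^2$ interact correctly with the distance bound so that the variance-reduction term genuinely contributes at the $O(\nicefrac{1}{\sqrt{KS}})$ level rather than dominating. The delicate point is that $\tilde{x}^{s+1}$ is reset to the \emph{end} of the previous epoch, so $d(x_k^{s+1},\tilde{x}^s)$ is an intra-epoch quantity that one must bound uniformly; I would lean on $d(x_k^{s+1},\tilde{x}^s)\le \mathrm{diam}(\Xc)$ as the clean, assumption-free bound, but the tighter $O(K\eta D)$ estimate is what actually powers the variance reduction and justifies the larger batch size $b_k = K^2$ (versus $b_k = K$ for \srfw). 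Reconciling these two bounds and confirming that the output choice $\hat{x} = \tilde{x}_K^S$ in the algorithm is compatible with the random-iterate averaging used in the gap argument is the one place where care is needed.
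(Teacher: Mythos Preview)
Your proposal is correct and follows the same skeleton as the paper's proof: start from Lemma~\ref{A.smooth}, add and subtract $g_k$, apply Cauchy--Schwarz with the diameter bound, invoke Lemma~\ref{lem:VR}, telescope within and across epochs, and substitute $\eta=1/\sqrt{KS}$, $b=K^2$. The one organizational difference is how the intra-epoch distance $d(x_k^{s+1},\tilde{x}^s)$ is handled. You bound it directly via the triangle inequality along the trajectory, $d(x_k^{s+1},\tilde{x}^s)\le\sum_{j<k}\eta_j D\le k\eta D$. The paper instead packages this into a Lyapunov quantity $R_k:=\E[\Phi(x_k^{s+1})+c_k\,d(x_k^{s+1},\tilde{x}^s)]$ with $c_k=c_{k+1}+\eta D L/\sqrt{b}$ and $c_K=0$, then shows $R_{k+1}\le R_k-\eta\E[\hat{\mathcal G}]+c_{k+1}\eta D+\tfrac12 M_\Phi\eta^2$ and telescopes. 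Unwinding the recursion gives $\sum_k\eta c_{k+1}D=\tfrac{\eta^2 D^2 L}{\sqrt{b}}\tfrac{K(K-1)}{2}$, which is exactly the same triangular sum you would get by plugging $d(x_k^{s+1},\tilde{x}^s)\le k\eta D$ into your displayed inequality. So the two routes are equivalent; yours is more elementary, the paper's is a bit more structured.

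One small gap to close: in your final computation you retreat to the crude bound $\bar d\le D$, which yields only $O(1/K)$ for the middle term and forces the extraneous restriction $S\lesssim K$. To get the theorem as stated you must actually use the sharper bound $d(x_k^{s+1},\tilde{x}^s)\le k\eta D$ that you already derived: summing $\sum_{k}\eta\cdot\tfrac{DL}{\sqrt b}\cdot k\eta D$ over an epoch gives $\tfrac{\eta^2 D^2 L}{\sqrt b}\cdot\tfrac{K(K+1)}{2}$, and after summing over $S$ epochs and dividing by $KS\eta$ this is $\tfrac{\eta D^2 L K}{2\sqrt b}=\tfrac{D^2 L}{2\sqrt{KS}}$ once $b=K^2$ and $\eta=1/\sqrt{KS}$ are inserted---exactly the $O(1/\sqrt{KS})$ contribution the paper obtains, with no constraint on the relative sizes of $S$ and $K$. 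Your concern about the output rule $\hat x=\tilde x_K^S$ versus uniform-random-iterate averaging is legitimate; the paper glosses over the same point and implicitly treats $\hat x$ as a uniformly chosen iterate when passing from $\sum_{s,k}\eta\E[\hat{\mathcal G}(x_k^{s+1})]$ to $KS\eta\,\E[\mathcal G(\hat x)]$.
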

\begin{proof}(Theorem~\ref{thm:conv-svrfw})
Let again
\begin{align}
g_k (x_k^{s+1}) = \frac{1}{b_k} \sum_j \grad \phi_j (x_k^{s+1}) - \Gamma_{\tilde{x}^s}^{x_k^{s+1}} \left( \grad \phi_j (\tilde{x}^s) - \grad \Phi(\tilde{x}^s)			\right)
\end{align}
denote the variance-reduced gradient estimate in the $k^{th}$ iteration  of the $s^{th}$ epoch. Then
\begin{align}\label{eq:svrfw-1}
\Phi(x_{k+1}^{s+1}) &\overset{(1)}{\leq} \Phi(x_k^{s+1}) + \eta_k \ip{\grad \; \Phi(x_k^{s+1})}{\Exp_{x_k^{s+1}}^{-1}(y_k)} + \frac{1}{2} M_{\Phi} \eta_k^2 \\
&\overset{(2)}{\leq} \Phi(x_k^{s+1}) + \eta_k \ip{g_k(x_k^{s+1})}{\Exp_{x_k^{s+1}}^{-1}(y_k)} \\
& \qquad+ \eta_k \ip{\grad \; \Phi(x_k^{s+1}) - g_k(x_k^{s+1})}{\Exp_{x_k^{s+1}}^{-1}(y_k)} + \frac{1}{2} M_{\Phi} \eta_k^2 \; 
\end{align}
Here, (1) follows from Lemma~\ref{A.smooth} and (2) from ``adding a zero" with respect to $g_k$. 
We then apply Cauchy-Schwartz to the inner product and make use of the fact that the geodesic distance between points in $\Xc$ is bounded by its diameter:
\begin{align}
\ip{\grad \; \Phi(x_k^{s+1}) - g_k(x_k^{s+1})}{\Exp_{x_k^{s+1}}^{-1}(y_k)} \leq \norm{\grad \; \Phi(x_k^{s+1}) - g_k(x_k^{s+1})} \cdot \underbrace{\norm{\Exp_{x_k^{s+1}}^{-1}(y_k)}}_{\leq {\rm diam}(\Xc)} \; .
\end{align}
This gives (with $D := {\rm diam}(\Xc)$)
\begin{align*}
\Phi(x_{k+1}^{s+1}) &\leq \Phi(x_k^{s+1}) + \eta_k \ip{g_k(x_k^{s+1})}{\Exp_{x_k^{s+1}}^{-1}(y_k)} + \eta_k D \norm{\grad \; \Phi(x_k^{s+1}) - g_k(x_k^{s+1})} + \frac{1}{2} M_{\Phi} \eta_k^2 \; .
\end{align*}
Taking expectations, we have
\begin{align}\label{eq:3-3-exp}
\E \left[\Phi(x_{k+1}^{s+1}) \right] &\leq \E \left[ \Phi(x_k^{s+1}) \right] + \eta_k \E \left[ \ip{g_k(x_k^{s+1})}{\Exp_{x_k^{s+1}}^{-1}(y_k)} \right]\\ &\quad+ \eta_k D \E \left[ \norm{\grad \; \Phi(x_k^{s+1}) - g_k(x_k^{s+1})} \right] + \frac{1}{2} M_{\Phi} \eta_k^2 \\
&\overset{(3)}{\leq}  \E \left[ \Phi(x_k^{s+1}) \right]  - \eta_k \E \left[ \hat{\mathcal{G}}(x_k^{s+1})	\right] + \eta_k D \frac{L}{\sqrt{b_k}} \E \left[	d(x_k^{s+1}, \tilde{x}^s)	\right]  + \frac{1}{2} M_{\Phi} \eta_k^2 \; ,
\end{align}
where (3) follows from applying the definition of the stochastic Frank-Wolfe gap to the second term and Lemma~\ref{lem:VR} to the third term.  

For the following analysis,  define for $k = 1, \dots, K$ and a fixed epoch $s \in [S]$
\begin{align}
R_k &:= \E \left[ \Phi (x_k^{s+1})  + c_k d(x_k^{s+1}, \tilde{x}^s)\right] \\
c_k &= c_{k+1} + \eta_k D \frac{L}{\sqrt{b_k}} \qquad (c_K = 0) \; .
\end{align}
With that and inequality~\ref{eq:3-3-exp}, we have
\begin{align*}
R_{k+1} &= \E \left[	\Phi(x_{k+1}^{s+1})	\right] + c_{k+1} \E \left[	 d(x_{k+1}^{s+1}, \tilde{x}^s)	\right] \\
&\leq \E \left[ \Phi(x_k^{s+1})	\right] - \eta_k \E \left[ \hat{\mathcal{G}}(x_k^{s+1})	\right] + \eta_k D \frac{L}{\sqrt{b_k}} \E \left[	d(x_k^{s+1}, \tilde{x}^s)	\right] + \frac{1}{2} M_{\Phi} \eta_k^2 \\
&\quad+ c_{k+1} \underbrace{\E \left[	d(x_{k+1}^{s+1}, \tilde{x}^s)	\right]}_{\overset{(4)}{\leq} \E \left[	d(x_{k+1}^{s+1}, x_k^{s+1})	+ d(x_k^{s+1}, \tilde{x}^s) \right] } \\
&\leq \underbrace{\left( \E \left[	\Phi (x_k^{s+1})\right] + \underbrace{\left( c_{k+1} + \eta_k D \frac{L}{\sqrt{b_k}} \right)}_{=c_k}	\E \left[	d(x_k^{s+1}, \tilde{x}^s)	\right]	\right)}_{= R_k} - \eta_k \E \left[ \hat{\mathcal{G}}(x_k^{s+1})	\right] \\
&\quad+  c_{k+1} \E \left[	d(x_{k+1}^{s+1}, x_k^{s+1})	\right] + \frac{1}{2} M_{\Phi} \eta_k^2 \\
&\overset{(5)}{\leq} R_k - \eta_k \E \left[ \hat{\mathcal{G}}(x_k^{s+1})	\right] +  c_{k+1}\eta_k D + \frac{1}{2} M_{\Phi} \eta_k^2 \; .
\end{align*}
where (4) follows by adding a zero and applying the triangle-inequality and (5) from the definition of $R_k$ and the definition of the update step via the geodesic map $\gamma$ (see Algorithm~\ref{alg.svrfw})
\begin{align}
\E \left[	d(x_{k+1}^{s+1}, x_k^{s+1})	\right] \leq \eta_k \mathbb{E} \left[ \norm{\Exp_{x_k}^{-1}(z_k)}	\right]  \leq  \eta_{k} D \; .
\end{align}\label{eq:diam-eta}
Telescoping within the epoch $s+1$ we get (with $\eta_k = \eta$ and $b_k = b$ for $k=0, \dots, K-1$)
\begin{align*}
R_K &\leq R_0 - \sum_k \eta_k \E \left[ \hat{\mathcal{G}}(x_k^{s+1})	\right] + \frac{1}{2} M_{\Phi} \sum_k \eta_k^2 + D \sum_k \eta_k c_{k+1} \\
&= R_0 - \eta \sum_k \E \left[ \hat{\mathcal{G}}(x_k^{s+1})	\right] + \frac{1}{2} M_{\Phi} \eta^2 K + D \eta \sum_k c_{k+1} \\
&= R_0 - \eta \sum_k \E \left[ \hat{\mathcal{G}}(x_k^{s+1})	\right] + \frac{1}{2} M_{\Phi} \eta^2 K + \frac{\eta^2 D^2 L}{\sqrt{b}} \frac{K(K-1)}{2} \; .
\end{align*}
This gives
\begin{align*}
\E \left[	\Phi(x_K^{s+1})	\right] \leq \E \left[ \Phi(x_K^{s}) \right] - \eta \sum_k \E \left[ \hat{\mathcal{G}}(x_k^{s+1})	\right] + \frac{1}{2} M_{\Phi} \eta^2 K + \frac{\eta^2 D^2 L}{\sqrt{b}} \frac{K(K-1)}{2} \; .
\end{align*}
Finally, telescoping over all epochs $s=0, \dots, S-1$, we get
\begin{align*}
 \E \left[	\Phi(x_K^{S})	\right]
&\leq \E \left[	\Phi(x_0) \right] - \eta \sum_s \sum_k \E \left[	\hat{\mathcal{G}}(x_k^{s+1})	\right] + \frac{1}{2} M_{\Phi} \eta^2 K S + \frac{\eta^2 D LS}{\sqrt{b}} \frac{K(K-1)}{2} \; .
\end{align*}
Reordering terms and using the definition of the output in Algorithm~\ref{alg.svrfw} (and the fact that $\mathbb{E} \left[\mathbb{E} \left[	\hat{\mathcal{G}}(x_K^S)\right] \right]= \mathbb{E} \left[\mathcal{G}(\hat{x}) \right]$), this gives
\begin{align*}
KS \eta \E \left[	\mathcal{G}(\hat{x})\right] &\leq \Phi(x_0) - \E \left[ \Phi(x_K^S) \right] + \frac{1}{2} M_{\Phi} \eta^2 KS + \frac{\eta^2 D LS}{\sqrt{b}} \frac{K(K-1)}{2} \; ,
\end{align*}
from which the claim follows with $\eta = \frac{1}{\sqrt{KS}}$ and $b= K^2$ as
\begin{align*}
\E \left[ \mathcal{G}(\hat{x}) \right] \leq \frac{1}{\sqrt{KS}} \left(
C_{x_0} + \frac{1}{2} (M_{\Phi} + D^2 L) \right) \; ,
\end{align*}
where $C_{x_0}>0$ is an initialization-dependent constant, such that $C_{x_0} > \Phi(x_0) - \E \left[	\Phi(x^\star)\right] > \Phi(x_0) - \E \left[	\Phi(x_K^S)\right]$, where $x^\star$ is a first-order stationary point.
\end{proof}

Choosing a suitable minibatch size is critical to achieving a good performance with variance-reduced approaches, such as \svrfw. In Algorithm~\ref{alg.svrfw} this translates into a careful choice of $K$ with respect to $m$: If $K$ is too small,  the complexity of the algorithm may be dominated by the cost of recomputing the full gradient frequently. If $K$ is too large, than computing the gradient estimates will be expensive too. We propose to set $K = \lceil m^{1/3} \rceil$, following a convention in the Euclidean \fw literature. With that, we get the following complexity guarantees:
\begin{cor}
\svrfw with $K = \lceil m^{1/3} \rceil$ obtains an $\epsilon$-accurate solution with IFO complexity of $O \left(m + \frac{m^{2/3}}{\epsilon^2}	\right)$ and RLO complexity of $O \left(\frac{1}{\epsilon^2}	\right)$.
\end{cor}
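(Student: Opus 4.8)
The plan is to combine the convergence rate from Theorem~\ref{thm:conv-svrfw} with a careful accounting of the oracle calls incurred per epoch. First I would translate the rate $\mathbb{E}[\mathcal{G}(\hat{x})] = O(1/\sqrt{KS})$ into an iteration requirement: to reach an $\epsilon$-accurate solution it suffices that $1/\sqrt{KS} \lesssim \epsilon$, i.e.\ the total number of inner iterations satisfies $KS = \Theta(1/\epsilon^2)$. Everything else follows by counting how many SFO/IFO and RLO calls each inner iteration and each epoch costs under the prescribed parameter choices $\eta_k = 1/\sqrt{KS}$, $b_k = K^2$, and $K = \lceil m^{1/3}\rceil$.

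For the RLO complexity I would observe that Algorithm~\ref{alg.svrfw} issues exactly one ``linear'' oracle call per inner iteration (the $\argmin$ defining $z_{k+1}^{s+1}$) and none in the full-gradient step. Hence the total number of RLO calls equals the total number of inner iterations, $KS = O(1/\epsilon^2)$, which gives the stated RLO bound directly.

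The IFO count requires more care, and this bookkeeping is the main (though not analytically deep) obstacle. Within each epoch the algorithm spends $m$ IFO calls to form the full gradient $\grad\Phi(\tilde{x}^s)$, and then runs $K$ inner iterations; each inner iteration evaluates the variance-reduced estimate $g_k(x_k^{s+1})$, which touches $b_k$ component functions at \emph{both} $x_k^{s+1}$ and the anchor $\tilde{x}^s$, costing $O(b_k) = O(K^2)$ IFO calls. The per-epoch cost is therefore $O(m + K\cdot K^2) = O(m + K^3)$. The key step is the choice $K = \lceil m^{1/3}\rceil$, which balances the two contributions by forcing $K^3 = O(m)$, so that each epoch costs $O(m)$ IFO calls.

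Finally I would aggregate over the $S$ epochs. Since $S = (KS)/K = \Theta\bigl(1/(K\epsilon^2)\bigr) = \Theta\bigl(1/(m^{1/3}\epsilon^2)\bigr)$, the total IFO count is $O(Sm) = O\bigl(m^{2/3}/\epsilon^2\bigr)$. Adding the unavoidable cost of at least one full-gradient computation (the regime $S=1$, which occurs when $1/\epsilon^2 \le K$) yields the advertised $O\bigl(m + m^{2/3}/\epsilon^2\bigr)$. I do not anticipate any analytic difficulty; the only points that need to be gotten right are the factor of two from computing component gradients at both $\tilde{x}^s$ and $x_k^{s+1}$ in the variance-reduced estimate, and the correct substitution of $S$ and $K$ in terms of $m$ and $\epsilon$.
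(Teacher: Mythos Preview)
Your proposal is correct and follows essentially the same approach as the paper: derive $KS = O(1/\epsilon^2)$ from Theorem~\ref{thm:conv-svrfw}, count one RLO call per inner iteration, and tally IFO calls as $S(m + K\cdot b_k) = S(m + K^3)$ with the balancing choice $K = \lceil m^{1/3}\rceil$. Your bookkeeping is in fact slightly more explicit than the paper's (you spell out $S = \Theta(1/(m^{1/3}\epsilon^2))$ and the origin of the additive $m$ term), but the argument is the same.
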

\begin{proof}
It follows directly from Theorem~\ref{thm:conv-svrfw} that \svrfw has an LO complexity of $O \left(\frac{1}{\epsilon^2}	\right)$. For the IFO complexity, note that
\begin{align*}
\sum_{s=0}^{S-1} \left( m + \sum_{k=1}^{K-1} b_k \right) = \sum_{s=0}^{S-1} \left( m + K b \right) \lesssim O \left( m + \frac{K^2}{\epsilon^2} \right) = O \left(m + \frac{m^{2/3}}{\epsilon^2} \right) \; ,
\end{align*}
where the last equality follows from setting $K = \lceil m^{1/3} \rceil$.
\end{proof}

\noindent Analogously to \srfw, \svrfw converges sublinearly to the global optimum, if the objective is g-convex. As before, we use $\Delta_k = \Phi(x_k)-\Phi(x^*)$.
\begin{cor}\label{cor:svrfw}
If $\Phi$ is g-convex, then in the setting of Theorem~\ref{thm:conv-svrfw} the optimality gap converges as $\mathbb{E}_{I_k} \left[\Delta_{k} \right] = O(1/\sqrt{KS})$.
\end{cor}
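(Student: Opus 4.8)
The plan is to reproduce the g-convexity argument of Corollary~\ref{cor:srfw} inside the variance-reduced framework of Theorem~\ref{thm:conv-svrfw}. The key observation is that the entire derivation of Theorem~\ref{thm:conv-svrfw} carries over essentially verbatim, with the only change being that the stationarity surrogate $\hat{\mathcal{G}}(x_k^{s+1})$ is replaced by the optimality gap $\Delta_k$. Concretely, I would show that the per-iteration bound becomes $\eta_k \E[\Delta_k] \le \E[\Phi(x_k^{s+1}) - \Phi(x_{k+1}^{s+1})] + \eta_k D \frac{L}{\sqrt{b_k}}\E[d(x_k^{s+1}, \tilde{x}^s)] + \frac{1}{2} M_{\Phi}\eta_k^2$, after which the same telescoping and parameter substitution used for the nonconvex rate deliver the claim.

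To obtain this recursion, I would start from the smoothness-plus-added-zero inequality already derived in the proof of Theorem~\ref{thm:conv-svrfw} (Eq.~\ref{eq:svrfw-1}) and exploit that $z_{k+1}^{s+1}$ is the minimizer of the linear oracle to replace $\Exp_{x_k^{s+1}}^{-1}(y_k)$ by $\Exp_{x_k^{s+1}}^{-1}(x^*)$ in the term $\ip{g_k(x_k^{s+1})}{\cdot}$, which only enlarges the right-hand side; this is exactly step (1) of Corollary~\ref{cor:srfw}. I would then apply Cauchy--Schwarz to the residual $\ip{\grad\Phi(x_k^{s+1}) - g_k(x_k^{s+1})}{\Exp_{x_k^{s+1}}^{-1}(y_k)}$, bound the tangent vector by $D = \mathrm{diam}(\Xc)$, take expectations, and invoke Lemma~\ref{lem:VR} to control the residual by $\frac{L}{\sqrt{b_k}}\E[d(x_k^{s+1}, \tilde{x}^s)]$. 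Finally, unbiasedness of $g_k$ gives $\E[\ip{g_k(x_k^{s+1})}{\Exp_{x_k^{s+1}}^{-1}(x^*)}] = \ip{\grad\Phi(x_k^{s+1})}{\Exp_{x_k^{s+1}}^{-1}(x^*)}$, and g-convexity (Eq.~\ref{eq:g-convex}) bounds this by $-\E[\Delta_k]$, producing the recursion above.

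The main obstacle --- and the only genuine difference from the purely stochastic Corollary~\ref{cor:srfw} --- is that the variance term now carries the drift distance $d(x_k^{s+1}, \tilde{x}^s)$ instead of a constant, so it does not telescope directly. I would dispose of it exactly as in Theorem~\ref{thm:conv-svrfw}, reusing the potential $R_k = \E[\Phi(x_k^{s+1}) + c_k\, d(x_k^{s+1}, \tilde{x}^s)]$ with the offsets $c_k = c_{k+1} + \eta_k D \frac{L}{\sqrt{b_k}}$ and $c_K = 0$; equivalently, since each step satisfies $d(x_{k+1}^{s+1}, x_k^{s+1}) \le \eta_k D$, the triangle inequality gives $d(x_k^{s+1}, \tilde{x}^s) \le k\eta D$ within an epoch, so $\sum_k \eta_k D \frac{L}{\sqrt{b_k}}\, d(x_k^{s+1}, \tilde{x}^s) \le \frac{\eta^2 D^2 L}{\sqrt{b}}\frac{K(K-1)}{2}$. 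Telescoping the gap differences $\Phi(x_k^{s+1}) - \Phi(x_{k+1}^{s+1})$ within each epoch and then over all $S$ epochs collapses them into $\Phi(x_0) - \E[\Phi(x_K^S)] \le C_{x_0}$, and the output rule converts the averaged per-step gaps into $\E[\Delta_{\hat{k}}]$. Substituting $\eta = \frac{1}{\sqrt{KS}}$ and $b = K^2$ makes each error term scale like $\frac{1}{\sqrt{KS}}$, yielding $\E[\Delta_{\hat{k}}] \le \frac{1}{\sqrt{KS}}\left(C_{x_0} + \frac{1}{2}(M_{\Phi} + D^2 L)\right)$, which is the asserted $O(1/\sqrt{KS})$ rate.
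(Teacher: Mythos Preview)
Your proposal is correct and follows exactly the approach the paper intends: it combines the g-convexity substitution of Corollary~\ref{cor:srfw} (replacing the oracle direction by $x^*$ and invoking Eq.~\ref{eq:g-convex}) with the variance-reduced potential/telescoping machinery of Theorem~\ref{thm:conv-svrfw} to absorb the drift term $d(x_k^{s+1},\tilde{x}^s)$. The paper itself does not spell out the argument and merely states that the proof is ``very similar to that of Corollary~\ref{cor:srfw}'', so your write-up is in fact more detailed than what appears there.
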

\noindent The proofs are very similar to that of Corollary~\ref{cor:srfw}.\\

A significant shortcoming of the semi-stochastic approach is the need for repeated computation of the full gradient which limits its scalability. In the following section, we introduce an improved version that circumvents these costly computations.

\subsection{Improved gradient estimation with \textsc{Spider}}
\begin{algorithm*}[ht]
\begin{small}
  \caption{\spiderfw}
  \label{alg.spider}
  \begin{algorithmic}[1] 
    \State Initialize $x_0 \in \Xc$, number of iterations $K$, size of epochs $n$. Assume access to $\gamma: [0,1] \rightarrow \mathcal{M}$. 
     \For {$k=0,1,\dots K-1$}
      \If {${\rm mod}(k,n)=0$}
      	\State Sample i.i.d. $S_1 = \lbrace \xi_{1} , ..., \xi_{\vert S_1 \vert}\rbrace$ (for \srfw) or $S_1 = \left( i_1, \dots, i_{\vert S_1 \vert} \right)$ (for \svrfw) with predefined $\vert S_1 \vert$.
      	\State Compute gradient $g_k \gets \grad \Phi_{S_1}(x_k)$.
      \Else
      	\State $\vert S_2 \vert \gets \lceil  \min \lbrace	 m, \frac{2nL^2 \norm{\Exp_{x_{k-1}}^{-1}(x_k)}}{\epsilon^2}	\rbrace  \rceil$
     	\State Sample i.i.d. $S_2 = \lbrace \xi_{1} , ..., \xi_{\vert S_2 \vert}\rbrace$ (for \srfw) or  $S_2 = \left( i_1, \dots, i_{\vert S_2 \vert} \right)$ (for \svrfw).
		\State  Compute gradient $g_k \gets \grad \Phi_{S_2}(x_k) - \Gamma_{x_{k-1}}^{x_k} \left(	\grad \Phi_{S_2}(x_{k-1}) - g_{k-1}	\right)$.
	\EndIf
	\State $z_{k+1} \gets \argmin_{z \in \Xc} \ip{g_k}{\Exp_{x_k}^{-1}(z)}$.
    \State $x_{k+1} \gets \gamma(\eta_k)$, where $\gamma(0)=x_{k}$ and $\gamma(1)=z_{k+1}$.
    \EndFor
    \State Output $\hat{x}$ chosen uniformly at random from $\lbrace x^k	\rbrace_{k=0}^{K-1}$.
   \end{algorithmic}
   \end{small}
\end{algorithm*}
Recently, \citet{sarah} and \citet{spider} introduced \spider (also known as \textsc{Sarah})
as an efficient way of estimating the (Euclidean) gradient in stochastic optimization tasks. Based on the idea of variance-reduction, the algorithm iterates between gradient estimates with different sample size. In particular, it recomputes the gradient at the beginning of each epoch with a larger (constant) batch size; the smaller batch sizes within epochs decrease as we move closer to the optimum.  This technique was studied for Riemannian Gradient Descent in~\citep{rspider1} and~\citep{rspider2}. In the following, we will introduce an improved variance-reduced \textsc{Stochastic} \rfw using \spider. 
Let
\begin{equation}
\grad \Phi_{S}(x) = 
\begin{cases}
\frac{1}{\vert S \vert} \sum_{i=1}^{\vert S \vert} \grad \phi(x,\xi_i), &{\rm stochastic} \\
\frac{1}{\vert S \vert} \sum_{i=1}^{\vert S \vert} \grad \phi_i (x), &{\rm finite-sum}
\end{cases}
\end{equation}
denote the gradient estimate with respect to a sample $S=\lbrace \xi_1, \dots, \xi_{\vert S \vert} \rbrace$ (for stochastic objectives) or $S=\left(i_1, \dots, i_{\vert S \vert}	\right)$ (for objectives with finite sum form).
Furthermore,  we make the following parameter choice ($K$ denoting the number of iterations): 
\begin{align}\label{params:spider}
 \eta&=\frac{1}{\sqrt{K}} \; \; {\rm (step \; size)} \\ 
 n&=\sqrt{K}=\frac{1}{\epsilon} \; \; (\# \; {\rm epochs}) \\
  \vert S_1 \vert &=
  \begin{cases}
   \frac{2C^2}{\epsilon^2}, &{\rm stochastic} \\
   \frac{2L^2D^2}{\epsilon^2}, &{\rm finite-sum}
  \end{cases}
\end{align}
Here, $\epsilon$ characterizes the goodness of the gradient estimate. 
$\vert S_2 \vert$ is recomputed in each iteration as given in Algorithm~\ref{alg.spider}. Note that here $m$ is determined by the number of terms in the finite-sum approximation or we set $m=\infty$ in the  stochastic case. 

We start by analyzing the goodness of the \spider gradient estimate $g_k$,  which is central to our convergence analysis. For ${\rm mod}(k,n) = 0$ an upper bound is given by Lemmas~\ref{lem:RM} and~\ref{lem:VR}.  The critical part is to analyze the case ${\rm mod}(k,n) \neq 0$. Let $\mathcal{F}_k$ be the sigma-field generated by the $x_k$. First, we show that the differences $\left(g_k - \grad \Phi(x_k)\right)_k$ form a martingale with respect to $(\mathcal{F}_k)_k$ (Lemma~\ref{eq:approx-martingale}). Then, using a classical property of $L^2$-martingales (Remark~\ref{rmk:martingale-prop}), we can prove the following bound on the approximation error:
\begin{lem}[Goodness of \spider-approximation]\normalfont
\label{lem.spider}
The expected deviation of the estimate $g_k$ from the true gradient $\grad \Phi$ as defined in Algorithm~\ref{alg.spider} (${\rm mod}(k,n) \neq 0$) is bounded as $\mathbb{E} \left[ \norm{g_k	- \grad \Phi(x_k)} \vert \mathcal{F}_k \right] \leq \epsilon$.
\end{lem}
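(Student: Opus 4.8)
The plan is to follow the standard \spider template, adapted so that the Riemannian transport structure is respected throughout. Write $v_j := g_j - \grad\Phi(x_j) \in T_{x_j}\mathcal{M}$ for the gradient-estimation error at step $j$, and let $k_0 := n\lfloor k/n\rfloor$ denote the start of the current epoch, so that $g_{k_0} = \grad\Phi_{S_1}(x_{k_0})$ is the large-batch estimate and, for $k_0 < j \le k$, the recursion of Algorithm~\ref{alg.spider} reads $g_j = \Gamma_{x_{j-1}}^{x_j} g_{j-1} + \bigl(\grad\Phi_{S_2}(x_j) - \Gamma_{x_{j-1}}^{x_j}\grad\Phi_{S_2}(x_{j-1})\bigr)$. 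Subtracting $\grad\Phi(x_j)$ and inserting $\pm\,\Gamma_{x_{j-1}}^{x_j}\grad\Phi(x_{j-1})$ gives the transported error recursion $v_j = \Gamma_{x_{j-1}}^{x_j} v_{j-1} + u_j$, where $u_j := \bigl(\grad\Phi_{S_2}(x_j) - \grad\Phi(x_j)\bigr) - \Gamma_{x_{j-1}}^{x_j}\bigl(\grad\Phi_{S_2}(x_{j-1}) - \grad\Phi(x_{j-1})\bigr)$ is the fresh noise introduced at step $j$.

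Next I would establish the martingale property (this is the content referenced as Lemma~\ref{eq:approx-martingale}): since $S_2$ is sampled i.i.d.\ and $\grad\Phi_{S_2}$ is an unbiased estimator of $\grad\Phi$ at both $x_j$ and $x_{j-1}$, conditioning on the history up to (but excluding) the step-$j$ sample yields $\mathbb{E}[u_j \mid \mathcal{F}_j] = 0$, so $(v_j)$ is a martingale once the errors are compared in a common tangent space. The structural fact that makes this work is that parallel transport is an isometry, so that $\norm{\Gamma_{x_{j-1}}^{x_j} v_{j-1}} = \norm{v_{j-1}}$ and the cross term $\mathbb{E}\,\ip{\Gamma_{x_{j-1}}^{x_j} v_{j-1}}{u_j}$ vanishes ($\Gamma_{x_{j-1}}^{x_j} v_{j-1}$ is measurable with respect to the past). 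This is exactly the $L^2$-orthogonality of increments recorded in Remark~\ref{rmk:martingale-prop}, and it telescopes to $\mathbb{E}\bigl[\norm{v_k}^2\bigr] = \mathbb{E}\bigl[\norm{v_{k_0}}^2\bigr] + \sum_{j=k_0+1}^{k}\mathbb{E}\bigl[\norm{u_j}^2\bigr]$.

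It then remains to bound the two pieces. The epoch-start term is controlled by the squared-form variance bounds underlying Lemma~\ref{lem:RM} (stochastic case) or Lemma~\ref{lem:VR} (finite-sum case), giving $\mathbb{E}\norm{v_{k_0}}^2 \le C^2/\lvert S_1\rvert$ (resp.\ $L^2 D^2/\lvert S_1\rvert$), so the prescribed $\lvert S_1\rvert = 2C^2/\epsilon^2$ (resp.\ $2L^2D^2/\epsilon^2$) makes it at most $\epsilon^2/2$. For each increment, $u_j = \tfrac{1}{\lvert S_2\rvert}\sum_i (w_i - \mathbb{E} w_i)$ with $w_i := \grad\phi(x_j,\xi_i) - \Gamma_{x_{j-1}}^{x_j}\grad\phi(x_{j-1},\xi_i)$; applying Remark~\ref{eq:xi-sum-squares} exactly as in the proof of Lemma~\ref{lem:VR}, together with the $L$-smoothness estimate $\norm{w_i} \le L\,d(x_j,x_{j-1})$ from~\eqref{eq:L}, gives $\mathbb{E}[\norm{u_j}^2 \mid \mathcal{F}_j] \le \tfrac{L^2}{\lvert S_2\rvert}\,d^2(x_j,x_{j-1}) = \tfrac{L^2}{\lvert S_2\rvert}\norm{\Exp_{x_{j-1}}^{-1}(x_j)}^2$. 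The adaptive batch size $\lvert S_2\rvert$ of Algorithm~\ref{alg.spider} is calibrated precisely so that each such increment is at most $\epsilon^2/(2n)$; since an epoch spans at most $n$ steps, the sum contributes at most $\epsilon^2/2$. Adding the two halves gives $\mathbb{E}\bigl[\norm{v_k}^2 \mid \mathcal{F}_k\bigr] \le \epsilon^2$, and Jensen's inequality (as in Lemmas~\ref{lem:RM} and~\ref{lem:VR}) delivers the stated $\mathbb{E}[\norm{g_k - \grad\Phi(x_k)} \mid \mathcal{F}_k] \le \epsilon$.

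The main obstacle will be the disciplined bookkeeping of parallel transport: every error must be referred to the appropriate tangent space before the isometry property is invoked, since that property is what simultaneously preserves norms across steps and annihilates the martingale cross terms. A secondary subtlety is the role of the conditioning: the per-step variance bound hinges on $\norm{\Exp_{x_{j-1}}^{-1}(x_j)}$ being measurable at the moment $\lvert S_2\rvert$ is chosen, so that the adaptive batch size cancels the variance \emph{deterministically} rather than merely on average, which is what lets the telescoped increment sum be bounded uniformly by $\epsilon^2/2$ over the whole epoch.
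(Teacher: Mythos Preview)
Your proposal is correct and follows essentially the same route as the paper: the martingale decomposition of the transported error $v_j = \Gamma_{x_{j-1}}^{x_j}v_{j-1} + u_j$, the $L^2$-orthogonality of increments (Remark~\ref{rmk:martingale-prop}) to telescope $\mathbb{E}\norm{v_k}^2$ back to the epoch start, the bound $\epsilon^2/2$ on the epoch-start term via the choice of $\lvert S_1\rvert$, the per-step increment bound via $L$-smoothness and the adaptive $\lvert S_2\rvert$, and the final passage to the $L^1$ bound via Jensen. Your presentation is in fact somewhat more explicit than the paper's about the role of the isometry of parallel transport in both preserving $\norm{v_{j-1}}$ and killing the cross term, which is a helpful clarification rather than a departure.
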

\noindent We first show that the differences form a martingale:
\begin{lem}\label{eq:approx-martingale}
The differences of the gradient estimates $g_k$ from the true gradients $\grad \Phi$, i.e., \\ $\left( g_k - \grad \Phi(x_k)	\right)_k$, form a martingale with respect to the filtration $\left( \mathcal{F}_k \right)_k$. 
\end{lem}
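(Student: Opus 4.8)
The plan is to verify the defining martingale identity directly from the recursive \spider update, taking care that the difference $D_k := g_k - \grad\Phi(x_k)$ lives in the tangent space $T_{x_k}\Mc$, so that consecutive terms must be compared via parallel transport. Concretely, I would establish
\begin{align*}
\mathbb{E}\left[\, g_{k+1} - \grad\Phi(x_{k+1}) \,\middle|\, \mathcal{F}_k \right] = \Gamma_{x_k}^{x_{k+1}}\bigl( g_k - \grad\Phi(x_k) \bigr),
\end{align*}
which is exactly the martingale property once the tangent vectors along the iterate path are identified by parallel transport.

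First I would record the measurability structure, reading $\mathcal{F}_k$ as the $\sigma$-field encoding all sampling randomness through step $k$, so that both $x_k$ and $g_k$ are $\mathcal{F}_k$-measurable. Crucially, $x_{k+1}$ is then also $\mathcal{F}_k$-measurable, since the linear oracle $z_{k+1} = \argmin_{z\in\Xc}\ip{g_k}{\Exp_{x_k}^{-1}(z)}$ and the update $x_{k+1} = \gamma(\eta_k)$ depend only on $x_k$ and $g_k$. Hence, conditionally on $\mathcal{F}_k$, the sole remaining randomness in $g_{k+1}$ is the freshly drawn minibatch $S_2$ at step $k+1$, and the transport operator $\Gamma_{x_k}^{x_{k+1}}$ is a deterministic linear map.

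Next I would substitute the \spider recursion $g_{k+1} = \grad\Phi_{S_2}(x_{k+1}) - \Gamma_{x_k}^{x_{k+1}}\bigl(\grad\Phi_{S_2}(x_k) - g_k\bigr)$ and take the conditional expectation. By linearity, by the fact that $\Gamma_{x_k}^{x_{k+1}}$ is linear and $\mathcal{F}_k$-measurable, and by unbiasedness of the minibatch estimator---so that $\mathbb{E}[\grad\Phi_{S_2}(x_{k+1})\mid\mathcal{F}_k] = \grad\Phi(x_{k+1})$ and $\mathbb{E}[\grad\Phi_{S_2}(x_k)\mid\mathcal{F}_k] = \grad\Phi(x_k)$---one obtains $\mathbb{E}[g_{k+1}\mid\mathcal{F}_k] = \grad\Phi(x_{k+1}) - \Gamma_{x_k}^{x_{k+1}}(\grad\Phi(x_k) - g_k)$. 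Subtracting $\grad\Phi(x_{k+1})$ to form $D_{k+1}$ cancels that term and leaves precisely $\mathbb{E}[D_{k+1}\mid\mathcal{F}_k] = \Gamma_{x_k}^{x_{k+1}} D_k$. To make the word ``martingale'' literally correct I would then pull everything back to the fixed space $T_{x_0}\Mc$: writing $P_k := \Gamma_{x_{k-1}}^{x_k}\circ\cdots\circ\Gamma_{x_0}^{x_1}$ and $\hat{D}_k := P_k^{-1} D_k$, the identity $P_{k+1} = \Gamma_{x_k}^{x_{k+1}} P_k$ converts the transported relation into the ordinary vector-valued martingale property $\mathbb{E}[\hat{D}_{k+1}\mid\mathcal{F}_k] = \hat{D}_k$.

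The main point requiring care is the bookkeeping of tangent spaces: the genuinely Euclidean martingale identity holds only after parallel transport, and one must confirm that reusing the \emph{same} minibatch $S_2$ to evaluate the gradient at both $x_k$ and $x_{k+1}$ does not spoil unbiasedness. It does not, because unbiasedness is a per-sample, pointwise property that survives the linear combination regardless of the sample being shared; the correlation between the two evaluations affects only the variance, which is addressed separately in Lemma~\ref{lem.spider}. Everything else is routine once the transport is handled correctly.
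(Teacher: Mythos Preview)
Your proposal is correct and follows essentially the same route as the paper: substitute the \spider recursion for $g_{k+1}$, invoke unbiasedness of the minibatch estimator conditionally on $\mathcal{F}_k$, and read off the transported identity $\mathbb{E}[D_{k+1}\mid\mathcal{F}_k]=\Gamma_{x_k}^{x_{k+1}}D_k$. Your treatment is in fact more careful than the paper's---you make explicit that $x_{k+1}$ (hence the transport operator) is $\mathcal{F}_k$-measurable, and you pull everything back to the fixed space $T_{x_0}\Mc$ via the accumulated transport $P_k$ to obtain a bona fide vector-valued martingale, whereas the paper's displayed conclusion $\Gamma_{x_{k-1}}^{x_k} g_{k-1} - \grad\Phi(x_{k-1})$ leaves the tangent-space reconciliation implicit.
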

\begin{proof}
\begin{align*}
\mathbb{E}\left[ g_k - \grad \Phi(x_k) \vert \mathcal{F}_k \right] 
&= \mathbb{E} \left[ \grad \Phi_{S_2} (x_k) - \Gamma_{x_{k-1}}^{x_k} \left(\grad \Phi_{S_2}(x_{k-1})  - g_{k-1}\right) - \grad \Phi(x_k) \vert \mathcal{F}_k \right] \\
&= \underbrace{\mathbb{E} \left[ \grad \Phi_{S_2}(x_k) - \grad \Phi(x_k) \vert	\mathcal{F}_k	\right] }_{=0}
+ \mathbb{E}\left[	 \Gamma_{x_{k-1}}^{x_k} g_{k-1} - \grad \Phi_{S_2}(x_{k-1})	 \vert \mathcal{F}_k \right] \\
&\overset{*}{=} \Gamma_{x_{k-1}}^{x_k} g_{k-1} - \grad \Phi (x_{k-1}) \; ,
\end{align*}
where (*) follows from $\mathbb{E} \left[ \grad \Phi_{S_2}(x_k)	\vert \mathcal{F}_k \right] = \grad \Phi (x_k)$, since $ \grad \Phi_{S_2}(x_k)$ is assumed to be an unbiased estimate.
\end{proof}
\begin{rmk}\label{rmk:martingale-prop}\normalfont
Let $M = (M_k)_k$ denote an $L^2$-martingale.  The orthogonality of increments, i.e.,
\begin{align*}
\ip{M_t - M_s}{M_v - M_u} = 0 \qquad (v \geq u \geq t \geq s) \; .
\end{align*}
implies that
\begin{align*}
\mathbb{E} \left[	M_k^2	\right] = \mathbb{E} \left[	M_{k-1}^2	\right] +  \mathbb{E} \left[	\left(M_{k} - M_{k-1} \right)^2 	\right] \; .
\end{align*}
Therefore, we have recursively
\begin{align*}
\mathbb{E} \left[	M_k^2	\right] = \mathbb{E} \left[	M_0^2	\right] + \sum_{i=1}^k \mathbb{E} \left[	\left(M_{i} - M_{i-1} \right)^2 	\right] \; .
\end{align*}
\end{rmk}

\noindent We can now prove Lemma~\ref{lem.spider}:\\
\begin{proof} (Lemma~\ref{lem.spider})
We consider two cases:
\begin{enumerate}
\item \mybox{${\rm mod}(m,k)=0$} 
For stochastic objectives, we have
\begin{align}\label{eq:lem.spider.1}
\mathbb{E} \left[ \norm{g_k - \grad \Phi(x_k)}^2	\vert \mathcal{F}_k \right] &= \mathbb{E} \left[	\norm{\grad \Phi_{S_1}(x_k) - \grad \Phi(x_k)}^2 \vert \mathcal{F}_k	\right] 
\overset{(1)}{\leq}  \frac{C^2}{\vert S_1 \vert} = \frac{C^2 \epsilon^2}{2C^2} = \frac{\epsilon^2}{2} \; ,
\end{align}
where (1) follows from Lemma~\ref{lem:RM}.  For objectives with finite-sum form, we have
\begin{align}\label{eq:lem.spider.1-fs}
\mathbb{E} \left[ \norm{\grad \Phi_{S_1}(x_k) - \grad \Phi(x_k)	}^2 \vert \mathcal{F}_k \right] &\overset{(2)}{\leq} \frac{L^2 D^2}{\vert S_1 \vert} = \frac{L^2 D^2 \epsilon^2}{2 L^2 D^2} = \frac{\epsilon^2}{2} \; ,
\end{align}
where (2) follows from Lemma~\ref{lem:VR}.  
\item \mybox{${\rm mod}(m,k) \neq 0$} 
We have
\begin{align*}
&\mathbb{E} \left[\norm{g_k - \grad \Phi(x_k)}^2 \vert \mathcal{F}_k \right]  \\
&\overset{(3)}{=} \mathbb{E} \left[ \norm{ \Gamma_{x_{k-1}}^{x_k} \left(g_{k-1} - \grad \Phi(x_{k-1}) \right)}^2 \vert \mathcal{F}_k \right]\\
&\quad+ \mathbb{E} \left[\norm{g_k - \grad \Phi(x_k) - \Gamma_{x_{k-1}}^{x_k} \left(g_{k-1}	- \grad \Phi (x_{k-1}) \right) }^2 \vert \mathcal{F}_k \right] \\
&\overset{(4)}{=} \mathbb{E} \left[  \norm{\Gamma_{x_{k-1}}^{x_k} \left( g_{k-1} - \grad \Phi(x_{k-1}) \right)}^2 \vert \mathcal{F}_k \right] \\
&\quad + \mathbb{E} \left[\vert \! \vert \grad \Phi_{S_2}(x_k) \right. \\
&\qquad \left. - \Gamma_{x_{k-1}}^{x_k} \left(	\grad \Phi_{S_2}(x_{k-1})  - g_{k-1}		\right)
- \grad \Phi(x_k) - \Gamma_{x_{k-1}}^{x_k} \left(  g_{k-1}	- \grad \Phi (x_{k-1})\right) \vert \! \vert^2 \vert \mathcal{F}_k \right]  \\
&= \mathbb{E} \left[  \norm{\Gamma_{x_{k-1}}^{x_k} \left( g_{k-1} - \grad \Phi(x_{k-1}) \right)}^2 \vert \mathcal{F}_k \right] \\
&\quad + \mathbb{E} \left[\norm{\grad \Phi_{S_2}(x_k) - \grad \Phi(x_k)
- \Gamma_{x_{k-1}}^{x_k} \left(	\grad \Phi_{S_2}(x_{k-1})  - \grad \Phi (x_{k-1})\right) }^2 \vert \mathcal{F}_k \right]  \; .
\end{align*}
In the chain of inequalities, (3) follows from Remark~\ref{rmk:martingale-prop} and (4) from substituting $g_k$ according to Algorithm~\ref{alg.spider}. 

In the following, we assume that $\Phi$ is a \emph{stochastic function}. Analogous arguments hold, if $\Phi$ has a finite-sum structure. We introduce the shorthand
\begin{align*}
\zeta_i = 
\grad \phi(x_k,\xi_i) - \grad \Phi(x_k) - \Gamma_{x_{k-1}}^{x_k} \left(	\grad \phi(x_{k-1},\xi_i) - \grad \Phi(x_{k-1}) 		\right) \; .
\end{align*}
Then, we get for the second term
\begin{align*}
&\mathbb{E} \left[\norm{\grad \Phi_{S_2} (x_k) - \grad \Phi(x_k) - \Gamma_{x_{k-1}}^{x_k} \left( \grad \Phi_{S_2} (x_{k-1}) - \grad \Phi(x_{k-1})	\right) }^2 \vert \mathcal{F}_k \right] \\
&= \mathbb{E} \left[	\norm{\frac{1}{\vert S_2 \vert} \sum_{i=1}^{\vert S_2 \vert} \zeta_i	}^2 \vert \mathcal{F}_k	\right] 
= \frac{1}{\vert S_2 \vert^2}  \mathbb{E} \left[	\norm{\sum_{i=1}^{\vert S_2 \vert}	\zeta_i	}^2 \vert \mathcal{F}_k	\right] \\
&\overset{(5)}{\leq} \frac{1}{\vert S_2 \vert^2} \mathbb{E} \left[	\left( \sum_{i=1}^{\vert S_2 \vert} \norm{	\zeta_i	} \right)^2 \vert \mathcal{F}_k	\right] 
\overset{(6)}{=} \frac{1}{\vert S_2 \vert^2}  \mathbb{E} \left[	\sum_{i=1}^{\vert S_2 \vert} \norm{	\zeta_i	}^2 \vert \mathcal{F}_k	\right] \\
&= \frac{1}{\vert S_2 \vert^2} 	\sum_{i=1}^{\vert S_2 \vert} \mathbb{E} \left[ \norm{	\zeta_i	}^2 \vert \mathcal{F}_k	\right] 
\overset{(7)}{=} \frac{1}{\vert S_2 \vert} \mathbb{E} \left[ \norm{	\zeta_i	}^2 \vert \mathcal{F}_k	\right] \\
&= \frac{1}{\vert S_2 \vert} \mathbb{E} \left[  \norm{\grad \phi(x_k,\xi) - \grad \Phi(x_k) - \Gamma_{x_{k-1}}^{x_k} \left(	\grad \phi(x_{k-1},\xi \right) - \grad \Phi(x_{k-1})}^2 	\vert \mathcal{F}_k	\right] \; .
\end{align*}
where (5) follows from the triangle-inequality, (6) from $\mathbb{E}\left[	\zeta_i	\right]=0$,  see Equation~\ref{eq:xi-sum-squares}; and (7) from the $\zeta_i$ being i.i.d.  Note, that
\begin{align*}
 \mathbb{E} \left[  \grad \phi(x_k,\xi) \vert \mathcal{F}_k \right] &= \grad \Phi(x_k) \\
 \mathbb{E} \left[ \Gamma_{x_{k-1}}^{x_k} \grad \phi(x_{k-1},\xi )  \vert \mathcal{F}_k \right] &= \Gamma_{x_{k-1}}^{x_k} \grad \Phi(x_{k-1}) \; .
\end{align*}
With this, we have
\begin{align*}
&\mathbb{E} \left[  \norm{\grad \phi(x_k,\xi) - \grad \Phi(x_k) - \Gamma_{x_{k-1}}^{x_k} \left(	\grad \phi(x_{k-1},\xi \right) - \grad \Phi(x_{k-1}) }^2	\vert \mathcal{F}_k	\right] \\
&= \mathbb{E} \left[  \norm{\grad \phi(x_k,\xi) - \Gamma_{x_{k-1}}^{x_k} 	\grad \phi(x_{k-1},\xi)  - \underbrace{\left( \grad \Phi(x_k) - \Gamma_{x_{k-1}}^{x_k}  \grad \Phi(x_{k-1}) \right)}_{= \mathbb{E} \left[	\grad \phi(x_k, \xi) - \Gamma_{x_{k-1}}^{x_k} \grad \phi (x_k, \xi) \vert \mathcal{F}_k \right]}   }^2	\vert \mathcal{F}_k	\right] \\
&= \mathbb{E} \left[  \norm{\grad \phi(x_k,\xi) - \Gamma_{x_{k-1}}^{x_k} 	\grad \phi(x_{k-1},\xi)}^2 \right] - \underbrace{\norm{\mathbb{E} \left[	\grad \phi(x_k, \xi) - \Gamma_{x_{k-1}}^{x_k} \grad \phi (x_k, \xi) \vert \mathcal{F}_k	\right]}^2}_{\geq 0} \\
&\leq \mathbb{E} \left[  \norm{\grad \phi(x_k,\xi) - \Gamma_{x_{k-1}}^{x_k} 	\grad \phi(x_{k-1},\xi)}^2  \vert \mathcal{F}_k	\right] \; .
\end{align*}
In summary, we have for the second term 
\begin{align}
&\mathbb{E} \left[\norm{\grad \Phi_{S_2} (x_k) - \grad \Phi(x_k) - \Gamma_{x_{k-1}}^{x_k} \left( \grad \Phi_{S_2} (x_{k-1}) - \grad \Phi(x_{k-1})	\right) }^2 \vert \mathcal{F}_k \right] \\
&\leq \mathbb{E} \left[  \norm{\grad \phi(x_k,\xi) - \Gamma_{x_{k-1}}^{x_k} 	\grad \phi(x_{k-1},\xi)}^2  \vert \mathcal{F}_k	\right] \; .
\end{align}
Putting everything together, we get
\begin{align*}
&\mathbb{E} \left[\norm{g_k - \grad \Phi(x_k)}^2 \vert \mathcal{F}_k \right] \\
&\leq \mathbb{E} \left[  \norm{\Gamma_{x_{k-1}}^{x_k} \left( g_{k-1} - \grad \Phi(x_{k-1}) ]\right)}^2 \vert \mathcal{F}_k \right] 
+\frac{1}{\vert S_2 \vert} \mathbb{E} \left[  \norm{\grad \phi(x_k,\xi)  - \Gamma_{x_{k-1}}^{x_k} \left(	\grad \phi(x_{k-1},\xi \right)}^2 	\vert \mathcal{F}_k	\right]\\
&\overset{(8)}{\leq} \mathbb{E} \left[  \norm{\Gamma_{x_{k-1}}^{x_k} \left( g_{k-1} - \grad \Phi(x_{k-1})\right) }^2 \vert \mathcal{F}_k \right] +\frac{1}{\vert S_2 \vert} L^2  \norm{\Exp_{x_{k-1}}^{-1}(x_k)}	\vert \mathcal{F}_k \\
&\overset{(9)}{=} \mathbb{E} \left[  \norm{\Gamma_{x_{k-1}}^{x_k} \left(g_{k-1} - \grad \Phi(x_{k-1})\right)}^2 \vert \mathcal{F}_k \right] 
+\frac{\epsilon^2}{2m L^2 \norm{\Exp_{x_{k-1}}^{-1}(x_k)}} L^2  \norm{\Exp_{x_{k-1}}^{-1}(x_k)} \\
&= \mathbb{E} \left[  \norm{\Gamma_{x_{k-1}}^{x_k} \left(g_{k-1} - \grad \Phi(x_{k-1})\right)}^2 \vert \mathcal{F}_k \right] 
+ \frac{\epsilon^2}{2m}
\end{align*}
where (8) follows from $\phi$ being $L$-Lipschitz and (9) from the choice of $\vert S_2 \vert$ in Algorithm~\ref{alg.spider}. Recursively going back to the beginning of the epoch (see Remark~\ref{rmk:martingale-prop}), we get (with $k_0=\lfloor \frac{k}{m} \rfloor m$):
\begin{align*}
\mathbb{E} \left[\norm{g_k - \grad \Phi(x_k)}^2 \vert \mathcal{F}_k \right] \leq \underbrace{\mathbb{E} \left[\norm{g_{k_0} - \grad \Phi(x_{k_0})}^2 \vert \mathcal{F}_{k_0} \right]}_{\leq \frac{\epsilon^2}{2} \; {\rm Eq.(~\ref{eq:lem.spider.1})}} + m \frac{\epsilon^2}{2m} \leq \epsilon^2.  
\end{align*}
With Jensen's inequality, we have
\begin{align*}
\left(\mathbb{E} \left[\norm{g_k - \grad \Phi(x_k)} \vert \mathcal{F}_k \right] \right)^2 \leq \mathbb{E} \left[\norm{g_k - \grad \Phi(x_k)}^2 \vert \mathcal{F}_k \right]
\leq \epsilon^2 \; ,
\end{align*}
which gives
\begin{align*}
\mathbb{E} \left[\norm{g_k - \grad \Phi(x_k)} \vert \mathcal{F}_k \right] \leq \epsilon \; .
\end{align*}
Analogously, if $\Phi$ has a finite-sume structure with component functions $\phi_i$ that are $L$-Lipschitz,  we get
\begin{align*}
\mathbb{E} \left[\norm{g_k - \grad \Phi(x_k)}^2 \vert \mathcal{F}_k \right] \leq \underbrace{\mathbb{E} \left[\norm{g_{k_0} - \grad \Phi(x_{k_0})}^2 \vert \mathcal{F}_{k_0} \right]}_{\leq \frac{\epsilon^2}{2} \; {\rm Eq.(~\ref{eq:lem.spider.1-fs})}} + m \frac{\epsilon^2}{2m} \leq \epsilon^2 \; ,
\end{align*}
from which, again, with Jensen's inequality the claim follows as
\begin{align*}
\mathbb{E} \left[\norm{g_k - \grad \Phi(x_k)} \vert \mathcal{F}_k \right] \leq \epsilon \; .
\end{align*}
\end{enumerate}
\end{proof}

\noindent With this preparatory work, we arrive at the main result for this section: We show that \spiderfw attains a global sublinear convergence rate
for nonconvex objectives.\\
\begin{theorem}[Convergence \spiderfw]\label{conv:spider}
  With the parameter choices (~\ref{params:spider}), Algorithm~\ref{alg.spider} converges in expectation with rate $\mathbb{E}\left[	\mathcal{G} (\hat{x})	\right] = O \left(	\frac{1}{\sqrt{K}}	\right)$.\\
\end{theorem}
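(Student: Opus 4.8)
The plan is to reuse the descent-plus-telescope machinery from the proofs of Theorems~\ref{thm:srfw} and~\ref{thm:conv-svrfw}, with the single change that the per-iteration gradient error is now controlled by the \spider bound of Lemma~\ref{lem.spider} rather than by Lemmas~\ref{lem:RM} or~\ref{lem:VR} alone. First I would apply Lemma~\ref{A.smooth} to the update $x_{k+1}=\gamma(\eta_k)$ of Algorithm~\ref{alg.spider}, then ``add a zero'' with respect to the \spider estimate $g_k$ inside the inner product, apply Cauchy--Schwarz to the residual, and bound $\norm{\Exp_{x_k}^{-1}(z_{k+1})}\le D := {\rm diam}(\Xc)$. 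Since $z_{k+1}$ is the $\argmin$ of the linear oracle against $g_k$, the leading inner product equals $-\hat{\mathcal{G}}(x_k)$, yielding
\begin{align*}
\Phi(x_{k+1}) \le \Phi(x_k) - \eta_k \hat{\mathcal{G}}(x_k) + \eta_k D \norm{\grad \Phi(x_k) - g_k} + \half M_\Phi \eta_k^2.
\end{align*}

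Next I would take conditional expectations given $\mathcal{F}_k$ and bound the residual. The delicate point is that Lemma~\ref{lem.spider} supplies $\E[\norm{\grad \Phi(x_k)-g_k}\mid\mathcal{F}_k]\le\epsilon$ only for ${\rm mod}(k,n)\neq 0$; at an epoch start the error is instead controlled by Lemmas~\ref{lem:RM} and~\ref{lem:VR}, but the choice $\vert S_1\vert=2C^2/\epsilon^2$ (resp.\ $2L^2D^2/\epsilon^2$) was calibrated precisely so that this error is also at most $\epsilon$. Hence the same $\epsilon$-bound holds at \emph{every} iteration, and after invoking the tower property I obtain the uniform recursion
\begin{align*}
\E[\Phi(x_{k+1})] \le \E[\Phi(x_k)] - \eta_k \E[\hat{\mathcal{G}}(x_k)] + \eta_k D \epsilon + \half M_\Phi \eta_k^2.
\end{align*}

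I would then sum over $k=0,\dots,K-1$, telescope the $\Phi$ terms, and use the uniform-random output together with the relation $\E[\hat{\mathcal{G}}(x)]=\mathcal{G}(x)$ established before the theorem, so that $\E[\E[\hat{\mathcal{G}}(x_k)]]=\E[\mathcal{G}(\hat{x})]$. With $\eta_k=\eta$ this gives $K\eta\,\E[\mathcal{G}(\hat{x})]\le C_{x_0}+K\eta D\epsilon+\half M_\Phi\eta^2 K$, where $C_{x_0}$ absorbs $\Phi(x_0)-\E[\Phi(x_K)]$. Dividing by $K\eta$ leaves $\E[\mathcal{G}(\hat{x})]\le C_{x_0}/(K\eta)+D\epsilon+\half M_\Phi\eta$, and substituting $\eta=1/\sqrt{K}$ and $\epsilon=1/\sqrt{K}$ (from $n=\sqrt{K}=1/\epsilon$) collapses all three terms to order $1/\sqrt{K}$, giving the claim. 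The main obstacle is the martingale/conditioning bookkeeping: one must justify that the residual bound, even though it switches source at epoch boundaries, holds uniformly, and that the conditional $\epsilon$-bounds can be promoted to a bound on the summed unconditional gaps via iterated expectation --- once that is in place, the remainder is the same Cauchy--Schwarz-and-telescope computation carried out twice already.
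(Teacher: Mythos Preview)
Your proposal is correct and follows essentially the same approach as the paper: Lemma~\ref{A.smooth} plus the ``add a zero'' trick and Cauchy--Schwarz give the one-step inequality, Lemma~\ref{lem.spider} supplies the uniform $\epsilon$-bound on the gradient residual, and summing, telescoping, and plugging in $\eta=1/\sqrt{K}$, $\epsilon=1/\sqrt{K}$ yields the result. Your treatment is in fact slightly more careful than the paper's in that you explicitly flag the epoch-boundary case and the tower-property step, but the underlying argument is identical.
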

\begin{proof} 
We again have
\begin{align*}
\Phi(x_{k+1}) &\overset{(1)}{\leq} \Phi(x_k) + \eta_k \ip{\grad \; \Phi(x_k)}{\Exp_{x_k}^{-1}(y_k)} + \frac{1}{2} M_{\Phi} \eta_k^2 \\
&\overset{(2)}{\leq} \Phi(x_k) + \eta_k \ip{g_k(x_k)}{\Exp_{x_k}^{-1}(y_k)} + \eta_k \ip{\grad \; \Phi(x_k) - g_k(x_k)}{\Exp_{x_k}^{-1}(y_k)} + \frac{1}{2} M_{\Phi} \eta_k^2 \; ,
\end{align*}
where, (1) follows from Lemma~\ref{A.smooth} and (2) from ``adding a zero" with respect to $g_k$. We again apply the Cauchy-Schwartz inequality to the inner product and make use of the fact that the geodesic distance between points in $\Xc$ is bounded by its diameter:
\begin{align}
\ip{\grad \; \Phi(x_k) - g_k(x_k)}{\Exp_{x_k}^{-1}(y_k)} \leq \norm{\grad \; \Phi(x_k) - g_k(x_k)} \cdot \underbrace{\norm{\Exp_{x_k}^{-1}(y_k)}}_{\leq {\rm diam}(\Xc)} \; .
\end{align}
This gives (with $D := {\rm diam}(\Xc)$)
\begin{align*}
\Phi(x_{k+1}) &\leq \Phi(x_k) + \eta_k \ip{g_k(x_k)}{\Exp_{x_k}^{-1}(y_k)} + \eta_k D \norm{\grad \; \Phi(x_k) - g_k(x_k)} + \frac{1}{2} M_{\Phi} \eta_k^2 \; .
\end{align*}
Taking expectations, we get
\begin{align*}
\mathbb{E}\left[ \Phi(x_{k+1}) \right] &\leq \mathbb{E}\left[ \Phi(x_{k}) \right] + \eta_k  \underbrace{\mathbb{E}\left[ \ip{g_k(x_k)}{\Exp_{x_k}^{-1}(y_k)} \right]}_{=-\mathbb{E}\left[\hat{\mathcal{G}}(x_k)\right]}  + \eta_k D \underbrace{\mathbb{E}\left[ \norm{\grad \; \Phi(x_k) - g_k(x_k)} \right]}_{\leq \epsilon} + \frac{1}{2} M_{\Phi} \eta_k^2 \; .
\end{align*}
With Lemma~\ref{lem.spider} and the definition of the stochastic Frank-Wolfe gap, this can be rewritten as 
\begin{align*}
\mathbb{E}\left[ \Phi(x_{k+1}) \right] &\leq \mathbb{E} \left[ \Phi(x_{k}) \right] - \eta_k \mathbb{E}\left[ \hat{\mathcal{G}}(x_k) \right] + \eta_k D \epsilon + \frac{1}{2} M_{\Phi} \eta_k^2 \; .
\end{align*}
Summing and telescoping gives 
\begin{align*}
\mathbb{E} \left[\mathcal{G}(\hat{x})\right] \sum_k \eta_k &\leq \mathbb{E} \left[ \Phi(x_{0}) \right] - \mathbb{E} \left[ \Phi(x_{K}) \right] +  D \epsilon \sum_k \eta_k + \frac{1}{2} M_{\Phi} \sum_k \eta_k^2 \\
&\leq  \left( \Phi (x_{0}) - \mathbb{E}\left[ \Phi(x_K) \right]\right)  +  D \epsilon \sum_k \eta_k + \frac{1}{2} M_{\Phi} \sum_k \eta_k^2 \; ,
\end{align*}
where we have again used the definition of the output in Algorithm~\ref{alg.spider}; in particular, that $\mathbb{E} \left[\mathbb{E} \left[\hat{\mathcal{G}}(x_K)\right] \right] = \mathbb{E} \left[\mathcal{G}(\hat{x})\right]$. With $\eta_k=\eta=\frac{1}{\sqrt{K}}$, this becomes 
\begin{align*}
\underbrace{K \eta}_{= \sqrt{K}} \mathbb{E} \left[\mathcal{G}(\hat{x})\right] &\leq \left( \Phi (x_{0}) - \mathbb{E}\left[ \Phi(x_K) \right]\right) +  D \epsilon \underbrace{K \eta}_{= \sqrt{K}} + \frac{1}{2} M_{\Phi} \underbrace{K \eta^2}_{= 1} \; .
\end{align*}
Note, that $\epsilon=\frac{1}{n}=\frac{1}{\sqrt{K}}$. Dividing by $\sqrt{K}$ then gives the claim:
\begin{align}
\mathbb{E} \left[\mathcal{G}(\hat{x})\right] &\leq \frac{1}{\sqrt{K}} \left(C_{x_0}  +  D \underbrace{\epsilon \sqrt{K}}_{=1} + \frac{1}{2} M_{\Phi} \right)\; ,
\end{align}
where $C_{x_0} > \Phi(x_0) - \Phi(x^\star)$ depends on the initialization only and $x^{\star}$ is a first-order stationary point.
\end{proof}
\begin{cor}
\spiderfw obtains an $\epsilon$-accurate solution with SFO/ IFO complexity of $O \left(\frac{1}{\epsilon^3}	\right)$ and RLO complexity of $O \left(\frac{1}{\epsilon^2}	\right)$.
\end{cor}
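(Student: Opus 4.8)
The corollary is the complexity bookkeeping that accompanies Theorem~\ref{conv:spider}, so the plan is purely to count oracle calls under the schedule~\eqref{params:spider}. First I would convert the rate into an iteration count: Theorem~\ref{conv:spider} gives $\mathbb{E}[\mathcal{G}(\hat x)] \le \tfrac{1}{\sqrt K}\bigl(C_{x_0} + D + \tfrac12 M_\Phi\bigr)$, and forcing the right-hand side to be at most $\epsilon$ requires $K = O(1/\epsilon^2)$. Since each iteration of Algorithm~\ref{alg.spider} invokes the linear oracle exactly once (the step computing $z_{k+1}$), the total number of RLO calls equals the number of iterations, giving RLO complexity $O(1/\epsilon^2)$ immediately.

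The substantive part is the SFO/IFO count, which I would split into the epoch-boundary batches $\vert S_1\vert$ and the in-epoch batches $\vert S_2\vert$. Using $n = \sqrt K = 1/\epsilon$, the run consists of $K/n = \sqrt K = O(1/\epsilon)$ epochs, each paying $\vert S_1\vert = O(1/\epsilon^2)$ gradient evaluations; this contributes $O(1/\epsilon)\cdot O(1/\epsilon^2) = O(1/\epsilon^3)$. For the in-epoch batches I would substitute the schedule into $\vert S_2\vert = \lceil \min\{m,\, 2nL^2\norm{\Exp_{x_{k-1}}^{-1}(x_k)}/\epsilon^2\}\rceil$ and control the geodesic displacement: because $x_k = \gamma(\eta_{k-1})$ is a Frank--Wolfe step inside the diameter-$D$ set $\Xc$, one has $\norm{\Exp_{x_{k-1}}^{-1}(x_k)} = d(x_{k-1},x_k) \le \eta_{k-1} D$, and the $L$-smoothness bound entering Lemma~\ref{lem.spider} turns this into a per-step budget of order $n\eta^2 L^2 D^2/\epsilon^2$. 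Plugging in $\eta = 1/\sqrt K = \epsilon$ and $n = 1/\epsilon$ makes each $\vert S_2\vert$ of order $L^2 D^2/\epsilon$, and summing over the $K = O(1/\epsilon^2)$ iterations yields $\sum_k \vert S_2\vert = O(1/\epsilon^3)$ as well. Combining the two pieces gives SFO/IFO complexity $O(1/\epsilon^3)$.

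The main obstacle I anticipate is exactly this in-epoch tally, since $\vert S_2\vert$ is a random quantity tied to the iterate displacement $\norm{\Exp_{x_{k-1}}^{-1}(x_k)}$: the delicate point is to bound the displacement uniformly by $\eta D$ and then verify that the coupling $Kn\eta^2 = O(1/\epsilon)$ built into the schedule keeps the accumulated cost at $O(1/\epsilon^3)$ rather than letting it blow up to $O(1/\epsilon^4)$. A secondary check is the finite-sum regime, where the $\min\{m,\cdot\}$ cap in $\vert S_2\vert$ must be handled so that the $O(1/\epsilon^3)$ bound holds uniformly; here the cap only ever helps, so one simply drops it in the upper estimate. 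Everything else is arithmetic substitution of~\eqref{params:spider}.
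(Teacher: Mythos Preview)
Your proposal is correct and follows essentially the same approach as the paper: derive $K=O(1/\epsilon^2)$ from Theorem~\ref{conv:spider} to get the RLO count, then split the SFO/IFO tally into the $\lvert S_1\rvert$ contributions at epoch boundaries (there are $K/n=O(1/\epsilon)$ of these, each of size $O(1/\epsilon^2)$) and the in-epoch $\lvert S_2\rvert$ contributions, bounding the latter via the displacement estimate $\norm{\Exp_{x_{k-1}}^{-1}(x_k)}\le \eta D$ before substituting $n\eta=1$. The paper organizes the $\lvert S_2\rvert$ sum per epoch rather than per iteration, but the arithmetic is identical; your remark that the $\min\{m,\cdot\}$ cap only helps is a correct addendum the paper leaves implicit.
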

\begin{proof}
It follows directly from Theorem~\ref{conv:spider} that \spiderfw has an RLO complexity of $O \left(\frac{1}{\epsilon^2}	\right)$.  For the SFO complexity, consider a stochastic objective $\Phi$. Then
\begin{align*}
SFO = \sum_{s=1}^n \left( \vert S_1 \vert + \mathbb{E} \left[ \sum_{k=2}^n \vert S_2 \vert \right] \right) \; .
\end{align*}
We have
\begin{align*}
\mathbb{E} \left[ \sum_{k=2}^n \vert S_2 \vert \right] 
= \mathbb{E}  \left[\sum_{k=2}^n \frac{2nL \norm{\Exp_{x_{k-1}}^{-1}(x_k)}}{\epsilon^2} \right]
\overset{(1)}{\lesssim}  \frac{2n^2 L^2 (D^2 \eta^2)}{2 \epsilon^2}  \overset{(2)}{=} O \left(\frac{1}{\epsilon^2} \right) \; ,
\end{align*}
where (1) follows from $\norm{\Exp_{x_{k-1}}^{-1}(x_k)} \leq \eta D$ (see Equation~\ref{eq:diam-eta}) and (2) from $\eta = \frac{1}{n}$ by construction. This gives
 \begin{align*}
SFO = O \left( n \left(\frac{1}{\epsilon^2} + \frac{1}{\epsilon^2}		\right) \right) = O \left( \frac{1}{\epsilon^3} \right).
\end{align*}
An analogous argument gives the IFO complexity, if $\Phi$ has a finite-sum structure.
\end{proof}

\noindent We again consider the special case of g-convex objectives for completeness. Here, we obtain a result on function suboptimality:
\begin{cor}
If $\Phi$ is g-convex, one can show under the assumptions of Theorem~\ref{conv:spider} a similar convergence rate for the optimality gap, i.e., $\mathbb{E} \left[\Delta_{k} \right] = O(1/\sqrt{K})$.
\end{cor}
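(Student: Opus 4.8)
The plan is to mirror the argument for the g-convex \srfw result (Corollary~\ref{cor:srfw}), but with the unbiased minibatch bound replaced by the \spider deviation bound of Lemma~\ref{lem.spider}, and working throughout with conditional expectations given the filtration $(\mathcal{F}_k)_k$. As in the proof of Theorem~\ref{conv:spider}, I would start from the smoothness descent inequality of Lemma~\ref{A.smooth} and ``add a zero'' with respect to the \spider estimate $g_k$, obtaining
\begin{align*}
\Phi(x_{k+1}) \leq \Phi(x_k) + \eta_k \ip{g_k(x_k)}{\Exp_{x_k}^{-1}(z_{k+1})} + \eta_k \ip{\grad \Phi(x_k) - g_k(x_k)}{\Exp_{x_k}^{-1}(z_{k+1})} + \tfrac{1}{2} M_{\Phi} \eta_k^2.
\end{align*}
Since $z_{k+1}$ is the $\argmin$ of $\ip{g_k}{\Exp_{x_k}^{-1}(\cdot)}$ over $\Xc$ and $x^\star\in\Xc$, the FW step yields $\ip{g_k(x_k)}{\Exp_{x_k}^{-1}(z_{k+1})}\leq\ip{g_k(x_k)}{\Exp_{x_k}^{-1}(x^\star)}$, so I can replace the first inner product accordingly.

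The crux is the term $\ip{g_k(x_k)}{\Exp_{x_k}^{-1}(x^\star)}$, and here the \spider estimate behaves differently from the earlier methods: by Lemma~\ref{eq:approx-martingale} it is \emph{not} conditionally unbiased, so one cannot simply pass to $\ip{\grad \Phi(x_k)}{\Exp_{x_k}^{-1}(x^\star)}$ by taking expectations. Instead I would split
\begin{align*}
\ip{g_k(x_k)}{\Exp_{x_k}^{-1}(x^\star)} = \ip{\grad \Phi(x_k)}{\Exp_{x_k}^{-1}(x^\star)} + \ip{g_k(x_k) - \grad \Phi(x_k)}{\Exp_{x_k}^{-1}(x^\star)},
\end{align*}
bound the first summand by g-convexity (see~\eqref{eq:g-convex}), i.e.\ $\ip{\grad \Phi(x_k)}{\Exp_{x_k}^{-1}(x^\star)} \leq -\bigl(\Phi(x_k) - \Phi(x^\star)\bigr) = -\Delta_k$, and control the second (bias) summand together with the remaining approximation-error inner product by Cauchy--Schwarz, the diameter bound $\norm{\Exp_{x_k}^{-1}(\cdot)}\leq D$, and Lemma~\ref{lem.spider}. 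After taking $\E[\cdot\mid\mathcal{F}_k]$, each of these two error terms contributes at most $D\epsilon$, giving
\begin{align*}
\E\left[\Phi(x_{k+1}) \mid \mathcal{F}_k\right] \leq \Phi(x_k) - \eta_k \Delta_k + 2\eta_k D \epsilon + \tfrac{1}{2} M_{\Phi} \eta_k^2.
\end{align*}

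Finally I would take total expectations, rearrange to $\eta_k\,\E[\Delta_k] \leq \E[\Phi(x_k) - \Phi(x_{k+1})] + 2\eta_k D\epsilon + \tfrac12 M_\Phi \eta_k^2$, sum over $k=0,\dots,K-1$ and telescope the function-value differences; the uniform-random choice of $\hat{x}$ turns $\tfrac1K\sum_k\E[\Delta_k]$ into $\E[\Delta_{\hat{k}}]$. Dividing by $K\eta=\sqrt{K}$ and inserting the parameter choices~\eqref{params:spider}, namely $\eta=1/\sqrt{K}$ and $\epsilon=1/\sqrt{K}$, yields $\E[\Delta_{\hat{k}}] \leq \tfrac{1}{\sqrt{K}}\bigl(C_{x_0} + 2D + \tfrac12 M_\Phi\bigr) = O(1/\sqrt{K})$, as claimed. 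The only genuine obstacle relative to Corollary~\ref{cor:srfw} is the conditional bias of the \spider estimate; this is precisely why the argument must be routed through the $L^1$ deviation bound of Lemma~\ref{lem.spider} rather than through unbiasedness, and since that bound is itself of order $\epsilon=1/\sqrt{K}$, the extra bias contribution does not degrade the final rate.
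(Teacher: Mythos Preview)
Your proposal is correct and follows the route the paper itself indicates (it only states that the proof is ``analogous to the proof of Corollary~\ref{cor:srfw} (for stochastic objectives) and of Corollary~\ref{cor:svrfw} (for objectives with finite-sum structure)'' without giving details). You are in fact more careful than a literal reading of ``analogous'' would suggest: the earlier corollaries exploit that the gradient estimator is conditionally unbiased to pass from $\ip{g_k}{\Exp_{x_k}^{-1}(x^\star)}$ to $\ip{\grad\Phi(x_k)}{\Exp_{x_k}^{-1}(x^\star)}$, whereas you correctly observe (via Lemma~\ref{eq:approx-martingale}) that the \spider estimate is \emph{not} conditionally unbiased, and you route the argument through the $L^1$ deviation bound of Lemma~\ref{lem.spider} instead. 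This yields an additional $D\epsilon$ term (hence the constant $2D$ rather than $D$), but since $\epsilon=1/\sqrt{K}$ by~\eqref{params:spider} the rate is unaffected.
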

\noindent The proof is analogous to the proof of Corollary~\ref{cor:srfw} (for stochastic objectives) and of Corollary~\ref{cor:svrfw} (for objectives with finite-sum structure).

\section{Experiments}
\vspace*{-8pt}
\label{sec:experiments}
(Stochastic) Riemannian optimization is frequently considered in the machine learning literature, including for the computation of hyperbolic embeddings~\citep{sala}, low-rank matrix and tensor factorization~\citep{vandereycken} and eigenvector based methods~\citep{journee,zhang16,tripuraneni2018averaging}.

In this section we validate the proposed stochastic algorithms by comparison with the deterministic \rfw~\citep{weber-sra} and state-of-the-art stochastic Riemannian optimization methods. All experiments were performed in \matlab.

Our numerical experiments use synthetic data, consisting of sets of 
symmetric, positive definite matrices. We generate 
matrices by sampling real matrices of dimension $d$ uniformly at random $M_i \sim \mathcal{U}(\mathbb{R}^{d \times d})$
and then multiplying each with its transpose $M_i \gets M_i M_i^T$.  To generate ill-conditioned matrices, we sample matrices with a rank deficit $U_i \sim \mathcal{U}(\mathbb{R}^{d \times d})$ (with ${\rm rank}(U)<d$) and set $B_i \gets \delta I + U_i U_i^T$ (for a small $\delta >0$).

Throughout the experiments, the hyperparameter choices $(b,K)$ are guided by the specifications in Algorithm~\ref{alg.srfw} (for \srfw) and Algorithm~\ref{alg.svrfw} (for \svrfw) and their theoretical analysis. All \rfw methods are implemented with decreasing step sizes.

\subsection{Riemannian centroid}
\begin{figure*}[ht]
\begin{center}
\includegraphics[width=\textwidth]{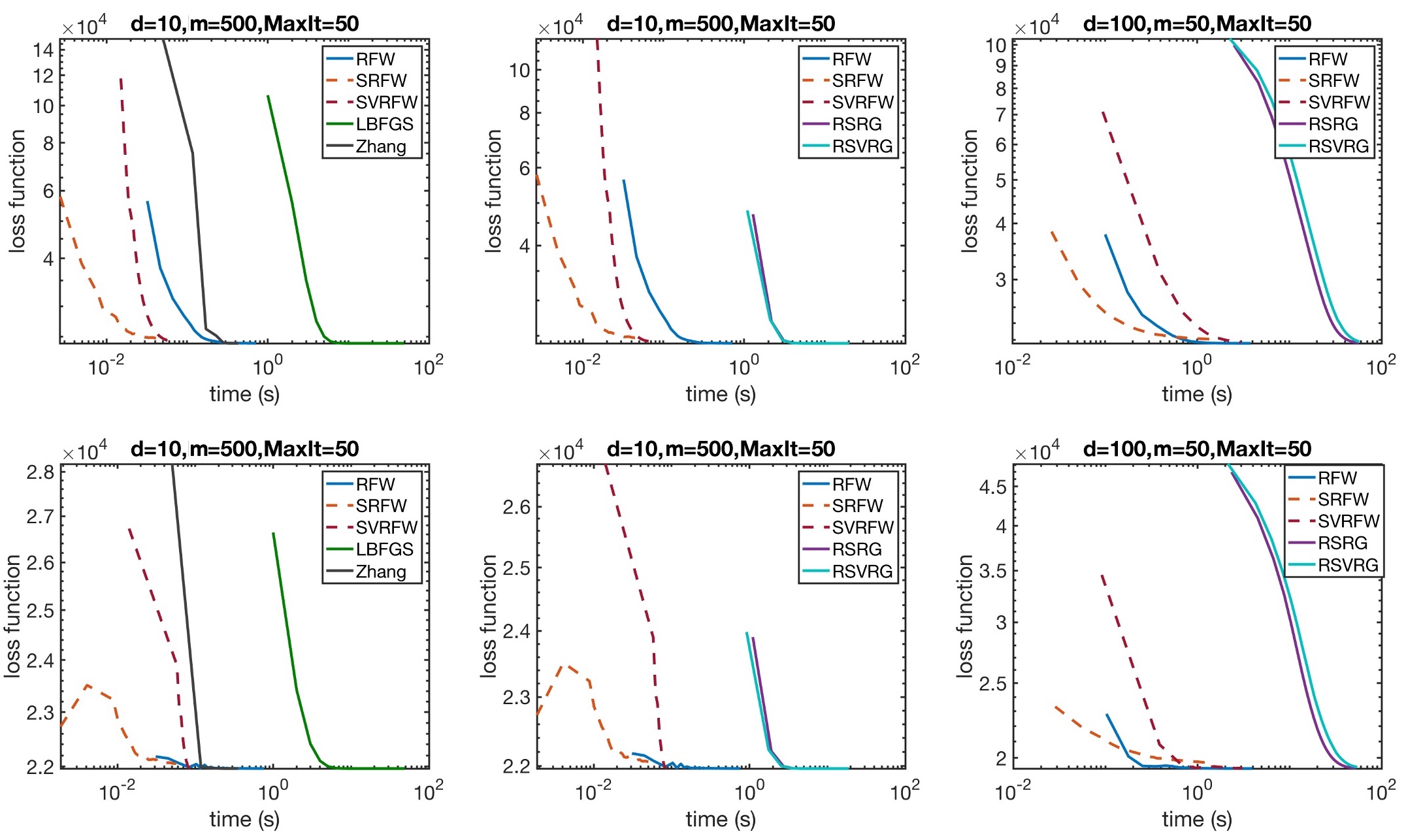}
\caption{\textbf{Riemannian centroid.} \rfw and its stochastic variants in comparison with state-of-the-art Riemannian optimization methods (\emph{parameters:} $d$ - size of matrices, $m$ - number of matrices, $MaxIt$ - number of iterations). All experiments are initialized with the harmonic mean.  Hereby, we compare against deterministic algorithms (\textsc{LBFGS} and \textsc{Zhang}, left) as well as recent state-of-the-art stochastic Riemannian algorithms \textsc{R-SRG} and \textsc{RSVRG} (middle and right). The results in the top row are for well-conditioned, the results in the bottom row for ill-conditioned matrices.}
	\label{fig:srfw-karcher}
\end{center}
\end{figure*}
The computation of the \emph{Riemannian centroid} (also known as the \emph{geometric matrix mean} or the \emph{Karcher mean}) is a canonical benchmark task for testing Riemannian optimization methods~\citep{zhang16,kasai-mishra,ropt-package}.  
Besides its importance as a benchmark, the Karcher mean is a fundamental subroutine in many machine learning methods, for instance, in the computation of hyperbolic embeddings~\citep{sala}. Although the Karcher mean problem is nonconvex in Euclidean space, it is g-convex in the Riemannian setting. This allows for the application of \rfw, in addition to the stochastic methods discussed above. \rfw requires the computation of the full gradient in each iteration step, whereas the stochastic variants implement gradient estimates at a significantly reduced computational cost. This results in observable performance gains as shown in our experiments (Fig.~\ref{fig:srfw-karcher}).

Formally, the Riemannian centroid is defined as the mean of a set $M = \lbrace M_i\rbrace$ of $d\times d$ positive definite matrices (we write $\vert M \vert = m$) with respect to the Riemannian metric. This task requires solving
\begin{align*}
\min_{H \preceq X \preceq A} \; &\sum_{i=1}^m w_i \delta_R^2(X,M_i ) = \sum_{i=1}^m w_i \norm{\log \left(	X^{-1/2} M_i 	X^{-1/2} \right)}_F^2 \; ,
\end{align*}
where $\norm{\cdot}_F$ denotes the Frobenius norm. The well-known \emph{matrix means inequality} bounds the Riemannian mean from above and below with respect to the Löwner order: The \emph{harmonic mean} $H :=\left(	\sum_i	w_i M_i^{-1}\right)^{-1}$ gives a lower bound on the geometric matrix mean, while the arithmetic mean $A := \sum_i w_i M_i$ provides an upper bound~\citep{bhatia07}.  
This allows for phrasing the computation of the Riemannian centroid as a constrained optimization task with interval constraints given by the harmonic and arithmetic means (though it could be solved as unconstrained task too). 
Writing $\phi_i(X) = w_i\delta_R^2(X,M_i)$, we note that the gradient of the objective is given by $\nabla\phi_i(X) = w_i\inv{X}\log(XM_i^{-1})$ (see e.g., \cite[Ch.6]{bhatia07}), whereby the corresponding Riemannian ``linear'' oracle reduces to solving
\begin{align}
  \label{eq:4}
  Z_k \gets \argmin_{H \preceq Z \preceq A} \ip{X_k^{1/2}\nabla \phi_i(X_k)X_k^{1/2}}{\log(X_k^{-1/2}ZX_k^{-1/2})} \; .
\end{align}
Remarkably, \eqref{eq:4} can be solved in closed form~\citep[Theorem 4.1]{weber-sra}, which we exploit to achieve an efficient implementation of \rfw and \textsc{Stochastic} \rfw.
For completeness, we recall the theorem below:
\begin{theorem}[Theorem 4.1~\citep{weber-sra}]
  \label{thm:logtrace}
  Let $L, U \in \pd_d$ such that $L \prec U$. Let $S \in \H_d$ and $X \in \pd_d$ be arbitrary. Then, the solution to the optimization problem
  \begin{equation}
    \label{eq.12.R}
    \min_{L \preceq Z \preceq U}\quad \trace(S \log(XZX)),
  \end{equation}
  is given by $Z = X^{-1}Q \left( P^* [-\sgn(D)]_+ P  + \hat{L}\right) Q^*X^{-1}$, where $S=QDQ^*$ is a diagonalization of $S$, $\hat{U} - \hat{L}=P^* P$ with $\hat{L}=Q^* X L X Q$ and $\hat{U}=Q^* X U X Q$.
\end{theorem}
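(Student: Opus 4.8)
The plan is to reduce the matrix-interval problem to a diagonal form in which the optimizer can be read off, and then to certify optimality using the operator monotonicity and operator concavity of the matrix logarithm.

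First I would eliminate $X$ and $S$ by two changes of variable. Substituting $W = XZX$ turns the feasible set $L \preceq Z \preceq U$ into the order interval $XLX \preceq W \preceq XUX$ (congruence by the positive definite $X$ preserves the L\"owner order), and the objective into $\trace(S\log W)$. Next, using the eigendecomposition $S = QDQ^*$ with $Q$ unitary and $D$ real diagonal, and setting $V = Q^*WQ$, I invoke $\log(Q^*WQ)=Q^*(\log W)Q$ to rewrite the objective as $\trace(D\log V)$ and the constraint as $\hat L \preceq V \preceq \hat U$, with $\hat L = Q^*XLXQ$ and $\hat U = Q^*XUXQ$ exactly as in the statement. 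Thus it suffices to solve
\[
\min_{\hat L \preceq V \preceq \hat U}\ \trace(D\log V),
\]
and then undo the two substitutions via $Z = X^{-1}QV Q^*X^{-1}$, which reproduces the claimed closed form.

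Second, I would normalize the interval. Since $L \prec U$ gives $\hat L \prec \hat U$, the factorization $\hat U - \hat L = P^*P$ has $P$ invertible, so every feasible $V$ is $V = \hat L + P^*TP$ with $0 \preceq T \preceq I$, and conversely. The reduced problem becomes $\min_{0\preceq T\preceq I}\trace(D\log(\hat L+P^*TP))$, whose feasible set has the orthogonal projections as its extreme points; the candidate solution corresponds to the diagonal $0/1$ projection $T^\star = [-\sgn(D)]_+$, i.e.\ the projection onto the eigenspaces of $S$ with negative eigenvalue. The guiding intuition, which I would make precise, is that $\log$ is operator monotone, so each diagonal entry $V\mapsto[\log V]_{ii}$ is nondecreasing in the L\"owner order; since $\trace(D\log V)=\sum_i D_{ii}[\log V]_{ii}$, minimization wants $V$ pushed toward $\hat U$ along the coordinates where $D_{ii}<0$ and toward $\hat L$ where $D_{ii}>0$, which is precisely what $T^\star=[-\sgn(D)]_+$ encodes.

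The main obstacle is turning this coordinatewise intuition into a proof of global optimality, because for indefinite $D$ the map $V\mapsto\trace(D\log V)$ is a difference of an operator-concave and an operator-convex part and is therefore neither convex nor concave; the positive and negative coordinates are moreover coupled through the off-diagonal structure of $\hat L$ and $P$, so one cannot optimize each coordinate in isolation. I expect this to be handled by a first-order (KKT) stationarity analysis of the reduced problem: one computes the Fr\'echet derivative of $\trace(D\log V)$ through the integral representation of the derivative of $\log$, namely $\int_0^\infty (V+sI)^{-1}(\cdot)(V+sI)^{-1}\,ds$, writes the optimality conditions for $0\preceq T\preceq I$ with multipliers supported complementarily to $T^\star$ and $I-T^\star$, and verifies that $T^\star=[-\sgn(D)]_+$ together with the prescribed factor $P$ satisfies them, with operator monotonicity/concavity of $\log$ upgrading the stationary point to a global minimizer. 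Care with the choice of $P$ is essential here, since $P^*T^\star P$ depends on the factorization, and the closed form is tied to the specific $P$ appearing in the statement.
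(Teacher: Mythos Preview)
The paper does not prove this theorem; it is quoted from \cite{weber-sra} and used as a black box, so there is no in-paper argument to compare against. I will therefore assess your proposal on its own.

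Your two reductions are clean and correct: the congruence $W=XZX$ and the unitary conjugation by $Q$ reduce the problem to $\min_{\hat L\preceq V\preceq \hat U}\trace(D\log V)$ with $D$ diagonal, and the affine reparametrization $V=\hat L+P^*TP$, $0\preceq T\preceq I$, is valid because $\hat U-\hat L\succ 0$ makes $P$ invertible. The candidate $T^\star=[-\sgn(D)]_+$ is the right object, and your heuristic via operator monotonicity of $\log$ (each diagonal entry $[\log V]_{ii}$ is L\"owner-monotone) is exactly the intuition one wants.

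The genuine gap is the step you yourself flag: for indefinite $D$ the objective $T\mapsto\trace\bigl(D\log(\hat L+P^*TP)\bigr)$ is neither convex nor concave, so a KKT/stationarity analysis, even if you carried it out, would only produce a critical point, not a global minimizer. Your sentence ``operator monotonicity/concavity of $\log$ upgrading the stationary point to a global minimizer'' is not a proof; concavity of $V\mapsto\trace(D_+\log V)$ and convexity of $V\mapsto-\trace(D_-\log V)$ pull in opposite directions and do not combine into a global certificate. Concretely, to show $\trace(D\log V)\ge\trace(D\log V^\star)$ for every feasible $V$ you would need, for the coordinates with $D_{ii}>0$, that $[\log V]_{ii}\ge[\log V^\star]_{ii}$, and the reverse on the negative coordinates; operator monotonicity only gives $[\log V]_{ii}\ge[\log\hat L]_{ii}$ and $[\log V]_{ii}\le[\log\hat U]_{ii}$, which matches $V^\star$ only if $[\log V^\star]_{ii}$ actually attains these extremal diagonal values on the respective index sets. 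That is a nontrivial property of the specific $V^\star=\hat L+P^*T^\star P$ that you have not verified, and it is tied to the issue you raise at the end: the formula depends on which square-root factor $P$ of $\hat U-\hat L$ is chosen, so either a particular $P$ is intended or an additional argument is needed to show the choice is immaterial. Until that is pinned down, the optimality of the stated $Z$ is not established by your outline.
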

\noindent Setting $L=H$, $U=A$ and $S=X_k^{1/2}\nabla \phi_i(X_k)X_k^{1/2}$,  this result gives a closed form solution to Eq.~\ref{eq:4}.

To evaluate the efficiency of our methods, we compare against state-of-the-art algorithms. First,  Riemannian \textsc{LBFGS}, a quasi-Newton method~\citep{lbfgs}, for which we use an improved limited-memory version of the method available in \emph{Manopt}~\citep{manopt}. Secondly \textsc{Zhang}'s method~\citep{zhang}, a recently published majorization-minimization method for computing the geometric matrix mean. Against both (deterministic) algorithms we observe significant performance gains (Fig.~\ref{fig:srfw-karcher}). In~\citep{weber-sra}, \rfw is compared with a wide range of Riemannian optimization methods and varying choices of hyperparameters. In those experiments, \textsc{LBFGS} and \textsc{Zhang}'s method were reported to be especially competitive, which motivates our choice. We further present two instances of comparing \textsc{Stochastic} \rfw against stochastic gradient-based methods (\textsc{RSrg} and \textsc{Rsvrg}~\citep{ropt-package}), both of which are outperformed by our \rfw approach.

In a second experiment, we compare the accuracy (i.e., $\frac{\vert \phi(x_{\rm final}) - \phi(x^*) \vert}{\vert \phi(x^*) \vert}$) of \rfw and its stochastic variants with that of \textsc{RSrg} and \textsc{Rsvrg}. Figure~\ref{fig:accuracy} shows that \textsc{Stochastic} \rfw reach a medium accuracy fast; however, ultimately \rfw, as wells as \textsc{R-Srg} and \textsc{Rsvrg} reach a higher accuracy.  \textsc{Stochastic} \rfw is therefore particularly suitable for data science and machine learning applications, where we encounter high-dimensional, large-scale data sets and very high accuracy is not required.
\begin{figure*}[ht]
\begin{center}
\includegraphics[width=\textwidth]{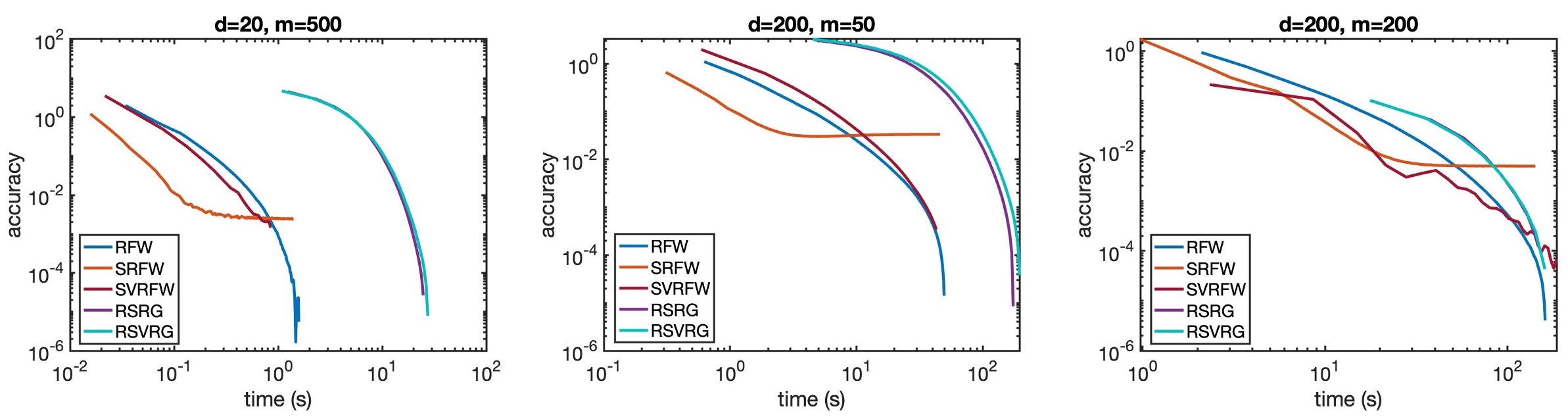}
\caption{\textbf{Riemannian centroids.} Accuracy of \rfw and stochastic variants in comparison with \textsc{RSrg} and \textsc{Rsvrg} for inputs of different size ($d$: size of matrices, $m$: number of matrices).  All experiments are initialized with the arithmetic mean. }
	\label{fig:accuracy}
\end{center}
\end{figure*}

We note that the comparison experiments are not quite fair to our methods, as neither \textsc{R-Srg} nor \textsc{Rsvrg} implement the noted projection operation (see discussion in section~\ref{sec:proj-free}) required to align their implementation with their theory.

\subsection{Wasserstein Barycenters}
\begin{figure*}[ht]
\begin{center}
\includegraphics[width=\textwidth]{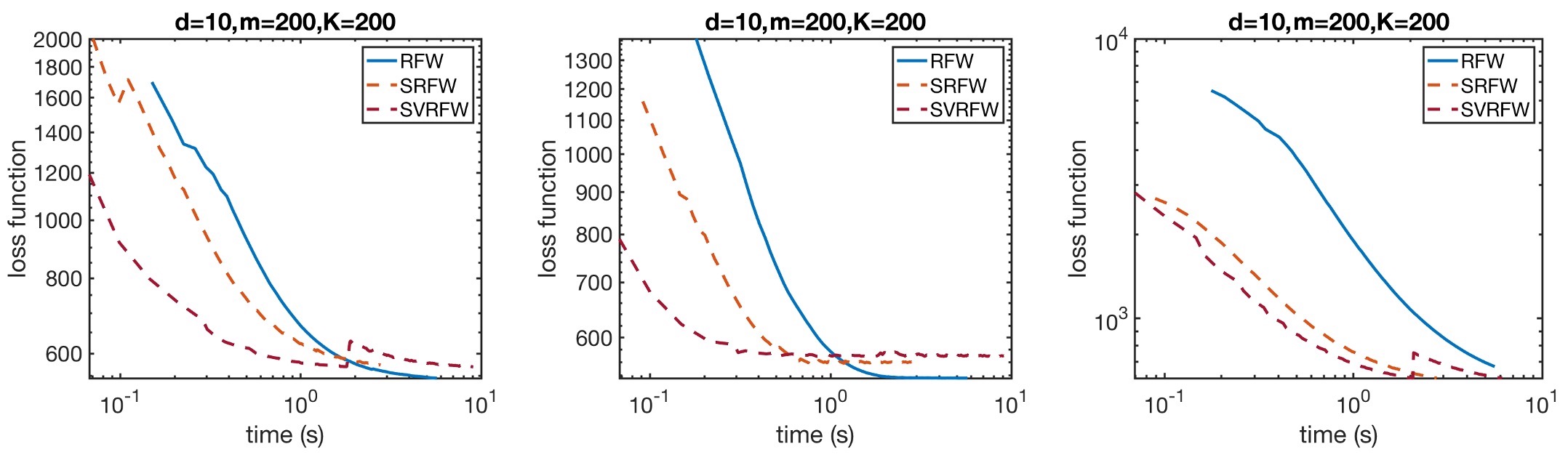}
\caption{\textbf{Wasserstein barycenters.} Performance of \rfw and stochastic variants for well-conditioned inputs of fixed size ($d$: size of matrices, $m$: number of matrices, $K$: number of iterations) with different initializations: $X_0 \sim \mathcal{C}$ (left), $X_0=\frac{1}{2}\left(\alpha I + A\right)$ (middle) and $X_0=A$ (right).  Here,  $A$ denotes the arithmetic mean of $\mathcal{C}$ and $\alpha$ the smallest eigenvalue over $\mathcal{C}$.}
	\label{fig:srfw-wm}
\end{center}
\end{figure*}
\begin{figure*}[t]
\begin{center}
\includegraphics[width=\textwidth]{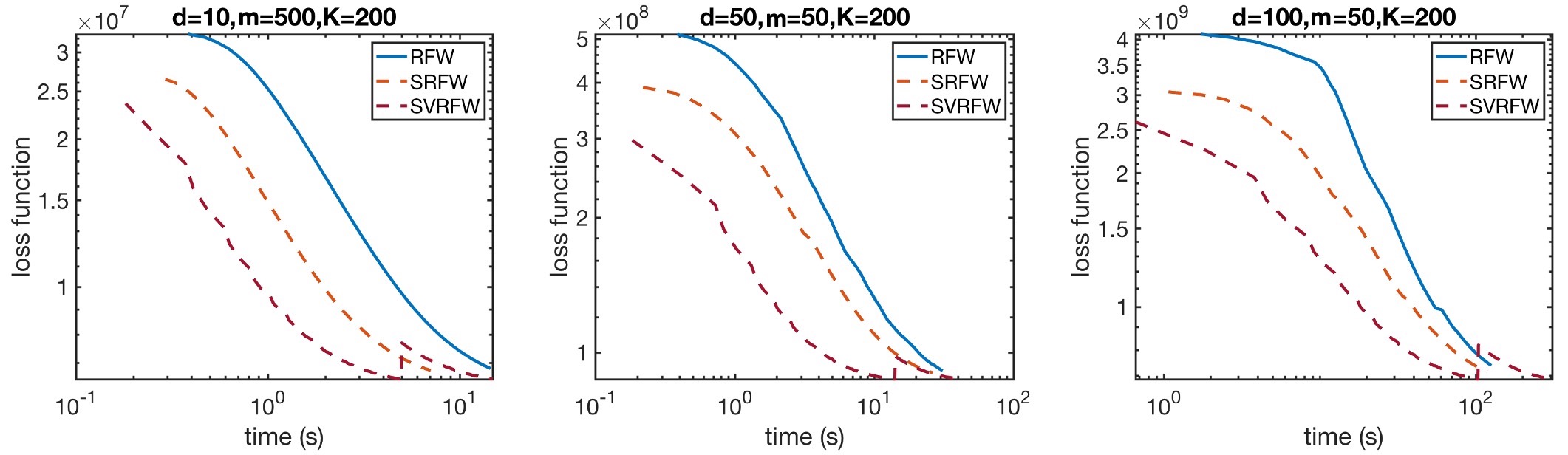}
\caption{\textbf{Wasserstein barycenters for MVNs.} Performance of \rfw and stochastic variants for well-conditioned inputs of different sizes ($d$: size of matrices, $m$: number of matrices, $K$: number of iterations); initialized at $X_0=A$.  Again,  $A$ denotes the arithmetic mean of $\mathcal{C}$ and $\alpha$ the smallest eigenvalue over $\mathcal{C}$.}
	\label{fig:srfw-wm}
\end{center}
\end{figure*}
%
The computation of means of empirical probability measures with respect to the optimal transport metric (or \emph{Wasserstein distance}) is a basic task in statistics. Here, we consider the problem of computing such \emph{Wasserstein barycenters} of multivariate (centered) Gaussians.
This corresponds to the following minimization task on the \emph{Gaussian density manifold} (also known as \emph{Bures manifold}):\footnote{Interestingly, this problem turns out to be Euclidean convex (more precisely,  a nonlinear semidefinite program). However, a Riemannian approach exploits the problem structure more explicitely. }
\begin{equation}\label{eq:obj-wm}
\min_{\alpha I \preceq X \preceq A} \; \sum_{i=1}^M d_W^2(X,\mathcal{C}) 
 = \sum_i w_i \bigl[	\trace(C_i+X) - 2 \trace \bigl(C_i^{1/2} X C_i^{1/2} \bigr)^{1/2}		\bigr]  \; ,
\end{equation}
\noindent where $\mathcal{C} = \lbrace C_i \rbrace \subseteq \mathbb{P}(n), \; \vert \mathcal{C} \vert = m$ are the covariance matrices of the Gaussians and $\alpha$ denotes their minimal eigenvalue over $\mathcal{C}$. 
Note that the Gaussian density manifold is isomorphic to the manifold of symmetric positive definite matrices considered in the previous section. This allows for a direct application of \rfw to Eq.~\ref{eq:obj-wm}, albeit with a different set of constraints.

A closely related problem is the task of computing Wasserstein barycenters of \emph{matrix-variate Gaussians}, i.e., multivariate Gaussians whose covariance matrices are expressed as suitable Kronecker products. Such models are of interest in several inference problems, see for instance~\citep{MVN}. By plugging in Kronecker structured covariances into~\eqref{eq:obj-wm}, the corresponding barycenter problem takes the form
\begin{equation}\label{eq:matrix-variate}
\min_{X \succ 0} \sum_{i=1}^n \trace (A_i \otimes A_i) + \trace (X \otimes X) 
- 2 \trace \bigl[	(A_i \otimes A_i)^{1/2} (X \otimes X) (A_i \otimes A_i)^{1/2}	\bigr]^{1/2} \; .
\end{equation}
Remarkably, despite the product terms, problem~\eqref{eq:matrix-variate} turns out to be (Euclidean) convex (Lemma~\ref{lem:mvn-convex}). This allows one to apply (g-) convex optimization tools, and use convexity to conclude global optimality. This result should be of independent interest.
\begin{lem}\label{lem:mvn-convex}
The barycenter problem for matrix-variate Gaussians (Eq.~\ref{eq:matrix-variate}) is convex.
\end{lem}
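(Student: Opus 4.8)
The plan is to prove convexity term by term: the objective in~\eqref{eq:matrix-variate} is a sum with nonnegative coefficients over $i$, so it suffices to show each summand is a convex function of $X$. First I would unwind the Kronecker structure. Using $(A_i\otimes A_i)^{1/2}=A_i^{1/2}\otimes A_i^{1/2}$ and $(M\otimes N)(P\otimes Q)=(MP)\otimes(NQ)$ gives $(A_i\otimes A_i)^{1/2}(X\otimes X)(A_i\otimes A_i)^{1/2}=B_i\otimes B_i$ with $B_i:=A_i^{1/2}XA_i^{1/2}$, hence its square root is $B_i^{1/2}\otimes B_i^{1/2}$; since $\trace(M\otimes N)=\trace(M)\trace(N)$, the cross term collapses to $[\trace(B_i^{1/2})]^2$. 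Together with $\trace(A_i\otimes A_i)=\trace(A_i)^2$ and $\trace(X\otimes X)=\trace(X)^2$, each summand becomes
\[
\trace(A_i)^2+\trace(X)^2-2\,[\trace((A_i^{1/2}XA_i^{1/2})^{1/2})]^2 .
\]
Here $\trace(A_i)^2$ is constant and $\trace(X)^2$ is convex (the square of a nonnegative linear function), so the whole problem reduces to showing that $X\mapsto[\trace((A_i^{1/2}XA_i^{1/2})^{1/2})]^2$ is \emph{concave}.

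Second, I would strip off $A_i$ by the linear substitution $Y=A_i^{1/2}XA_i^{1/2}$, a bijection of the positive-definite cone. Since concavity is preserved under composition with a linear map, it is enough to prove that
\[
\Psi(Y):=[\trace(Y^{1/2})]^2
\]
is concave on the positive-definite cone. As $\Psi$ depends on $Y$ only through the eigenvalues $\lambda(Y)$ via the symmetric function $q(\lambda)=(\sum_j\sqrt{\lambda_j})^2$, and a symmetric concave function of the eigenvalues lifts to a concave function on the cone of symmetric matrices, it suffices to show that $q$ is concave on the positive orthant.

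Third, I would verify concavity of $q$ by a Hessian computation. Writing $s=\sum_j\sqrt{\lambda_j}$ and $v_j=\lambda_j^{-1/2}$, a direct calculation yields
\[
\nabla^2 q=\tfrac12\,vv^{\top}-\tfrac{s}{2}\,\diag(v_j^3),
\]
so $\nabla^2 q\preceq 0$ is equivalent to $(v^{\top}x)^2\le s\sum_j v_j^3 x_j^2$ for all $x$. This is precisely Cauchy--Schwarz applied to the splitting $v_jx_j=v_j^{-1/2}\cdot(v_j^{3/2}x_j)$, using that $\sum_j v_j^{-1}=\sum_j\sqrt{\lambda_j}=s$. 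Hence $q$ is concave, $\Psi$ is concave, each summand is convex, and summing over $i$ proves the lemma.

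The main obstacle is the middle step, which is exactly the ``remarkable'' phenomenon advertised in the text: although $X\mapsto X\otimes X$ is quadratic and the square of the concave map $\trace((\cdot)^{1/2})$ is \emph{not} concave in general, the particular spectral function $[\trace(Y^{1/2})]^2$ nonetheless is. Everything hinges on the Cauchy--Schwarz bound that renders the Hessian negative semidefinite; the eigenvalue reduction and the Kronecker simplifications are routine once this is established. It is worth stressing that the term-by-term strategy is what makes the weights (resp. the unweighted sum) harmless, so no special argument about them is needed.
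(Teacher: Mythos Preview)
Your proof is correct, but it diverges from the paper's at the crucial step. Both you and the paper first unwind the Kronecker structure to reach
\[
\trace\bigl[(A_i^{1/2}XA_i^{1/2})^{1/2}\otimes(A_i^{1/2}XA_i^{1/2})^{1/2}\bigr],
\]
after which the entire lemma reduces to the concavity of this term (the paper handles $\trace(X\otimes X)=(\trace X)^2$ implicitly, while you note explicitly that it is the square of a nonnegative linear functional). At this point the paper invokes the Ando--Lieb theorem, which asserts joint operator concavity of $(A,B)\mapsto A^{1/2}\otimes B^{1/2}$; restricting to the diagonal $A=B=A_i^{1/2}XA_i^{1/2}$ (linear in $X$) and taking the trace yields concavity in one line. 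You instead push one step further, write the term as $[\trace(Y^{1/2})]^2$ with $Y=A_i^{1/2}XA_i^{1/2}$, reduce to the symmetric spectral function $q(\lambda)=(\sum_j\sqrt{\lambda_j})^2$ via the Davis--Lewis characterization of convex spectral functions, and verify concavity of $q$ by a direct Hessian computation closed by Cauchy--Schwarz. Your route is more self-contained and elementary---it never leaves scalar calculus once the eigenvalue lifting is granted---whereas the paper's is shorter but rests on a deep operator-concavity result. Either way the ``remarkable'' fact is the same: the square of the concave map $\trace(\cdot)^{1/2}$ happens to remain concave, which your Cauchy--Schwarz estimate makes completely explicit.
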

\noindent For the proof, recall the following well-known properties of Kronecker products:
\begin{lem}[Properties of Kronecker products]\label{lem:prop-kronecker}
Let $A,B,C,D \in \mathbb{P}^d$.
\begin{enumerate}
\item $(A \otimes A)^{1/2} = A^{1/2} \otimes A^{1/2}$;
\item $AC \otimes BD = (A \otimes B)(C \otimes D)$.
\end{enumerate}
\end{lem}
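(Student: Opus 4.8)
The plan is to establish property (2)—the so-called \emph{mixed product property}—first, since property (1) will then follow from it together with the uniqueness of the positive definite square root. For (2) I would argue directly from the block structure of the Kronecker product. Writing $A=(a_{ij})$ and $C=(c_{jl})$, the matrix $A\otimes B$ has $(i,j)$-block $a_{ij}B$ and $C\otimes D$ has $(j,l)$-block $c_{jl}D$. Multiplying these block matrices, the $(i,l)$-block of $(A\otimes B)(C\otimes D)$ is $\sum_j (a_{ij}B)(c_{jl}D)=\bigl(\sum_j a_{ij}c_{jl}\bigr)BD=(AC)_{il}\,BD$, which is exactly the $(i,l)$-block of $AC\otimes BD$. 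Since the blocks agree entrywise, $(A\otimes B)(C\otimes D)=AC\otimes BD$, which is (2).

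For (1) I would specialize (2) by setting $B=A$, $C=A^{1/2}$, $D=A^{1/2}$, yielding $(A^{1/2}\otimes A^{1/2})(A^{1/2}\otimes A^{1/2})=(A^{1/2}A^{1/2})\otimes(A^{1/2}A^{1/2})=A\otimes A$. Thus $A^{1/2}\otimes A^{1/2}$ is \emph{a} square root of $A\otimes A$. To upgrade this to \emph{the} principal (positive definite) square root, I would verify that $A^{1/2}\otimes A^{1/2}$ is itself symmetric positive definite: symmetry follows from the transpose rule $(M\otimes N)^\top=M^\top\otimes N^\top$ applied to the symmetric factor $A^{1/2}$, and positivity follows because the eigenvalues of a Kronecker product are the pairwise products of the eigenvalues of its factors, hence products of positive numbers. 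Since a positive definite matrix admits a \emph{unique} positive definite square root, we conclude $(A\otimes A)^{1/2}=A^{1/2}\otimes A^{1/2}$.

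The only point requiring care is the uniqueness step in (1): one must confirm that the candidate $A^{1/2}\otimes A^{1/2}$ actually belongs to the class where the square root is unique—i.e. that it is symmetric and positive definite—rather than merely checking that it squares to $A\otimes A$. Both of these reductions rest on the transpose and eigenvalue-product rules for Kronecker products and are routine, so no genuine obstacle arises; the verification of (2) is similarly mechanical once the blockwise bookkeeping is in place.
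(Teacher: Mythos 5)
Your proof is correct; note that the paper itself offers no proof of this lemma, simply invoking both identities as well-known facts of Kronecker calculus, and your argument (blockwise verification of the mixed-product rule, then uniqueness of the positive definite square root for part (1), including the necessary check that $A^{1/2}\otimes A^{1/2}$ is symmetric and positive definite) is precisely the standard argument the paper leaves implicit. One harmless slip: your prose says ``setting $B=A$, $C=A^{1/2}$, $D=A^{1/2}$,'' whereas the displayed computation actually instantiates all four factors of the mixed-product rule as $A^{1/2}$ --- the displayed equation is the correct one.
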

\noindent Furthermore, recall the Ando-Lieb theorem~\citep{ando}:
\begin{theorem}[Ando-Lieb]
Let $A,B \in \mathbb{P}^d$. Then the map $(A,B) \mapsto A^\gamma \otimes B^{1-\gamma}$ is jointly concave for $0 < \gamma < 1$.
\end{theorem}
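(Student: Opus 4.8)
The plan is to derive the Ando–Lieb theorem from the joint concavity of the \emph{weighted operator geometric mean}, exploiting that the tensor factors $A\otimes I$ and $I\otimes B$ commute. Recall that for positive definite $X,Y$ and $s\in(0,1)$ the weighted geometric mean is $X\#_s Y := X^{1/2}(X^{-1/2}YX^{-1/2})^{s}X^{1/2}$, and that when $X$ and $Y$ commute it collapses to $X^{1-s}Y^{s}$. Setting $s=1-\gamma$, $X=A\otimes I$, $Y=I\otimes B$ — which commute since $(A\otimes I)(I\otimes B)=A\otimes B=(I\otimes B)(A\otimes I)$ — and using functional calculus on $A\otimes I$ together with Lemma~\ref{lem:prop-kronecker}, we get
\[
(A\otimes I)\#_{1-\gamma}(I\otimes B)=(A\otimes I)^{\gamma}(I\otimes B)^{1-\gamma}=A^{\gamma}\otimes B^{1-\gamma}.
\]
Since $A\mapsto A\otimes I$ and $B\mapsto I\otimes B$ are \emph{linear}, joint concavity of $(A,B)\mapsto A^{\gamma}\otimes B^{1-\gamma}$ follows by restricting the jointly concave map $(X,Y)\mapsto X\#_{1-\gamma}Y$ to this linear image of input pairs. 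Thus it suffices to prove that $(X,Y)\mapsto X\#_s Y$ is jointly concave in the Löwner order for every $s\in(0,1)$.

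For the symmetric weight $s=\tfrac12$ I would use the Schur-complement (block-matrix) characterization
\[
X\#Y=\max\Bigl\{\,W=W^{*}:\ \begin{pmatrix}X & W\\ W & Y\end{pmatrix}\succeq 0\,\Bigr\},
\]
the maximum taken in the Löwner order and attained. Joint concavity is then immediate: if $W_i=X_i\#Y_i$, each block matrix built from $(X_i,Y_i,W_i)$ is positive semidefinite, hence so is any convex combination, and the combined block matrix certifies that $(1-t)W_0+tW_1$ is feasible for the averaged pair; therefore it is dominated by that pair's geometric mean. The same characterization yields operator monotonicity of $\#$ in each slot, since enlarging $X$ or $Y$ enlarges the feasible set and hence the maximizer.

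To pass from $s=\tfrac12$ to general $s$, I would first treat dyadic weights by iterating the symmetric mean, using identities such as $X\#(X\#Y)=X\#_{1/4}Y$, which express every dyadic-weight mean as a finite composition of symmetric means. Because $\#$ is simultaneously jointly concave and operator monotone in each argument, composing it (as a monotone, concave outer map) with jointly concave inner maps preserves joint concavity; a continuity argument in $s$ — using that $X\#_s Y$ is continuous in $s$ and that pointwise limits of jointly concave maps are jointly concave — then extends concavity from the dense set of dyadic weights to all $s\in(0,1)$.

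The main obstacle I anticipate is the symmetric case together with the dyadic extension: proving that the Schur-complement maximum equals exactly $X\#Y$ (both feasibility of $W=X\#Y$ and its maximality), and then justifying carefully that composing the monotone, jointly concave symmetric mean with concave inner maps preserves joint concavity and that this property survives the limit to irrational $s$. Once the symmetric mean's concavity and monotonicity are in hand, the tensor reduction at the top is purely formal, and what remains is the dyadic bookkeeping plus the continuity passage.
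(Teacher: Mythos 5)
The paper does not actually prove this theorem: it is imported verbatim from \citet{ando}, so there is no in-paper argument to compare against. Judged on its own terms, your plan is correct, and it is in fact essentially the classical route of Ando and of Kubo--Ando operator-mean theory: reduce the tensor map to the weighted geometric mean of the commuting operators $A\otimes I$ and $I\otimes B$ (the identity $(A\otimes I)^{\gamma}(I\otimes B)^{1-\gamma}=A^{\gamma}\otimes B^{1-\gamma}$ is immediate from functional calculus and the mixed-product rule, and precomposition with the linear maps $A\mapsto A\otimes I$, $B\mapsto I\otimes B$ preserves joint concavity), then establish joint concavity of $\#_s$ from the Pusz--Woronowicz/Ando block-matrix characterization of the symmetric mean. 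Your concavity argument from the characterization is exactly right: averaging feasible block matrices certifies feasibility of the averaged $W$, and monotonicity in each slot comes for free from the growing feasible set. Two details in your sketch deserve flagging, both standard and fixable. First, the identity $X\#(X\#Y)=X\#_{1/4}Y$ alone does not generate all dyadic weights; you need the two-point version $(X\#_sY)\#(X\#_tY)=X\#_{(s+t)/2}Y$ (equivalently, that $u\mapsto X\#_uY$ is the affinely parameterized geodesic and $\#$ its midpoint), together with the endpoints $s=0,1$ being linear, to run the induction over dyadic levels. Second, the maximality half of the block characterization requires operator monotonicity of the square root (from $Y\succeq WX^{-1}W$ one passes to $(X^{-1/2}YX^{-1/2})^{1/2}\succeq X^{-1/2}WX^{-1/2}$), which you should cite or prove. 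With those two points supplied, the dyadic bookkeeping and the pointwise-limit passage to irrational $s$ (concavity being a family of scalar inequalities stable under limits) close the argument. An alternative, non-equivalent route for comparison is Lieb's original complex-analytic proof (or Epstein's Herglotz-function method), which proves the concavity directly without operator means; your mean-based derivation is more elementary in matrix-analytic terms and matches the source the paper cites.
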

\noindent Equipped with those two arguments, we can prove the lemma.\\
\begin{proof}(Lemma~\ref{lem:mvn-convex})
First, note that 
\begin{align*}
\trace (A_i \otimes A_i) &= (\trace A_i) (\trace A_i) = (\trace A_i)^2 \qquad \forall \; i=1, \dots n \\
\trace (X \otimes X) &= (\trace X) (\trace X) = (\trace X)^2 \; .
\end{align*}
Next, consider the third term. We have 
\begin{align*}
\trace \left[ \left(	(A_i \otimes A_i)^{1/2} (X \otimes X)	(A_i \otimes A_i)^{1/2}	\right)^{1/2} \right]
&\overset{(1)}{=} \trace \left[ \left(	(A_i^{1/2} X \otimes A_i^{1/2} X)	(A_i \otimes A_i)^{1/2}	\right)^{1/2} \right] \\
&\overset{(1)}{=} \trace \left[ \left(	(A_i^{1/2} X A_i^{1/2}) \otimes (A_i^{1/2} X A_i^{1/2})	\right)^{1/2} \right] \\
&\overset{(2)}{=} \trace \left[ \left(A_i^{1/2} X A_i^{1/2}\right)^{1/2} \otimes \left( A_i^{1/2} X A_i^{1/2}	\right)^{1/2} \right] \; ,
\end{align*}
where (1) follows from Lemma~\ref{lem:prop-kronecker}(ii) and (2) from Lemma~\ref{lem:prop-kronecker}(i). Note that $X \mapsto A^{1/2} X A^{1/2}$ is a linear map. Therefore, we can now apply the Ando-Lieb theorem with $\gamma=\frac{1}{2}$, which establishes the concavity of the trace term. Its negative is convex and consequently, the objective is a sum of convex functions. The claim follows from the convexity of sums of convex functions.
\end{proof}

One can show that the Wasserstein mean is upper bounded by the arithmetic mean $A$ and lower bounded by $\alpha I$, where $\alpha$ denotes the smallest eigenvalue over $\mathcal{C}$~\citep{bhatia,bhatia2}. This allows for computing the Wasserstein mean via constrained optimization (though, again, one could use unconstrained tools too).
For computing the gradient, note that the Riemannian gradient $\grad \phi(X)$ can be written as
$\grad \phi(X) = X \nabla \phi(X) - \nabla \phi(X) X$, 
where $\nabla \phi$ is the Euclidean gradient (where $\phi$ denotes the objective in~\eqref{eq:obj-wm}). It is easy to show, that
\begin{align*}
\nabla \phi(X) = \sum_i w_i \left( I - \left( C_i X	\right)^{-1/2} C_i	\right) \; ,
\end{align*}
which directly gives the gradient of the objective.

We evaluate the performance of our stochastic \rfw methods against the deterministic \rfw method for different initializations (Fig.~\ref{fig:srfw-wm}). Our results indicate that all three initializations are suitable. This suggests, that (stochastic) \rfw is not sensitive to initialization and performs well even if not initialized close to the optimum. In a second experiment, we compute Wasserstein barycenters of MVNs for different input sizes. Both experiments indicates that especially the purely stochastic \srfw improves on \rfw with comparable accuracy and stability. We did not compare against projection-based methods in the case of Wasserstein barycenters, since to our knowledge there are no implementations with the appropriate projections available. 

\section{Discussion}
We introduced three stochastic Riemannian Frank-Wolfe methods, which go well-beyond the deterministic \rfw algorithm proposed in~\citep{weber-sra}. In particular, we (i) allow for an application to nonconvex, stochastic problems; and (ii) improve the oracle complexities by replacing the computation of full gradients with stochastic gradient estimates. For the latter task, we analyze both fully stochastic and semi-stochastic variance-reduced estimators. Moreover, we implement the recently proposed \spider technique that significantly improves the classical Robbins-Monroe and variance-reduced gradient estimates by circumventing the need to recompute full gradients periodically.

We discuss applications of our methods to the computation of the Riemannian centroid and Wasserstein barycenters, both fundamental subroutines of potential value in several applications, including in machine learning. In validation experiments, we observe performance gains compared to the deterministic \rfw as well as state-of-the-art deterministic and stochastic Riemannian methods.

This paper focused on developing a non-asymptotic convergence analysis and on establishing theoretical guarantees for our methods. Future work includes implementation of our algorithms for other manifolds and other classical Riemannian optimization tasks (see, e.g.,~\citep{absil_review}). This includes tasks with constraints on determinants or condition numbers. An important example for the latter is the task of learning a DPP kernel (see, e.g.,~\citep{mariet15}), which can be formulated as a stochastic, geodesically convex problem. We hope to explore practical applications of our approach to large-scale constrained problems in machine learning and statistics. 

Furthermore, instead of using exponential maps, one can reformulate our proposed methods using retractions. For projected-gradient methods, the practicality of retraction-based approaches has been established~\citep{absil_book}, rendering this a promising extension for future research.

\bibliographystyle{plainnat}
\bibliography{ref}

\end{document}